\newtheorem{thm}{Theorem}[section]
\newtheorem{prop}[thm]{Proposition}
\newtheorem{lem}[thm]{Lemma}
\newtheorem{claim}[thm]{Claim}
\newtheorem{prob}{Problem}
\theoremstyle{definition}
\newtheorem{defn}[thm]{Definition}
\theoremstyle{remark}
\newtheorem{rem}[thm]{Remark}
\pgfplotsset{compat=1.15}
\pgfplotsset{compat=newest}
\numberwithin{equation}{section}
\newcommand{\R}{\mathbb{R}}
\newcommand{\N}{\mathbb{N}}
\newcommand{\al}{\alpha}
\newcommand{\be}{\beta}
\newcommand{\e}{\eta}
\newcommand{\eps}{\varepsilon}
\newcommand{\I}{\mathrm{Id}}
\newcommand{\pa}{\partial}
\newcommand{\la}{\lambda}
\newcommand{\U}{\mathcal{U}}
\newcommand{\tu}{\Tilde{u}}
\newcommand{\EL}{\mathcal{L}}
\newcommand{\dla}{\dot{\lambda}}
\newcommand{\vh}{\Vec{h}}
\newcommand{\cL}{\mathcal{L}}
\newcommand{\pax}{\partial_x}
\newcommand{\pay}{\partial_y}
\newcommand{\paz}{\partial_z}
\newcommand{\sign}{\operatorname{sign}}
\title{Singular extremals of optimal control problems with $L^1$ cost}
\author{Andrei Agrachev\thanks{SISSA, Trieste, Italy}, Ivan Beschastnyi\thanks{Centre Inria d'Université Côte d'Azur, Sophia-Antipolis, France}, Michele Motta \thanks{SISSA, Trieste, Italy}}
\begin{document}

\maketitle

\begin{abstract}
    We study the optimal control problem for a control-affine system, where we want to minimize the $L^1$ norm of the control. 
    First, we show how Pontryagin Maximum Principle (PMP) applies to this problem and we divide the extremal trajectories into two categories: regular and singular extremals.
    Then, we obtain a strong generalized Legendre-Clebsch condition for singular extremals and we show that this condition together with the absence of conjugate points is sufficient to ensure local strong optimality. 
    We provide also some geometric examples where we apply our results.
    Finally, we prove that generalized Legendre-Clebsch condition is necessary for optimality.
\end{abstract}

\tableofcontents

\section{Introduction}
\label{sec:intro}

In this paper, we study with the following optimal control problem. 
Let $M$ be a smooth manifold of dimension $\mathrm{dim} M=n$ and take 
$ f_0 , f_1 , \dots , f_m : M \to TM $, 
smooth vector fields. 
Define also $U$ to be the radius one closed Euclidean ball, that is
$ U = \{u\in \R^m \mid |u| \leq 1\} $. 
We take as set of admissible controls
\begin{equation}
    \U
    =
    L^\infty ([0,T], U)
    =
    \big \{
        u : [0,T]\to U \: | \: u \text { measurable}
    \big \}.
\end{equation}

\begin{prob}[Optimal control problem (OCP)]
\label{prob:OCP}
    Consider the control-affine system: 
    \begin{align}
        \label{MainEq}
        \dot{q}
        &=
        f_0(q)
        +
        \sum_{j=1}^m
        u_j (t)
        f_j(q),
    \end{align}
    where $q\in M$ and 
    $
    u
    =
    (u_1,\dots,u_m)
    \in 
    \U
    $
    is the control acting on the system.
    Given any two points on the manifold $q_0,q_T\in M$,
    we want to find 
    $
    \tu \in \U
    $ 
    such that the corresponding solution $q$ to \eqref{MainEq} satisfies 
    \begin{equation}
        \label{eq:bound-cond}
        q(0)=q_0, \quad q(T)=q_T,
    \end{equation}
    and the $L^1$ norm of $\tu$ is minimal: 
    \begin{equation}
        \label{eq:costoL1}
        J(u)
        =
        \int _0 ^T
        |u(t)|
        dt
        \to \min.
    \end{equation}
\end{prob}
We will denote by $q_u(t; q_0)$ the solution of \eqref{MainEq} with control $u$ and initial point $q_0$ evaluated at time $t\in[0,T]$. 
\par 
This optimal control problem is relevant for many applications. Indeed, the $L^1$ norm is a natural choice for every model in which the optimal trajectories are expected to have some \emph{inactivated arcs}, that is arcs along which the control is identically switched off. For instance, this is the case when you want to minimize the fuel consumption of a vehicle, see \cite{CaiChit},\cite{Teof} and references therein, which has important applications in aerospace engineering.

Another circumstance where the minimization of the $L^1$ arises is the motion of biological systems. 
In \cite{GauJeanBerr08},\cite{GauJeanBerr10}, the movements in animals and human beings were studied and the authors found out that such trajectories have inactivated arcs. 
They found that these kind of movements minimize the ``absolute work", which can be modelled using the $L^1$ norm of the control.

The aim of the present paper is to provide effective general tools to study this kind of problems. As a first step we apply the Pontryagin Maximum Principle (PMP) to describe the extremals of this problems. As we are going to show in Section \ref{sec:app-PMP}, according to PMP, any optimal solution of Problem \ref{prob:OCP} is the concatenation of one of the following three types of controlled arcs:
\begin{itemize}
    \item arcs with controls taking values values at the boundary of $U$, i.e. $|u|=1$;
    \item inactivated controls, $u=0$;
    \item \emph{singular} controls, taking values in the interior of $U$, that is $|u|<1$. 
\end{itemize}
We will call the first two kind of controls \emph{regular}, in opposition to the singular ones. For such extremal trajectories many efficient necessary or sufficient optimality conditions have been developed over the years\cite{AgSteZe02, PogSte04,ABDL,MauOsmo,PicSus}.

In this paper, we focus entirely on controls which are purely singular. 
We prove a generalized Legendre-Clebsch conditions. 
For the $L^1$-problem the Legendre form has a non-trivial time-dependent kernel, making it somewhere in-between the usual regular trajectories, when the Legendre form is non-degenerate, and classical singular trajectories, when the Legendre form vanishes completely. 
We determine the second variation that allows us to prove the Legendre condition for this type of extremals and reformulate it in a ready-to-use form.

To formulate this result in a clean way, let us introduce the hamiltonian functions
\begin{equation}
    \label{eq:def_basis_hamiltonians}
        h_i(\lambda) = \langle \lambda, f_i \rangle, 
        \quad
        \lambda \in T^*M, 
        \;
        i=0,1,\dots,m,
\end{equation}
where $f_i$ are the vector fields appearing in Equation \eqref{MainEq}.
The angled bracket $\langle \cdot \, , \cdot\rangle$ stand for the duality pairing between tangent and cotangent spaces. 
Moreover, we denote by $\{\cdot \,,\cdot\}$ the Poisson bracket between functions $f,g\in C^\infty(T^*M)$ (see Sections 4.1 and 4.2 in \cite{AgBaBo} for an introduction to the subject).

Now we can state one of the main results of the paper.
\begin{thm}[Necessary optimality condition]
\label{thm:nec-cond}
    Let $u\in\U$ be a singular control which is also an optimal solution of Problem \ref{prob:OCP}. Let $\la(\cdot)$ be its corresponding extremal solving the Hamiltonian system \eqref{eq:HamSystPMP} of the PMP. Define $h_c=\sum_{i=1} ^m h_i^2$. Then 
    \begin{equation}
    \label{eq:GLC}
        \tag{GLC}
        \{h_c,\{h_0,h_c\}\} \big(\la(t) \big) \geq 0, \quad \text{for } \,t\in[0,T].
    \end{equation}
\end{thm}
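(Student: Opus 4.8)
The plan is to deduce \eqref{eq:GLC} from the non-negativity of the second variation along the singular extremal, through a Goh--Kelley reduction to the one direction in which the Legendre form of the problem degenerates.

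\emph{Step 1 (first-order consequences of PMP).} I would first record, from the analysis in Section~\ref{sec:app-PMP}, the structure of a singular arc. Writing $h=(h_1,\dots,h_m)$, the PMP Hamiltonian is $H_u(\la)=h_0(\la)+\langle u,h(\la)\rangle-\nu|u|$ with multiplier $\nu\in\{0,1\}$, and pointwise maximization over $|u|\le1$ forces on a singular arc that the control be radial, $u(t)=|u(t)|\,h(\la(t))/|h(\la(t))|$, together with $h_c(\la(t))\equiv\nu^2$. If $\nu=0$ (abnormal) then $h(\la(t))\equiv0$ along the arc, and since every term in the expansion of $\{h_0,h_c\}$ and of $\{h_c,\{h_0,h_c\}\}$ in the $h_i$'s carries a factor $h_i$, both vanish identically and \eqref{eq:GLC} is trivial. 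So assume $\nu=1$, i.e. $h_c(\la(t))\equiv1$; discarding inactivated sub-arcs we may also assume $0<|u(t)|<1$, bounded away from $0$ and $1$ on any compact subinterval. Differentiating $h_c(\la(t))\equiv1$ along the Hamiltonian flow gives $0=\{h_0,h_c\}(\la(t))+\sum_j u_j(t)\{h_j,h_c\}(\la(t))$; expanding $\{h_j,h_c\}=2\sum_k h_k\{h_j,h_k\}$ and using that $u\parallel h$ while $\{h_j,h_k\}=\langle\la,[f_j,f_k]\rangle$ is antisymmetric, the control term vanishes, so $\{h_0,h_c\}(\la(t))\equiv0$ on the singular arc. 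Differentiating once more, with the identity $\sum_j h_j\{h_j,g\}=\tfrac12\{h_c,g\}$, one obtains $\{h_0,\{h_0,h_c\}\}(\la(t))=-\tfrac12|u(t)|\{h_c,\{h_0,h_c\}\}(\la(t))$; thus $\{h_c,\{h_0,h_c\}\}$ is exactly the coefficient with which the control magnitude enters the second time-derivative of the ``switching function'' $\sqrt{h_c}-1$.

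\emph{Step 2 (the second variation and its degenerate direction).} Optimality makes the second variation $J''$ of the problem along the reference extremal — the same quadratic form appearing in the sufficiency results of the paper — non-negative on the critical cone $\mathcal K$ of control variations $v$ annihilating the first variation of the endpoint (and of the cost), a subspace of finite codimension. The crucial structural fact is that the Hessian in $u$ of the integrand $|u|$ equals $|u|^{-1}\bigl(\mathrm{Id}-hh^{\top}/|h|^2\bigr)$: it is positive definite transverse to $h(\la(t))$ and has one-dimensional kernel $\R\,h(\la(t))$. Hence $J''$ has Legendre form $P(t)=|u(t)|^{-1}\bigl(\mathrm{Id}-h(\la(t))h(\la(t))^{\top}\bigr)\ge0$, with the transverse components $v_\perp$ entering through the coercive zeroth-order term $\int|u|^{-1}|v_\perp|^2$, while the radial component $v_\parallel=\langle v,h/|h|\rangle$ enters $J''$ only through the dynamics. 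Performing the Goh substitution $w=\int_0^{\cdot}v_\parallel$ and integrating by parts — the boundary terms at the ends of the arc and the lower-order and cross terms dropping out precisely because of the relations of Step~1 — the contribution of the degenerate direction reduces, to leading order, to $\int_0^T R(t)\,w(t)^2\,dt$ with $R(t)=c\,\{h_c,\{h_0,h_c\}\}(\la(t))$ for a universal constant $c>0$.

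\emph{Step 3 (pointwise extraction).} It then remains to apply the standard strengthened Legendre-type necessary condition for quadratic functionals on a finite-codimension subspace: non-negativity of $J''|_{\mathcal K}$ forces $P(t)\ge0$ a.e.\ (automatic here) and, on the kernel $\R\,h(\la(t))$ of $P$, forces the next-order coefficient to be non-negative a.e.; the finitely many linear constraints defining $\mathcal K$ cannot affect this a.e.\ statement — this is the same localization that shows conjugate points are irrelevant to the Legendre condition. Hence $R(t)\ge0$, i.e. $\{h_c,\{h_0,h_c\}\}(\la(t))\ge0$ a.e.\ on the singular arc, and by continuity of $\la(\cdot)$ and of the bracket this holds on all of $[0,T]$, which is \eqref{eq:GLC}.

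\emph{Main obstacle.} The heart of the matter is Step~2: recognizing that the Legendre form degenerates exactly along $h$, and executing the Goh integration by parts so that (i) all cross terms between the radial and the transverse components, as well as all boundary terms at the endpoints of the singular arc, cancel by the Step~1 identities, and (ii) the surviving zeroth-order coefficient is identified as $c\,\{h_c,\{h_0,h_c\}\}$ with a \emph{definite} sign $c>0$. This is exactly the ``in-between'' regime flagged in the introduction — the Legendre form is neither non-degenerate (regular extremals) nor identically zero (classically singular extremals), but has a nontrivial, time-dependent kernel — and keeping the bookkeeping of orders and of signs correct across that kernel is where essentially all the work is concentrated.
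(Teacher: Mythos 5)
Your overall strategy --- restrict the second variation to the critical cone, identify the degenerate direction of the Legendre form as the radial one, perform a Goh transformation there, and then extract a pointwise inequality via the standard localization argument --- is exactly the route the paper takes (Section~\ref{sec:sec-var}, via Propositions~\ref{prop:segno-variazione-seconda-e-forma-di-Legendre} and~\ref{prop:equivalence_legendre}). Step~1 and Step~3 are fine. But there is a concrete gap in Step~2, and it affects the final identification of the coefficient.

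You assert that, after the Goh substitution $\phi=\int_0^{\cdot}v_\parallel$, ``the lower-order and cross terms drop out precisely because of the relations of Step~1,'' and that the degenerate direction contributes $\int R(t)\phi^2\,dt$ with $R=c\,h_{c0c}$. The cross terms between the transverse component $w=v_\perp$ and the Goh variable $\phi$ do \emph{not} drop out. After integration by parts, the leading (Legendre) form is the full $(m+1)\times(m+1)$ matrix in~\eqref{eq:def-lt},
\begin{equation}
	l_t=\begin{pmatrix}\tfrac{\I}{r} & \sigma(Z_t\cdot,Z_I(t))\\[2pt] \sigma(Z_t\cdot,Z_I(t))^{T} & \sigma(Z_I(t),\dot Z_I(t))\end{pmatrix},
\end{equation}
and the off-diagonal block $\sigma(Z_t w,Z_I(t))=\sum_{i,j}w_i h_j h_{ij}$ is precisely $\langle w,h_{cI}\rangle$, which is not killed by $\langle h_I,w\rangle=0$, by $h_{0c}=0$, nor by antisymmetry of $h_{ij}$ (take $m=2$, $h_I=(1,0)$, $w=(0,1)$: the cross term is $h_{21}$). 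The $(\phi,\phi)$-entry by itself is
\begin{equation}
	\sigma(Z_I(t),\dot Z_I(t))=h_{c0c}+r\,|h_{cI}|^2,
\end{equation}
not $c\,h_{c0c}$; see~\eqref{eq:comp-form1}. So if you discard the cross terms, non-negativity of the second variation only yields $h_{c0c}+r|h_{cI}|^2\ge 0$, which is strictly weaker than \eqref{eq:GLC} once $r>0$ and $h_{cI}\neq 0$ --- you would not prove the theorem. What one actually needs is the Schur complement of $l_t$ with respect to the coercive block $\I/r$: writing $a=\sigma(Z_t\cdot,Z_I(t))$, $\alpha=\sigma(Z_I(t),\dot Z_I(t))$, the paper's factorization gives $l_t\ge 0\iff \alpha-r|a|^2\ge 0$, and it is this combination, $\alpha-r|a|^2=h_{c0c}+r|h_{cI}|^2-r|h_{cI}|^2=h_{c0c}$, that recovers the bracket condition. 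Equivalently, the correct completion of your Goh step is the shear $\tilde w=w+r\,a\,\phi$, after which the Legendre form becomes block-diagonal with the $\phi$-block equal to $h_{c0c}$; but then $\tilde w$ depends on $\phi$, which is exactly what your ``cross terms drop out'' claim was meant to avoid. Note that in the scalar case $m=1$ one does have $h_{cI}\equiv 0$, and your shortcut is then literally correct; the gap appears only for $m\ge 2$, which is the generic case of the paper.

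Two small side remarks. First, in Step~1 your claim that in the abnormal case $h_I\equiv 0$ along a singular arc, while plausible, is outside the scope of the theorem: the paper normalizes to $\nu=-1$ and the singular regime is defined by $|h_I|\equiv 1$, so the abnormal case is simply not treated here. Second, watch the sign/normalization of $\nu$ and of $h_c$: the paper uses $\nu\le 0$ with $\nu=-1$ for normal extremals, and two different normalizations of $h_c$ (with and without the factor $\tfrac12$) appear in the text; neither affects the sign of $h_{c0c}$, but they do affect the intermediate constants $c$ you carry around.
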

In the literature, the conditions like the one in \eqref{eq:GLC} are usually called \emph{generalized Legendre-Clebsch conditions}, or sometimes simply generalized Legendre conditions.
The strengthening of this condition gives local strong optimality of short singular arcs.

\begin{defn}[Local strong optimality]
\label{def:str-loc-opt}
We say that an admissible trajectory $q_u(\cdot)$ and its corresponding control $u$ are \emph{locally strongly optimal} if there exist a neighbourhood $\mathcal{O}$ of $q_u(\cdot)$ such that for any other admissible control $v\in\U$ satisfying 
    \begin{equation*}
        q_v(\cdot) \subset \mathcal{O}, \, q_v(0)=q_u(0), \, q_v(T)=q_u(T), 
    \end{equation*}
    we have 
    \begin{equation}
        \label{eq:funzionale-lso}
        J(u) \leq J(v).
    \end{equation}
We say that $q_u(\cdot)$ is locally strongly \emph{strictly} optimal if the inequality in \eqref{eq:funzionale-lso} is strict.
\end{defn}
 Notice that this definition can be equivalently restated saying that $q_u(\cdot)$ is locally strongly optimal if it is a local minimizer among other admissible trajectories, where the topology on the set of the admissible trajectories is the one induced by the uniform topology of $C^0([0,T];M).$

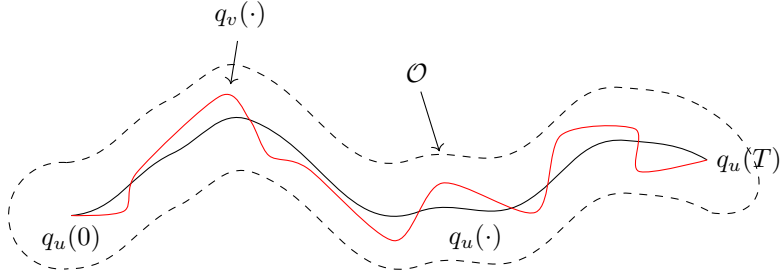
\begin{figure}
    \centering
    \begin{tikzpicture}[rotate=-40]
          \draw (0,0) node[below] {$q_u(0)$}
            to[in=245,out=45] (.5,1.5)
            to[out=65,in=200] (1,2.5)
            to[in=240,out=20] (3.5,3)
            to[out=60,in=250] (4.5,4)
            to[out=70,in=215] (5,5.5)
            to[out=35,in=190] (6,6)
            node[right] {$q_u(T)$};
    
          \foreach \x in {1,-1}
          {
          \draw[dashed] ($(0,0)+(\x*.5,-\x*.5)$)
            to[in=245,out=45] ($(.5,1.5)+(\x*.5,-\x*.5)$)
            to[out=65,in=200] ($(1,2.5)+(\x*.5,-\x*.5)$)
            to[in=240,out=20] ($(3.5,3)+(\x*.5,-\x*.5)$)
            to[out=60,in=250] ($(4.5,4)+(\x*.5,-\x*.5)$)
            to[out=70,in=215] ($(5,5.5)+(\x*.5,-\x*.5)$)
            to[out=35,in=190] ($(6,6)+(\x*.5,-\x*.5)$) ;
          }
          \draw[dashed] (-.5,.5) arc (125:317:.71);
          \draw[dashed] (6.5,5.5) .. controls (7.3,6) and (6.5,6.7) .. (5.5,6.5);
          \node at (2.3,4.4) {$\mathcal{O}$};
          \draw[->] (2.5,4.25) -- (3.2,3.8);
          \node[above] at (4.5,3) {$q_u(\cdot)$};
          \draw[red] plot[smooth] coordinates {(0,0) (.5,.5) (.3,1) (.5,2.5) (1,2.5) (1.5,2.3) (2,2.5) (3.5,2.5) (3.5,3.5) (4.7,4) (4.3,5) (5,5.7) (5.4,5.3) (6,6)};
          \node at (0,3.5) {$q_v(\cdot)$};
          \draw[->] (.2,3.2) -- (.5,2.7);
\end{tikzpicture}
    \caption{Graphical representation of Local Strong Optimality}
    \label{fig:loc-str-opt}
\end{figure}
\begin{thm}[Sufficient conditions, small time]
\label{thm:suff-cond-small-time}
    Let $u \in \U$ be a singular control for Problem \ref{prob:OCP}. Denote by $q(\cdot)$ and $\la(\cdot)$ its corresponding singular trajectory and extremal respectively. If 
    \begin{equation}
        \label{eq:SGLC}
        \tag{SGLC}
        \{h_c,\{h_0,h_c\}\} \big(\la(t) \big)>0, \quad \text{for } \,t\in[0,T],
    \end{equation}
    then for every $t\in [0,T]$ there is some $\tau > 0$ such that $q _{|[t,t+\tau]}$ is strictly locally strongly optimal.    
\end{thm}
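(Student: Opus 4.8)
The plan is to reduce the problem near the reference arc to a classical scalar singular problem, to establish coercivity of the corresponding second variation on short arcs, and then to invoke the Hamiltonian‑methods sufficiency scheme to pass from coercivity to \emph{strong} local optimality.

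\emph{Step 1: angular reduction.} Along the reference arc one has $h_c(\la(t)) = |h(\la(t))|^2 = 1$, where $h = (h_1,\dots,h_m)$ — this is exactly the characterization of singular controls obtained in Section~\ref{sec:app-PMP}, and it shows that $h$ does not vanish near the arc. I would therefore write an admissible control in polar form $u = \rho\,\omega$ with $\rho = |u| \in [0,1]$ and $\omega \in S^{m-1}$, and observe that the maximality condition of the PMP forces $\omega = h/|h|$ wherever $h \neq 0$: the Legendre form of the control Hamiltonian, $\partial^2_{uu} H = -|u|^{-1}\bigl(\I - \hat u\,\hat u^{\!\top}\bigr)$, is negative definite transversally to the radial direction, so the maximization is regular and non‑degenerate in the angular variables, and only the scalar magnitude $\rho$ is genuinely singular. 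Substituting, the problem near the arc becomes a control‑affine problem in the scalar control $\rho \in [0,1]$, with partially maximized Hamiltonian $\widehat H(\la,\rho) = h_0(\la) + \rho\bigl(\sqrt{h_c(\la)} - 1\bigr)$ and switching function $\psi = \sqrt{h_c} - 1$; the reference is a singular extremal of intrinsic order one of this reduced problem ($\psi \equiv 0$ along the arc, then $\dot\psi \equiv 0$ forces $\{h_c,h_0\} \equiv 0$, and $\ddot\psi \equiv 0$ determines the singular feedback $\rho = \rho_s(\la)$, well defined precisely where the left‑hand side of \eqref{eq:GLC} does not vanish). In particular \eqref{eq:SGLC} is exactly the strengthened generalized Legendre–Clebsch condition of this reduced problem, and $H_s(\la) := \widehat H(\la,\rho_s(\la))$ coincides with $h_0$ on the singular surface $\Sigma = \{\psi = 0,\ \{h_c,h_0\} = 0\}$.

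\emph{Step 2: second variation and Goh transformation.} I would then compute the second variation $Q_\tau$ of $J$ at $u$ on the subspace of endpoint‑preserving variations of the reduced problem. Its Legendre term $\int \Lambda(t)\,\delta\rho(t)^2\,dt$ vanishes identically, $\Lambda = -\partial^2_{\rho\rho}\widehat H \equiv 0$, which is the precise sense in which this extremal is ``in between'' a regular and a classically singular one (equivalently, in the original variables the Legendre form $|u|^{-1}(\I - \hat u\hat u^{\!\top})$ is positive semidefinite with a one‑dimensional, time‑dependent kernel, namely the radial direction). To recover positivity one performs a Goh‑type substitution: since $\rho$ enters the dynamics linearly, set $\delta\xi(t) = \int_{t_0}^{t}\delta\rho(s)\,ds$ and integrate by parts; the quadratic form then acquires a genuine Legendre term $\int c(t)\,\delta\xi(t)^2\,dt$ whose coefficient $c(t)$ is a positive multiple of $\{h_c,\{h_0,h_c\}\}(\la(t))$, plus lower‑order terms (in $\delta\xi$, its derivative, and the Jacobi fields of the reduced flow along $\la(\cdot)$) and boundary terms. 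Under \eqref{eq:SGLC}, $c(t) \ge c_0 > 0$ on $[t,t+\tau]$, so the transformed Legendre form is uniformly positive definite.

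\emph{Step 3: short arcs and conclusion.} For $\tau$ small a time‑rescaling argument shows that the positive term $\int c(t)\,\delta\xi^2\,dt$ dominates all the lower‑order and boundary contributions to $Q_\tau$, so $Q_\tau$ is coercive on the space of admissible variations of $q_{|[t,t+\tau]}$; equivalently, the short singular arc carries no conjugate point. One then concludes by the sufficient optimality condition of the paper: coercivity of the (transformed) second variation together with the absence of conjugate points on $[t,t+\tau]$ implies that $q_{|[t,t+\tau]}$ is strictly locally strongly optimal. The upgrade from a coercive second variation to \emph{strong} optimality is the Hamiltonian‑methods / field‑of‑extremals part: one builds, over a tubular neighbourhood $\mathcal{O}$ of $q_{|[t,t+\tau]}$, a calibrating structure attached to a Lagrangian manifold invariant under the reduced singular flow $\vec H_s$ (its projectability onto $\mathcal{O}$ being equivalent to the absence of conjugate points just established), and then, for any admissible competitor $v$ with $q_v \subset \mathcal{O}$ and the same endpoints, a Weierstrass‑type inequality coming from PMP maximality yields $J(v) \ge J(u)$, strictly when $v \neq u$ by the strict sign in \eqref{eq:SGLC}. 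Since \eqref{eq:SGLC} is a compact condition in $t$, a suitable $\tau > 0$ can be chosen for every $t \in [0,T]$.

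\emph{Main obstacle.} The delicate point is precisely the passage from coercivity of the second variation to \emph{strong} (not merely weak) local optimality. Admissible competitors need not be $L^\infty$‑close to $u$ — they may contain inactivated arcs ($v \equiv 0$) or boundary arcs ($|v| \equiv 1$) — so there is no naive Taylor expansion of the cost to fall back on; moreover the true maximized Hamiltonian $h_0(\la) + \max\{0,\,|h(\la)|-1\}$ is only Lipschitz, with a corner along the singular surface $\{h_c = 1\}$ through which the reference extremal runs. This corner is the geometric shadow of the mixed regular/singular nature of the arc, and it prevents a direct use of the standard smooth projectable‑Lagrangian calibration, forcing a more careful construction valid only for short arcs. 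Checking that the Goh substitution and the induced modification of the space of variations preserve coercivity and create no spurious conjugate point — which is exactly where positivity of $\{h_c,\{h_0,h_c\}\}$ enters — is the other place where care is required.
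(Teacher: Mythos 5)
The outline you give is reasonable as a map of the territory, but it does not close the one place where the theorem actually has to be proven. The paper's proof of this statement is \emph{not} a ``coercivity of the second variation $\Rightarrow$ strong optimality'' argument: it never touches the second variation. It directly builds a smooth super-Hamiltonian $H_S$ and a Lagrangian calibration and then invokes the Stefani--Zezza criterion (Theorem~\ref{thm:CritSuffCond}). Your Step~3 effectively says ``now invoke the sufficient condition of the paper,'' but that sufficient condition is Theorem~\ref{thm:suff-cond-no-conj-points}, whose own proof relies on the super-Hamiltonian constructed in the proof of Theorem~\ref{thm:suff-cond-small-time}; so as a route through the paper this is circular, and as an independent argument it is incomplete.

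Your ``Main obstacle'' paragraph is the diagnosis, not the cure. You correctly note that admissible competitors need not be $L^\infty$-close (inactivated or boundary arcs), that the true maximized Hamiltonian $h_0 + \max\{0,|h_I|-1\}$ is only Lipschitz, with a corner along $\{|h_I|=1\}$, and that this blocks the naive projectable-Lagrangian calibration. But you then write ``forcing a more careful construction valid only for short arcs'' and stop. That construction \emph{is} the proof. What is missing is: (i) the choice of a Lagrangian manifold $\mathcal L_0 = \{(q,d_qa)\}$ lying in $\Sigma = \{|h_I|=1\}$ through $\la(0)$, obtained by solving the eikonal-type equation $\sum_i |\langle d_qa,f_i\rangle|^2 = 1$ by characteristics; (ii) the construction of a smooth $h_S$ on a neighbourhood with $\{h_S,h_c\}=0$ and $h_S|_{\mathcal S}=h_0$, using that $\vec h_c$ preserves $\Sigma$ and, by \eqref{eq:SGLC}, is transversal to $\mathcal S=\{h_{0c}=0\}$; (iii) the calibration inequality $h_S\geq H_M$ on $\Sigma$ near $\la(0)$, which comes from the second-order Taylor expansion $h_S(e^{\tau\vec h_c}\ell)-h_0(e^{\tau\vec h_c}\ell)=\tfrac{\tau^2}{2}h_{c0c}(\ell)+o(\tau^2)$ — this is precisely where the strict inequality in \eqref{eq:SGLC} enters, and it is the only place it enters. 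Your Goh-transform step 2 is a reasonable parallel story (the paper develops it in Section~\ref{sec:sec-var}), but it lands you in weak $L^2$-optimality, and you still need the super-Hamiltonian to get from weak to strong. So the proposal identifies the correct geometric objects and names the right difficulty, but leaves the decisive construction — where SGLC does the work — unproven.
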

Conditions like \eqref{eq:SGLC} are known in the literature as \emph{strong} generalized Legendre conditions, where the word ``strong" emphasizes that the inequality here is strict, while in \eqref{eq:GLC} the equal sign is allowed. 
To prove this theorem, we use the method of overmaximized (or super-) Hamiltonians (see \cite{StefZezz17} and references therein for other use cases of this method). 

In order to complete the local optimality study, we need to quantify the time $\tau$ in Theorem \ref{thm:suff-cond-small-time}. 
This is done via the theory of conjugate points (see Definition \ref{def:conjugate_point_final_condition_on_the_fibre}). 
They are usually defined through solutions of the Jacobi equation, which is a linear non-autonomous ODE associated to a given extremal. 
We give an explicit form of the Jacobi equation for singular extremals in \eqref{eq:Jacobi1-rev} of Section~\ref{sec:suff-cond-optim}. 
To explain the connection between conjugate points and optimality of a control function, we have to introduce the concept of corank of an extremal trajectory.
\begin{defn}[Corank of an extremal]
    Let $q(\cdot)$ be an admissible trajectory for system \eqref{MainEq} on $M$ satisfying all necessary conditions prescribed by PMP (see Theorem \ref{thm:PMP}).
    The corank of $q(\cdot)$ is the maximum number of linearly independent curves $\la(\cdot) \subset T^*M$ which are solution of \eqref{eq:HamSystPMP} and $q(\cdot)=\pi(\la(\cdot))$, where $\pi$ is the canonical bundle projection.
\end{defn}
\begin{thm}[Necessary condition, conjugate points]
    \label{thm:nec-cond-conj-points}
    Let $\la(\cdot)$ be a singular extremal for Problem \ref{prob:OCP} satisfying condition \eqref{eq:SGLC} and let $q(\cdot)$ be its corresponding trajectory on $M$. 
    Suppose that $q(\cdot)$ has corank $r$. 
    Moreover, assume that there are $N$ isolated conjugate points counted with multiplicity along $q(\cdot)$, for $t\in(0,T)$. 
    If $N \geq r$, then $q(\cdot)$ is not solution of Problem \ref{prob:OCP}.
\end{thm}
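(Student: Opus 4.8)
The plan is to deduce non-optimality from a Morse-type index theorem for the second variation of the cost along the singular extremal. Concretely, I would work with the intrinsic second variation $\mathcal{Q}_t$ of $J$ along the restriction of $q(\cdot)$ to $[0,t]$ and the associated Jacobi equation \eqref{eq:Jacobi1-rev} of Section~\ref{sec:suff-cond-optim}; recall (Definition~\ref{def:conjugate_point_final_condition_on_the_fibre}) that a conjugate point at time $t_*\in(0,T)$ is a time at which nontrivial solutions of the Jacobi equation obeying the prescribed initial condition at $0$ land in the vertical subspace $\Pi_{t_*}$ of $T^*M$, the multiplicity of $t_*$ being the dimension of that solution space. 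The first point is that \eqref{eq:SGLC} turns $\mathcal{Q}_t$ into a Legendre (Fredholm) quadratic form: the strict generalized Legendre-Clebsch inequality makes the principal part of $\mathcal{Q}_t$ positive definite on a complement of the time-dependent kernel of its Legendre form, so $\mathcal{Q}_t$ is a compact perturbation of a positive-definite form. Hence $\mathrm{ind}_-\mathcal{Q}_t$ is finite for each $t$, the conjugate points are isolated, and $t\mapsto\mathrm{ind}_-\mathcal{Q}_t$ is integer-valued and piecewise constant.

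Next I would prove the index count: $\mathrm{ind}_-\mathcal{Q}_t=0$ for small $t>0$ — this is exactly the positivity already used in the proof of Theorem~\ref{thm:suff-cond-small-time} — and, as $t$ crosses a conjugate point $t_*$ of multiplicity $k$, $\mathrm{ind}_-\mathcal{Q}_t$ increases by precisely $k$. The monotone jump is the heart of the matter: differentiating $\mathcal{Q}_t$ in $t$ yields a sign-definite quadratic form (again by \eqref{eq:SGLC}), so the curve $t\mapsto\Lambda(t)$ of Lagrangian subspaces obtained by transporting the initial subspace along the Jacobi flow is monotone in the Lagrange Grassmannian; the increment of the index across $t_*$ then equals $\dim\bigl(\Lambda(t_*)\cap\Pi_{t_*}\bigr)$, the multiplicity of the conjugate point. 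Summing over the $N$ conjugate points in $(0,T)$ gives $\mathrm{ind}_-\mathcal{Q}_T\ge N$.

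Finally I would tie $\mathrm{ind}_-\mathcal{Q}_T$ to optimality via the corank. If $q(\cdot)$ solved Problem~\ref{prob:OCP} on $[0,T]$, the Hessian of $J$ restricted to the constraint $\{q_u(T;q_0)=q_T\}$ would obey a second-order necessary condition; since $q(\cdot)$ has corank $r$ — equivalently the endpoint map $u\mapsto q_u(T;q_0)$ drops rank by $r-1$ along the (normal) lift — this constrained Hessian is the restriction of $\mathcal{Q}_T$ to a subspace of the admissible variations whose codimension is controlled by $r$, and the correct second-order necessary condition in this non-submersive situation reads $\mathrm{ind}_-\mathcal{Q}_T\le r-1$. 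Together with the previous paragraph this forces $r-1\ge\mathrm{ind}_-\mathcal{Q}_T\ge N\ge r$, a contradiction, so $q(\cdot)$ is not a solution of Problem~\ref{prob:OCP}. Equivalently, one may argue constructively: out of $N-(r-1)\ge 1$ independent conjugate Jacobi fields, after spending $r-1$ of them to cancel the corank degeneracy, one assembles an admissible control variation that fixes both endpoints, stays in a prescribed $C^0$-neighbourhood of $q(\cdot)$, and strictly decreases $J$.

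The main obstacle I expect is the index-jump lemma of the second paragraph for this ``in-between'' Legendre form, whose kernel is nontrivial and genuinely time-dependent: neither the regular theory (nondegenerate Legendre form) nor the totally-singular theory applies directly, and one must split off the moving kernel, reduce $\mathcal{Q}_t$ to a nondegenerate form on a complement, and check that this reduction is well behaved both as $t\to0^+$ and under the Maslov-index computation. The bound $\mathrm{ind}_-\mathcal{Q}_T\le r-1$ for optimal trajectories of corank $r>1$ is the second delicate ingredient, since it rests on a second-order analysis at a point where the endpoint map fails to be a submersion, together with an appropriate choice within the $r$-dimensional family of extremal lifts over $q(\cdot)$.
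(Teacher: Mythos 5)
Your proposal follows the same path as the paper: the Morse index theorem identifies the number of isolated conjugate points (with multiplicity) with $\operatorname{ind}_- Q_T$, and the local-openness criterion for the extended endpoint map $E_T$ (the paper's Theorem~\ref{thm:nec-cond-loc-open}, whose contrapositive is your bound $\operatorname{ind}_-\mathcal{Q}_T\le r-1$) then forces $r-1\ge N\ge r$, a contradiction. The paper states the last step as a sufficient condition for local openness of $E_T$ rather than as a second-order necessary condition on the Hessian, but the two formulations are equivalent.
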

It is worth to notice that, if the vector fields $f_0,f_1,\dots,f_m$ are analytic, then the conjugate points are always isolated. 
On the other hand, in more singular cases for which condition \eqref{eq:SGLC} is violated or the conjugate points are not isolated, it is always possible to apply the theory of Maslov index to obtain similar results, see \cite{AgBes1,AgrBes2}.

After this, one can apply the following sufficient strong local optimality condition.

\begin{thm}[Sufficient condition, absence of conjugate points]
    \label{thm:suff-cond-no-conj-points}
    Let $u\in\U$ be a singular control for Problem \ref{prob:OCP}. Let $\la(\cdot)$ be its corresponding extremal and $q(\cdot)$ its corresponding trajectory.  
    Assume furthermore that strong generalized Legendre condition \eqref{eq:SGLC} holds and that 
    there are no conjugate points along the trajectory $q(\cdot)$, for $t\in[0,T]$.
    Then $q(\cdot)$ is strictly locally strongly optimal.  
\end{thm}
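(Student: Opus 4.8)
The plan is to upgrade the small-time optimality of Theorem~\ref{thm:suff-cond-small-time} to the whole interval $[0,T]$ by patching the local overmaximized Hamiltonians into a single \emph{field of extremals} covering a neighbourhood of the reference trajectory $q(\cdot)$, and then to calibrate every admissible competitor against this field by the usual verification (Hamilton--Jacobi) argument. Condition \eqref{eq:SGLC} is what produces the overmaximized Hamiltonian, while absence of conjugate points on $[0,T]$ is precisely the hypothesis that allows the field to be extended along the whole trajectory. Concretely, along a singular extremal one has $h_c(\la(t))\equiv 1$, and on the singular surface $\{h_c=1\}$ the control-dependent Hamiltonian $h_0+\sum_j u_jh_j-|u|$ attains its maximum over $u\in U$ on an entire ray, so that maximum is degenerate. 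As in the proof of Theorem~\ref{thm:suff-cond-small-time}, I would look for a smooth Hamiltonian of the form $\Ham=h_0+\varphi(h_c)$ near $\la(\cdot)$, with $\varphi$ chosen so that $\Ham$ dominates $h_0+\sum_j u_jh_j-|u|$ for every $u\in U$, vanishes to the correct order on $\{h_c=1\}$, and keeps $\la(\cdot)$ as a trajectory of $\vec{\Ham}$; the matching conditions on $\varphi'(1),\varphi''(1)$ are governed by $\{h_0,h_c\}$ and by $\{h_c,\{h_0,h_c\}\}(\la(\cdot))$, and \eqref{eq:SGLC} is exactly what makes a sufficiently convex, admissible $\varphi$ exist.

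Next I would build the field. Pick a Lagrangian submanifold $\cL_0\subset T^*M$ through $\la(0)$ adapted to the boundary conditions of Problem~\ref{prob:OCP}, and set $\cL_t=e^{t\vec{\Ham}}(\cL_0)$. The linearization of the flow $e^{t\vec{\Ham}}$ along $\la(\cdot)$ is governed by the Jacobi equation \eqref{eq:Jacobi1-rev}, and $\pi\colon\cL_t\to M$ is singular at $\la(t)$ exactly when $t$ is a conjugate time in the sense of Definition~\ref{def:conjugate_point_final_condition_on_the_fibre}. Hence the hypothesis that there are no conjugate points on $[0,T]$ implies that $\pi$ restricts to a diffeomorphism of $\bigcup_{t\in[0,T]}\cL_t$ onto a tubular neighbourhood $\cO$ of $q(\cdot)$: this is the sought field of extremals. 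On $\cO$ the restriction of the tautological $1$-form to the field is closed (being restricted to a Lagrangian submanifold), hence exact on the tube, say equal to $dS$ for a smooth function $S$, and $\Ham(dS_q)$ is constant, equal to the energy $\Ham(\la(t))=h_0(\la(t))$ of the reference extremal.

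The verification step is then standard. Let $v\in\U$ with $q_v(\cdot)\subset\cO$, $q_v(0)=q_0$, $q_v(T)=q_T$. By exactness of $dS$, $\int_0^T\langle dS_{q_v(t)},\dot q_v(t)\rangle\,dt=S(q_T)-S(q_0)=\int_0^T\langle dS_{q_u(t)},\dot q_u(t)\rangle\,dt$, so that
\[ J(v)-J(u)=\int_0^T\!\big(|v|-\langle dS_{q_v},\dot q_v\rangle\big)\,dt-\int_0^T\!\big(|u|-\langle dS_{q_u},\dot q_u\rangle\big)\,dt. \]
Since $\dot q_w=f_0(q_w)+\sum_jw_jf_j(q_w)$ for $w\in\{u,v\}$, the construction of $\Ham$ gives the pointwise inequality $\langle dS_q,f_0(q)+\sum_jw_jf_j(q)\rangle-|w|\leq\Ham(dS_q)$ for every control value, with equality along the reference control; combined with $\Ham(dS_q)\equiv h_0(\la(\cdot))$ this rewrites the right-hand side above as the integral of a nonnegative density, whence $J(u)\leq J(v)$. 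For strictness, \eqref{eq:SGLC} makes that density a positive-definite quadratic form in the component of the control variation transverse to the time-dependent kernel of the Legendre form, so it can vanish only when $v$ reproduces the reference extremal in the field, i.e. $q_v=q_u$; this yields strict local strong optimality in the sense of Definition~\ref{def:str-loc-opt}.

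The main obstacle is the second step, and in particular the identification of the singular locus of $\pi|_{\cL_t}$ with the conjugate-point set of \eqref{eq:Jacobi1-rev}. Because the $L^1$ Legendre form is neither everywhere nondegenerate (as for regular arcs) nor identically zero (as for classical totally singular arcs) but has a non-trivial, time-dependent kernel, neither the classical field-of-extremals construction nor the usual bang/singular arguments carry over verbatim; one must track this kernel — precisely the place where $h_c$ and the bracket $\{h_c,\{h_0,h_c\}\}$ enter — through the entire reduction, which is also what forces the somewhat ad hoc shape $\Ham=h_0+\varphi(h_c)$. A secondary nuisance is the corank bookkeeping when $q(\cdot)$ has corank $r>1$: the vertical fibre $T^*_{q_0}M$ is then not an admissible choice of $\cL_0$, and one must first quotient out the $r$-parameter family of lifts of $q(\cdot)$ — exactly as in Theorem~\ref{thm:nec-cond-conj-points} — before $\pi|_{\cL_t}$ can be a local diffeomorphism.
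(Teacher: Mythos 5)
Your high-level strategy matches the paper's: construct an overmaximized Hamiltonian from \eqref{eq:SGLC}, propagate a Lagrangian $\cL_0$ by its flow to obtain a field, and calibrate competitors against that field (your Poincar\'e--Cartan argument is essentially the proof of Theorem~\ref{thm:CritSuffCond}, which the paper invokes as a black box). The gap is in the step you correctly flag as the ``main obstacle'': you assert that $\pi|_{\cL_t}$ is singular at $\la(t)$ exactly when $t$ is conjugate, and deduce that absence of conjugate points makes the field projection a local diffeomorphism on $[0,T]$. This is false as stated, for any na\"ive choice of $\cL_0$. The conjugate-point boundary value problem in Definition~\ref{def:conjugate_point_final_condition_on_the_fibre} uses the Lagrangian subspace $\Lambda_0 = \Pi_0^{\R Z_I(0)}$, which has an $(n-1)$-dimensional intersection with the vertical fibre $\Pi_0$ since $Z_I(0)\notin \Pi_0$. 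So if you took $T_{\la_0}\cL_0 = \Lambda_0$, the projection would already be singular at $t=0$, regardless of conjugate points.

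The actual proof has to pick a Lagrangian $L_0$ \emph{near but distinct from} $\Lambda_0$, transversal to $\Pi_0$, such that $B_t L_0\cap \Pi_0=\{0\}$ for all $t\in[0,T]$ (where $B_t$ is the linearized Jacobi flow). The danger is that arbitrarily small perturbations of $\Lambda_0$ can already hit $\Pi_0$ at arbitrarily small times (Figure~\ref{fig:choice-L0}), so the perturbation direction must be chosen carefully: one works in a symplectic chart where $\bar\Pi_0$ is the zero matrix, takes $S_0>0$, and uses Proposition~\ref{prop:equivalence_legendre} to get $b_t\geq 0$, so that Lemma~\ref{lem:vel-lagr-curve} yields $\dot S_t\geq 0$ and hence $S_t>0$ for small $t$. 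Continuity of $B_t$ together with absence of conjugate points on $(0,T]$ then extends this to all of $[0,T]$. This is the only place where \eqref{eq:SGLC} is actually \emph{used} in the diffeomorphism argument, and it is missing from your proposal. You also do not address the case where the Jacobi flow has a nontrivial space of constant vertical solutions $C_t$, which requires passing to the quotient $C_t^\angle/C_t$ and applying Lemma~\ref{lem:AgSa21.2}; your remark about corank $r>1$ only gestures at this. Finally, the Ansatz $\Ham=h_0+\varphi(h_c)$ does not quite work: one needs $\{h_S,h_c\}\equiv 0$, while $\{h_0+\varphi(h_c),h_c\}=h_{0c}$ only vanishes on $\mathcal{S}$, which is why $h_S$ is constructed by propagating $h_0$ along $\vh_c$ rather than as a function of $h_0$ and $h_c$ alone.
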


It should be stressed that Theorem \ref{thm:suff-cond-no-conj-points} can be implemented numerically and used for concrete problems as it was done in \cite{BonCaiTrel07} for regular extremals. 

The structure of the paper is following. In Section~\ref{sec:app-PMP} we describe the extremal trajectories of our problem. After that we proceed in studying second order conditions. To simplify the exposition we invert the order of the proofs. First, we prove sufficient small time optimality of Theorem~\ref{thm:suff-cond-small-time} in Section~\ref{sec:super-ham}. After that we introduce the Jacobi equation, prove Theorem~\ref{thm:suff-cond-no-conj-points} in Section~\ref{sec:suff-cond-optim} and provide examples in Section~\ref{sec:examples}. The rest of the paper is dedicated to the study of necessary conditions and derivation of the Jacobi equation. The reason for this, is that the proofs of sufficient conditions are purely geometric, relying on methods of symplectic geometry and dynamical systems. In contrast, the proofs of necessary conditions are more technical, analytic in nature, and requires a careful study of the infinite-dimensional Hessian and the spaces of variations. Therefore, we postpone them to the second half of the paper. This does not create any problem for the first part, since only the correct form of generalized Legendre conditions and the Jacobi equation is needed for the proof of sufficient conditions. Finally, to make the main body of text more readable, in the Appendices~\ref{app:contazzi-var-sec} and~\ref{app:tranf-jac-eq} we prove a number of propositions which are required for the proofs of the three main theorems, but which constitute long technical calculations. 

\medskip

\textbf{Acknowledgements:} I. Beschastnyi was supported by the French government through the France 2030 investment plan managed by the National Research Agency (ANR), as part of the Initiative of Excellence Université Côte d’Azur under reference number ANR-15-IDEX-01.

\section{Description of extremal trajectories}
\label{sec:app-PMP}
In this Section, we fix the notations used in this paper. We mainly follow the book \cite{AgBaBo}. Then, we describe extremal trajectories of the Problem \ref{prob:OCP} using PMP and separate them into two categories: regular and singular extremals.   

\subsection{Definitions and notations}
\label{subsec:defs}
The cotangent bundle $T^*M$ has a canonical symplectic form $\sigma$, that allows us to define Hamiltonian vector fields on $T^* M$ as follows. 
Given a function $h\in C^\infty (T^*M)$, the corresponding Hamiltonian vector field $\vec h$ is defined by
$$
d h = \sigma (\cdot , \vec h).
$$
We recall that we have the following identities, that are used extensively throughout the paper:
\begin{equation}
    \{a,b\} = \vec a (b) = \langle db , a\rangle = \sigma (\vec a,\vec b), 
    \quad
    a,b \in C^\infty (T^*M).
\end{equation}
To shorten the formulas, we introduce two useful notations. First, we use 
     $$
         h_{ij}\coloneqq\{h_i,h_j\}, 
         \; 
         h_{ijk}\coloneqq\{h_i,\{h_j,h_k\}\}
     $$
to indicate nested Poisson bracket. Second, we set 
     $
         f_I(q)
         \coloneqq
         (f_1,\dots,f_m).
     $
     so that system \eqref{MainEq} can be abbreviated to 
     $$
     \dot q = f_0(q) + \langle u,f_I(q)\rangle.
     $$
We do the same for      $
         h_I(\lambda)
         \coloneqq
         (h_1,\dots,h_m),
     $
and define $|h_I|(\lambda) \coloneqq \sqrt{h_1^2(\lambda)+\dots+h_m^2(\lambda)}$ to be its Euclidean norm.

Finally we will often consider differentiation along the flow of a Hamiltonian system
$$
\dot \lambda = \vec H(\lambda).
$$
We will often abbreviate a function $h(\lambda)$ just to $h$ when there is no confusion, and abbreviate $h(\lambda(t))$ to $h(t)$, when we consider the values of $h$ along an integral curve $\lambda(t)$ of the Hamiltonian system above.

\subsection{Description of extremal trajectories}
\label{subsec:description_extremals}

 To solve the Optimal Control Problem~\ref{prob:OCP}, a key tool is Pontryagin Maximum Principle (PMP). We recall here a statement adapted to Problem \ref{prob:OCP} (see \cite{AgSa} for a more detailed discussion about PMP and some of its applications).

 \begin{thm}[Pontryagin Maximum Principle]
     \label{thm:PMP}
     Let $\tu\in\U$ be a solution of the Optimal Control Problem \ref{prob:OCP} and denote $\tilde q$ the corresponding solution of \eqref{MainEq}. Define the Hamiltonian function
     \begin{equation}
         H(\nu,\lambda,u)=\big \langle \lambda , f_0 + \langle u,f_I \rangle \big\rangle + \nu|u|, 
         \quad 
         \lambda\in T^*M, u\in U, \nu \in \R . 
     \end{equation}
     Then there exists a non-trivial pair 
     \begin{equation}
         (\nu,\la(t))\neq 0, \, \quad \la(t) \in T_{\tilde q(t)}^*M, \, \nu \in \R,
     \end{equation}
     such that the following conditions hold:
     \begin{align}
         \label{eq:HamSystPMP}
         &\dot \la(t) = \vec H(\la(t),\tu(t)), \quad \text{for a.e. }\, t\in[0,T],
         \\[5pt]
         \label{eq:PMP-max-cond}
         &H(\nu,\la(t),\tu(t)) = \max _{u\in U} H(\nu,\la(t),u) \quad \text{for a.e. }\, t\in[0,T],
         \\
         &\nu \leq 0.
     \end{align}
 \end{thm}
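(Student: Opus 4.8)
The plan is to obtain Theorem~\ref{thm:PMP} as a specialization of the classical Pontryagin Maximum Principle for Mayer problems with partially constrained endpoints, so that no genuinely new analysis is required — only a reformulation and a careful accounting of nontriviality and of the sign of $\nu$. First I would lift the running cost to a state coordinate: set $\hat M = M\times\R$ with coordinate $(q,y)$, append the equation $\dot y = |u|$ with $y(0)=0$, so that $J(u)=y(T)$, and consider the extended control system $\dot{(q,y)} = \hat f\big((q,y),u\big)$ with $\hat f=\big(f_0(q)+\langle u,f_I(q)\rangle,\;|u|\big)$. This is a smooth control system, affine in the control in the $q$-direction, with $u$ ranging over the compact set $U$; Problem~\ref{prob:OCP} becomes the problem of minimizing the terminal value $y(T)$ over trajectories of $\hat f$ joining $\{q=q_0,\;y=0\}$ to the fiber $\{q=q_T\}$ in fixed time $T$. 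This is precisely a Mayer problem with fixed initial point and constrained (but not fully fixed) terminal point, to which the PMP of \cite{AgSa} (or \cite{AgBaBo}) applies; the only care needed at this stage is to verify the smoothness and compactness hypotheses of the quoted theorem for $\hat f$.

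Applying that theorem yields a nontrivial covector $\hat\lambda(t)=(\la(t),\,p_y(t))\in T^*_{(\tilde q(t),\tilde y(t))}\hat M$ (together with the abnormal multiplier) satisfying the Hamiltonian system and the maximality condition for the extended Hamiltonian $\widehat H(\hat\lambda,u)=\langle \la, f_0+\langle u,f_I\rangle\rangle + p_y\,|u|$. Since $\hat f$ does not depend on $y$, the $p_y$-equation reads $\dot p_y=-\partial_y\widehat H=0$, so $p_y\equiv\nu$ is constant; renaming it $\nu$ identifies $\widehat H$ with the $H(\nu,\la,u)$ of the statement, and the $\la$-part of the Hamiltonian system becomes exactly $\dot\la=\vec H(\la,\tu)$, i.e. \eqref{eq:HamSystPMP}. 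The maximality condition $\widehat H(\hat\lambda(t),\tu(t))=\max_{u\in U}\widehat H(\hat\lambda(t),u)$ is \eqref{eq:PMP-max-cond}. The transversality condition at $t=T$ relative to the fiber $\{q=q_T\}$ imposes no constraint on $\la(T)$ but pins the $y$-component: because $y(T)$ is free and is being minimized, transversality forces $p_y(T)=\nu\le 0$ in the normalization where the cost enters $\widehat H$ with a nonpositive coefficient, which is the last assertion. For nontriviality of $(\nu,\la(t))$: the general PMP gives $(\nu,\la(t))=\hat\lambda(t)\neq 0$, and since $\la(\cdot)$ solves a linear ODE it is either nowhere zero or identically zero, in which case $\nu\neq 0$; either way $(\nu,\la(t))\neq 0$ for every $t\in[0,T]$.

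If a self-contained proof is wanted instead, I would run the standard needle-variation argument directly on the extended system. At each Lebesgue point $\tau$ of $\tu$ and each $w\in U$, the $\eps$-needle replacing $\tu$ by $w$ on $[\tau-\eps,\tau]$ produces, to first order in $\eps$, the endpoint variation $v_\tau(w)=(\Phi_T)_*\big(\hat f(\tilde q(\tau),\tilde y(\tau),w)-\hat f(\tilde q(\tau),\tilde y(\tau),\tu(\tau))\big)$, where $\Phi_t$ is the flow of the reference trajectory. Taking the closed convex cone $K$ generated by all the $v_\tau(w)$ together with $\pm\hat f$ at the endpoint, optimality forces the improvement ray $\{(0,-s):s>0\}$ not to meet the relative interior of $K$; a Hahn–Banach separation then produces the covector $\hat\lambda(T)$, and transporting it backward by $\Phi_t^*$ and unwinding the separation inequalities gives the maximality condition \eqref{eq:PMP-max-cond}, while the adjoint transport is exactly \eqref{eq:HamSystPMP}.

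The main obstacle, in this self-contained route, is the step asserting that $K$ genuinely obstructs the improvement direction: one must show that if $\{(0,-s):s>0\}$ lay in the relative interior of $K$, then suitable multi-needle variations would reach a true admissible trajectory with $q(T)=q_T$ and strictly smaller $y(T)$, contradicting optimality. This is the usual Brouwer fixed-point (or open-mapping) argument, and the delicate point here is to maintain the endpoint constraint $q(T)=q_T$ exactly while improving the cost — that is, to work in the fiber over $q_T$ and apply the fixed-point theorem to the map sending the needle parameters to $q(T)$. In the route via the cited PMP, by contrast, there is no real obstacle: everything reduces to checking the hypotheses of the quoted theorem and reading off the correct sign of $\nu$ from the transversality condition.
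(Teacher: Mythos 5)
Your proposal is correct and is essentially the route the paper itself takes: Theorem~\ref{thm:PMP} is stated there without proof, as an adaptation of the classical PMP quoted from \cite{AgSa}, which is exactly what your reduction carries out. Your extra bookkeeping (the Mayer lift with $\dot y=|u|$, constancy of $p_y$ giving $\nu$, transversality at the free $y(T)$ forcing $\nu\le 0$, and the linear-ODE argument for nontriviality of $(\nu,\la(t))$) is the standard way to justify that specialization, and the needle-variation sketch is the usual self-contained alternative; no gap.
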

 We call \emph{extremal trajectory} any solution $\la$ of the Hamiltonian system \eqref{eq:HamSystPMP}.

By definition~\eqref{eq:def_basis_hamiltonians} and notations of the previous subsection, the Hamiltonian function $H\in C^\infty(T^*M)$ in PMP can be written as:
\begin{align*} 
    H
    =
    h_0
    +
    \langle 
        u,h_I
    \rangle
    -
    |u|,
    \quad    h_I
    =
    (
    h_1,\dots,h_m
    ).
\end{align*}
 We will study in detail only the normal case, that is, when we normalize $\nu=-1$. The abnormal case, corresponding to $\nu=0$, coincides with the time-optimal problem, which is already well studied (see~\cite{agrachev_biolo1,Agrachev_biolo2}). 

Using spherical coordinate of $U$, that is by writing $u=rv$, $r=|u|,v\in S^{m-1}$, the Hamiltonian $H$ takes the form 
\begin{equation*}
    H
    =
    h_0
    +
    r
    \big(
    \langle 
        v,h_I
    \rangle
    -
    1
    \big),
\end{equation*}
and the corresponding Hamiltonian vector field becomes 
\begin{equation*}
    \Vec{H}
    =
    \Vec{h}_0
    +
    r
    \langle 
        v,\Vec{h}_I
    \rangle
    .
\end{equation*}
According to PMP, an optimal control $u$ must maximize $H$ for almost every time. This can only happen if $v$ is collinear with $h_I$. Therefore,
\begin{equation*}
    v
    =
    \frac{h_I}{|h_I|},
\end{equation*}
which yields
\begin{equation*}
    H
    =
    h_0
    +
    r(
    |h_I|-1
    ).
\end{equation*}
If $|h_I|>1$, the maximum is achieved when $r=1$, while if $|h_I|<1$, the Hamiltonian is maximized for $r=0$. Therefore, if along an extremal $\la(t)$ we have $ |h_I(t)| \neq 1 $, at each moment of time the maximum in \eqref{eq:PMP-max-cond} is attained at a unique point in $U$ and the control $u$ is completely determined by
\begin{equation}
    u(t) 
    =
    \begin{cases}
        \frac{h_I(t)}{|h_I(t)|}, & \text{ if } | h_I(t) | > 1, \\
        0, & \text{ if } | h_I(t) | < 1.
    \end{cases}
\end{equation}
We will call these controls \emph{regular}.

Now, we want to analyse the case $|h_I(t)|\equiv 1$, which gives no direct information on $r$ since any $r\in[0,1]$ realizes the maximum in \eqref{eq:PMP-max-cond}. We will call these controls \emph{singular}.
The Hamiltonian vector field reads
\begin{equation*}
    \Vec{H}
    =
    \Vec{h}_0
    +
    r
    \overrightarrow{|h_I|}
    ,
    \quad 
    r(t)\in[0,1].
\end{equation*}
We define 
$
h_c
\coloneqq
\frac{1}{2}
|h_I|^2
$.
Notice that on the hypersurface $|h_I|=1$ we have 
$
    \Vec{h}_c
    =
    \overrightarrow{
        \frac{1}{2}
        |h_I|^2
        }
    =
    |h_I|
    \overrightarrow{|h_I|}
    =
    \overrightarrow{|h_I|}.
$
So, every singular extremal $\lambda$ can be described as a solution of the Hamiltonian system
\begin{equation*}
    \dla
    =
    \Vec{h}_0(\la)
    +
    r(t)
    \vh_c
    (\la)
    ,
    \quad 
    r\in L^\infty([0,T];[0,1]).
\end{equation*}
Since we are on the surface $|h_I| = 1$ we also have 
$
h_c(t)
\equiv
\frac{1}{2}
|h_I(t)|^2
\equiv
\frac{1}{2}
$
for all 
$t\in[0,T]$. We can differentiate this equality in $t$:
\begin{equation*}
    0 =\frac{d}{dt}
    \left(
        h_c(t)
    \right)
    =
    \{
        h_0
        +
        r
        h_c
        ,
        h_c
    \}
    (t)
    =
    h_{0c}(t).
\end{equation*}
This equation does not give us any information on $r$. However, it puts an additional constraint on $\la(t)$ in order to be a singular extremal.
If we differentiate in $t$ the previous equality, we obtain:
\begin{equation*}
    \frac{d^2}{dt^2}
    \left(
        h_c(t)
    \right)
    =
    \{
        h_0
        +
        r h_c
        ,
        h_{0c}
    \}
    (t)
    =
    h_{00c}
    (t)
    +
    r
    h_{c0c}
    (t)
    =
    0.
\end{equation*}
If $h_{c0c}(\lambda) \neq 0$, we obtain the following equation for the control $r$:
\begin{equation}
    \label{eq_rho}
    r(\la)
    =
    -
    \frac{
        h_{00c}
    }{
        h_{c0c}
    }
    (\la)
    =
    \frac{
        h_{00c}
    }{
        h_{cc0}
    }
    (\la)
    .
\end{equation}
Hence, the singular control is completely determined, that is 
\begin{equation}
\label{eq:sing-control}
u(\la)
=
r(\la)h_I(\la)
\end{equation}
and the Hamiltonian system reads
\begin{equation}
    \label{eq:sing-ham-syst}
    \dla
    =
    \Vec{h}_0(\la)
    +
    r(\la)
    \vh_c
    (\la).
\end{equation}
It is easy to check that the flow of \eqref{eq:sing-ham-syst} preserves the submanifold $\{\la \in T^*M \mid |h_I(\la)|=1, \, h_{0c}(\lambda)=0\}$.

In this work, we focus only the case $h_{c0c}(t)\neq 0$. 
The case $h_{c0c}(t)\equiv0$ can happen as well and can be treated similarly although it is a bit more delicate. 
One can go on and differentiate the equalities $h_{c0c}(t)=0$ and $h_{00c}(t)=0$. 
Then, there are two possibilities: either a non-trivial relation involving $r$ is found after a finite number of differentiations, or you obtain an infinite number of relations. 
In the first case, a similar analysis to the one that we are going to show can be performed. 
In the second case, since $T^*M$ is finite dimensional, there is some $k\in \N$ such that all the relations that we obtain after $k$ differentiations must be linearly dependent. 
Again, from this fact, one can obtain some information on the optimality of the solutions.

\begin{section}{The super-Hamiltonian function and sufficient conditions for strong local optimality of short arcs}
    \label{sec:super-ham}
    We now describe sufficient conditions for local strong optimality (see Definition \ref{def:str-loc-opt}) for singular extremals of Problem \ref{prob:OCP}. In the previous section we assumed that $h_{c0c}(t) \neq 0$ to derive the equations for the singular extremals. The goal of this section is to prove that $h_{c0c}>0$ implies strong local optimality of short arcs (see Theorem~\ref{thm:suff-cond-small-time} and explanations before). This condition can be derived as a necessary condition for optimality, and this will be done in Section~\ref{sec:suff-cond-optim}. For now, we will consider it as an educated guess.
    
    A general strategy for determining sufficient conditions is the so-called method of \emph{field of extremals} (see, for example, \cite{AgSa}, Chapter 17). It is mostly used when the control determined by PMP is unique. More precisely, one must have that 
    \begin{equation}
        \label{eq:reg-extrem}
        \forall t, \; \exists! u\in U \text{ s.t. } H(\la(t),u)=\max_{v\in U}H(\la (t),v).
    \end{equation}
    This way, the maximized Hamiltonian function and the flow of the corresponding Hamiltonian vector fields are well defined on the whole $T^*M$. This is not true any more for singular extremals, for which the maximum in \eqref{eq:reg-extrem} is attained for infinite many values of $u\in U$. However, it is still possible to adapt the technique of the field of extremals to the singular case. It was first carried out in \cite{Ste08}, and then the technique was extended also to other cases (see, for instance \cite{ChittStef,PoggStef-bang-sing-bang,ChittPogg}). 
    
    Now, we recall a particular version of this principle, which is well suited to our case.   
    Let $a\in C^\infty (M)$, $H_S\in C^\infty(\R \times T^*M)$ be a time-dependent Hamiltonian, $\Phi_t ^S$ the flow of $\vec H_S$ on $T^* M$. 
    We define the following submanifolds:
    \begin{align}
        \cL_0 &= 
        \{ 
            (q,d_q a) \in T^* M \, | \, q \in M
        \},
        \\[5pt]
        \cL_t &= \Phi_t ^S(\cL_0),
        \\[5pt]
        \cL &=
        \{
            (t,\la) \, | \, t\in[0,T], \,\la\in\cL_t
        \}
        \subset
        T^*M \times \R
        .
    \end{align}
    We recall the definitions of the Hamiltonian function from PMP and the relative maximized Hamiltonian
    \begin{equation}
        H(\la,u)
        =
        \big\langle
            \la , f_0(q) + \langle u,f_I(q)\rangle
        \big\rangle
        -
        |u|,
        \qquad 
        H_M(\la)
        =
        \max_{u\in U} H(\la,u).
    \end{equation}
    \begin{thm}[Stefani, Zezza]
    \label{thm:CritSuffCond}
        Let $u(t)$ be a singular control and let $q(t)$ and $\la(t)$ be its corresponding solution of \eqref{MainEq} and \eqref{eq:HamSystPMP}. Suppose that $H_S \in C^\infty(\R \times T^*M)$ is a time-dependent Hamiltonian satisfying
        \begin{enumerate}
            \item $H_S(t,\ell_t) \geq H_M(\ell_t)$, where $\ell_t=\Phi_t ^S(\ell_0)$, $\Phi_t ^S$ the flow of $\vec H_S$, $\ell_0\in\cL_0$;
            \item $H_S(t,\la_t)=H_M(\la_t)$ for a.e. $t\in [0,T]$;
            \item $\vec H_S (t,\la_t) = \vec H (\la_t, u(t))$ for a.e. $t\in[0,T]$;
            \item there are open neighbourhoods $\mathcal{V}\subset T^*M$ and $\mathcal{O}\subset M$ of the singular extremal $\la(\cdot)$ and the trajectory $q(\cdot)$ respectively such that the function $\operatorname{id} \times \pi : \cL \to \R \times M $
            \begin{equation}
                \label{eq:def-psi-proiezione}
                (\operatorname{id}\times \pi )(t,\ell_0) = (t , \pi(\ell_t)),
            \end{equation}
            is a smooth diffeomorphism from $\big([0,T]\times \mathcal{V}\big) \cap \mathcal{L}$ onto $[0,T]\times\mathcal{O}$. 
            Here $\pi:T^*M\to M$ denotes the standard bundle projection.
        \end{enumerate}
        Then, $q$ is strictly locally strongly optimal.
    \end{thm}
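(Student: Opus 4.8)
The plan is to run the classical field-of-extremals (calibration/verification) argument, adapted to the fact that the maximizing control is not unique. All four hypotheses will be combined into a single primitive on the Lagrangian manifold $\cL$, against which the cost along the reference trajectory and along any admissible competitor is compared. Let $s$ be the Liouville $1$-form of $T^*M$, so that $s_\la(\xi)=\langle\la,\pi_*\xi\rangle$ and $\D s=\sigma$. Since $\cL_0$ is the graph of the exact form $\D a$ it is an exact Lagrangian submanifold, and since each $\Phi_t^S$ is a symplectomorphism, every leaf $\cL_t=\Phi_t^S(\cL_0)$ is Lagrangian. On $\cL\subset T^*M\times\R$ set $\varpi=s-H_S\,\D t$. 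The key computation is that $\varpi$ is closed on $\cL$: at a point $(t,\la)$ the space $T\cL$ is spanned by $T_\la\cL_t$ (the leaf, at fixed $t$) together with the flow generator $W=\pa_t+\vec H_S(t,\la)$; on two leaf vectors both $\sigma$ (because $\cL_t$ is Lagrangian) and $\D t$ vanish, while pairing a leaf vector $X$ with $W$ gives $(\sigma-\D H_S\wedge\D t)(W,X)=\sigma(\vec H_S,X)+\D_\la H_S(X)=0$ by the defining relation $\D H_S=\sigma(\cdot,\vec H_S)$. Hence $\varpi|_\cL$ is closed; since $\cL$ deformation-retracts onto $\cL_0$ along the flow, and since we only work inside the chart provided by hypothesis 4, where $\cO$ may be taken contractible, there is no $H^1$-obstruction, so $\varpi|_\cL=\D\mathcal{S}$ for a smooth $\mathcal{S}\colon\cL\to\R$ which we normalize by $\mathcal{S}|_{\cL_0}=a\circ\pi$.

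By hypothesis 4 transport $\mathcal{S}$ through the diffeomorphism $\operatorname{id}\times\pi$ to a function $\bar{\mathcal{S}}(t,q)$ on $[0,T]\times\cO$, and let $\ell(t,q)\in\cL_t\cap\V$ be the unique covector over $q$. By hypothesis 3, $\dla(t)=\vec H(\la(t),u(t))=\vec H_S(t,\la(t))$, so $\la(\cdot)$ is an integral curve of $\vec H_S$; it is part of the compatibility of the data (one picks $a$ with $\D_{q_0}a=\la(0)$) that $\la(0)\in\cL_0$, whence $\la(t)=\Phi_t^S(\la(0))\in\cL_t$ and $\ell(t,q(t))=\la(t)$ for all $t$. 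For the reference cost, $H(\la,u)=\langle\la,f_0+\langle u,f_I\rangle\rangle-|u|=\langle\la,\dq\rangle-|u|$ gives $|u(t)|=\langle\la(t),\dq(t)\rangle-H(\la(t),u(t))$, and since a singular control maximizes $H(\la(t),\cdot)$ we have $H(\la(t),u(t))=H_M(\la(t))=H_S(t,\la(t))$ by hypothesis 2. Therefore the pullback of $\varpi$ along $\gamma(t)=(t,\la(t))\subset\cL$ is exactly $|u(t)|\,\D t$, so
\[
J(u)=\int_0^T|u(t)|\,\D t=\int_\gamma\varpi=\mathcal{S}\big(T,\la(T)\big)-\mathcal{S}\big(0,\la(0)\big).
\]

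Now let $v\in\U$ with $q_v(\cdot)\subset\cO$, $q_v(0)=q_0$, $q_v(T)=q_T$, and form the lift $\mu(t)=\ell(t,q_v(t))\in\cL_t\cap\V$; it is Lipschitz (as $\ell$ is smooth and $q_v$ is Lipschitz), and by uniqueness of the covector over $q_0$ and $q_T$ we get $\mu(0)=\la(0)$, $\mu(T)=\la(T)$. Then, for a.e.\ $t$,
\[
|v(t)|=\langle\mu(t),\dq_v(t)\rangle-H(\mu(t),v(t))\ \ge\ \langle\mu(t),\dq_v(t)\rangle-H_M(\mu(t))\ \ge\ \langle\mu(t),\dq_v(t)\rangle-H_S(t,\mu(t)),
\]
using $H(\mu,v)\le H_M(\mu)$ and hypothesis 1. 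The right-hand side is the pullback of $\varpi$ along $\tilde\gamma(t)=(t,\mu(t))\subset\cL$, so by exactness of $\varpi|_\cL$ and the matching endpoints,
\[
J(v)\ \ge\ \int_{\tilde\gamma}\varpi\ =\ \mathcal{S}\big(T,\la(T)\big)-\mathcal{S}\big(0,\la(0)\big)\ =\ J(u).
\]

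For strictness, suppose $J(v)=J(u)$; then both inequalities above are equalities for a.e.\ $t$, i.e.\ $H_S(t,\mu(t))=H_M(\mu(t))=H(\mu(t),v(t))$ a.e. The super-Hamiltonian $H_S$ used here will be constructed so as to dominate $H_M$ \emph{strictly} on $\cL_t\setminus\{\la(t)\}$ — this strict overmaximization, which is exactly where condition \eqref{eq:SGLC} enters, is the substance of the construction carried out in the rest of the section — so the first equality forces $\mu(t)=\la(t)$, hence $q_v\equiv q$, and then the second forces $v(t)=u(t)$ a.e.\ because along the fixed extremal $\la(\cdot)$ the singular control is pinned down by \eqref{eq_rho}; this contradicts $q_v\neq q$. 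Hence the inequality is strict and $q$ is strictly locally strongly optimal. The genuine difficulty of the whole program is not this verification step but the construction of an $H_S$ satisfying hypotheses 1--4 together with strict overmaximization; within the present proof the only delicate points are the closedness of $\varpi$ on $\cL$ (the Poincaré--Cartan integral invariant) and the bookkeeping needed because competitors are merely Lipschitz and their controls merely $L^\infty$, so that all curves above live in $W^{1,\infty}$ and the line integrals of $\varpi$ are meaningful.
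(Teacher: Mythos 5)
Your argument is the Poincar\'e--Cartan calibration (field-of-extremals) argument, which is precisely what the paper refers to when it says the proof is ``a straightforward adaptation of Theorem 17.1 in \cite{AgSa}.'' The computation that $\varpi=s-H_S\,\D t$ is closed on $\cL$, the choice of the primitive $\mathcal{S}$, the use of hypothesis~4 to lift a competitor $q_v$ to a Lipschitz curve $\mu(t)=\ell(t,q_v(t))$ in $\cL$ with the same endpoints as $\la(\cdot)$, and the chain of inequalities $|v(t)|\ge\langle\mu,\dq_v\rangle-H_M(\mu)\ge\langle\mu,\dq_v\rangle-H_S(t,\mu)=\varpi(\dot{\tilde\gamma})$ are all correct and complete. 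This establishes $J(v)\ge J(u)$, i.e.\ local strong optimality, exactly as the cited reference does.

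The strictness, however, is where your argument has a genuine gap. You deduce $\mu(t)=\la(t)$ from equality $H_S(t,\mu(t))=H_M(\mu(t))$ by invoking \emph{strict} domination of $H_M$ by $H_S$ on $\cL_t\setminus\{\la(t)\}$, and you assert that the $H_S$ built later in the section has this property. But it does not: the construction in the proof of Theorem~\ref{thm:suff-cond-small-time} gives $h_S(\exp(\tau\vh_c)\ell)-h_0(\exp(\tau\vh_c)\ell)=\frac{\tau^2}{2}h_{c0c}(\ell)+o(\tau^2)$, which is positive only off the surface $\mathcal{S}=\{h_{0c}=0\}$, while $H_S=H_M$ on $\cL_t\cap\mathcal{S}$; since $\cL_0\subset\Sigma$ has dimension $n$ and $\mathcal{S}$ has codimension $1$, the set $\cL_t\cap\mathcal{S}$ is generically of dimension $n-1$ and is strictly larger than $\{\la(t)\}$ whenever $n\ge2$. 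So strict domination is neither a hypothesis of the theorem (hypothesis~1 only gives $H_S\ge H_M$) nor a property of the constructed $H_S$, and the step ``equality $\Rightarrow\mu(t)=\la(t)$'' is unjustified. Even granting $\mu\equiv\la$, your final inference that $v(t)=u(t)$ does not follow from \eqref{eq_rho} alone; the equalities $q_v\equiv q$ and $H(\la,v)=H_M(\la)$ only give $\langle v-u,f_I(q)\rangle=0$ and $|v|=|u|$, so $v$ need not equal $u$ pointwise (though this is harmless for strictness in the sense of $q_v\neq q\Rightarrow J(v)>J(u)$). In short: the approach is the right one and the non-strict conclusion is correctly proved, but the strictness requires an argument that neither you nor the theorem's listed hypotheses supply; the strict inequality must come from a more delicate use of the structure of $\cL$ and the Hamiltonian flow (or from an additional hypothesis), and you should not claim that the constructed super-Hamiltonian is strictly overmaximizing away from $\la(t)$.
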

    The proof of this statement is a straightforward adaptation of Theorem 17.1 in \cite{AgSa}. A more general form of this result, which applies also when $\Phi_t ^S$ is not $C^\infty$, was proven in \cite{StefZezz17}. 
    
    The geometric interpretation of this method is the following. The submanifold $\mathcal L_0$ represents the a set of initial conditions for extremals of PMP. If there would have been a unique maximizing control for the Hamiltonian, there would have been no need for $H_S$. We could have used $H_M$ to propagate $\mathcal L_0$ along its flow to obtain a field of extremals. In essence, if the projection $\operatorname{id} \times \pi$ restricted to $\R\times\mathcal L$ is not one-to-one, then there would be several extremal trajectories arriving to the same point. Therefore, we can not guarantee optimality. If the projection is a local diffeomorphism, then one can prove local optimality using the integral of the Poincaré-Cartan form (see~\cite[Chapter 17]{AgSa}). 
    
    In the singular case there is no nice smooth Hamiltonian in the neighbourhood of a singular trajectory that would also describe the extremal trajectories in its small neighbourhood. Nevertheless, we saw in Section~\ref{sec:app-PMP} that one can define a smooth Hamiltonian if we restricted to the level set $|h_I| = 1$. The idea is to extend it to the Hamiltonian $H_S$, which under the conditions of Theorem~\ref{thm:CritSuffCond}, allows one to prove minimality. 
    
    So what we claim is that the condition $h_{c0c}(t) > 0$ is sufficient to construct such a Hamiltonian $H_S$. This will give the proof of Theorem~\ref{thm:suff-cond-small-time} that comes next.
    
    \begin{proof}[Proof of Theorem~\ref{thm:suff-cond-small-time}]  The proof is separated into two parts. First, we find the Lagrangian manifold $\mathcal{L}_0$ and a time-independent Hamiltonian $h_S$ which satisfy  points 2 and 4 of Theorem~\eqref{thm:CritSuffCond}. After that, we show how to obtain from $h_S$ a time-dependent Hamiltonian $H_S$ that satisfies all four points. We fix a singular extremal trajectory $\lambda(t)$ that satisfies the assumptions of the theorem. Since the statement is local in time, we can prove it at $t=0$. Then the same argument applies for any $t\in[0,T]$.

    We begin with construction of the Lagrangian manifold $\mathcal L_0$. Define 
        \begin{align}
            \Sigma &= \{ \ell \in T^*M \mid \: |h_I(\ell)|=1\}
            \\
            \mathcal{S} &= \{ \ell \in T^*M \mid \: h_{0c}(\ell)=0 \}.
        \end{align}
        First, we find $a\in C^\infty(M)$ such that $\la(0)\in\cL_0$ and $\cL_0 \subseteq \Sigma$. 
        Notice that this is equivalent to the equation
        \begin{equation}
            d_{q(0)} a = \la(0),
            \qquad 
            \sum_{i=1} ^m
            |
            \langle 
                d_q a , f_i (q)
            \rangle
            |^2
            =1.
        \end{equation}
        We can solve this equation locally using the method of the characteristics. We have to find a non-characteristic hypersurface $N\subset M$ and take $a_{|N}=0$. We can choose in particular any $N$ such that $T_{q_0}N \subset \ker \lambda_0$, so that $N$ is transversal to the distribution $\mathrm{span}\{f_1, \dots f_m\}$ at $q_0$ since $|\langle \lambda_0, f_I(q_0)\rangle|^2=1$ (and by continuity is transversal also in a neighborhood of $q_0$), so our equation satisfy the non-characteristic assumption. 
        This will ensure that point 4 of Theorem \ref{thm:CritSuffCond} holds for sufficiently small time if $\Sigma$ is preserved by the super-Hamiltonian, since 
        \begin{equation}
            d _{(0, \la(0))} \Psi 
            = 
            \begin{pmatrix}
                1 & \ast \\
                0 & d_{\la(0)} \pi
            \end{pmatrix}.
        \end{equation}

        Now, we want to find an Hamiltonian function $h_S$ such that point 2 and 4 of Theorem \ref{thm:CritSuffCond} holds.
        Our reference control is 
        \begin{equation}
            u(t)
            =
            \frac{h_{00c}(t)}{h_{c0c}(t)}
            h_I(t),
        \end{equation}
        and $h_S$ must satisfy along the trajectory $\lambda(t)$
        \begin{equation}
            h_S(t) = H (\la(t), u(t))=h_0(t),
        \end{equation}        
        which is satisfied if, for instance, we set 
        \begin{equation}
            h_S(\ell) = h_0 (\ell) \quad \text{ for } \ell \in \mathcal{S}\cap \Sigma.
        \end{equation}
        In order to have $\vec{h}_S$ tangent to $\Sigma$ (which implies $\cL_t \subset \Sigma$ for $t>0$)
        we can solve 
        \begin{align}
            \label{eq:diff-eq-super-ham}
            &\langle d_\ell h_c, \vec{h}_S \rangle = 0, 
            \quad 
            \ell \in T^*M
            \\
            &h_S(\ell) = h_0 (\ell), \quad 
            \quad
            \ell \in \mathcal{S}.
        \end{align}
        We have from~\eqref{eq:diff-eq-super-ham} 
        $$
        \langle d_\ell h_S , \vec h_c\rangle = \{h_S,h_c\}(\ell) =  -\langle d_\ell h_c, \vec{h}_S\rangle  = 0.
        $$
        This implies that $h_S$ is constant along the flow lines of $\vec h_c$. In addition, since $\vec h_c(|h_I|) = \{h_c,|h_I|\} = 0$, the flow $\vec h_c$ preserves $\Sigma$. Also, since $\vec h_c(h_{0c})(\ell) = h_{c0c}(\ell) > 0$ for all $\ell\in \mathcal S$ sufficiently close to $\lambda(0)$, the flow of $\vec h_c$ is locally transversal to $\mathcal{S}$. So, for $\ell \in T^*M$ close enough to $\mathcal{S}$, we have that $\ell = \ell_t \coloneqq \exp (t \vh_c) (\ell_0)$, for a unique couple $(\ell_0,t)\in \mathcal{S}\times \R$, and the solution of Equation \eqref{eq:diff-eq-super-ham} is
        \begin{equation}
            \label{eq:def-HS}
            h_S(\ell_t) = h_0(\ell_0),
        \end{equation}
        as depicted in Figure \ref{fig:super-ham}.    
        This choice satisfies point 2 of Theorem \ref{thm:CritSuffCond}.
        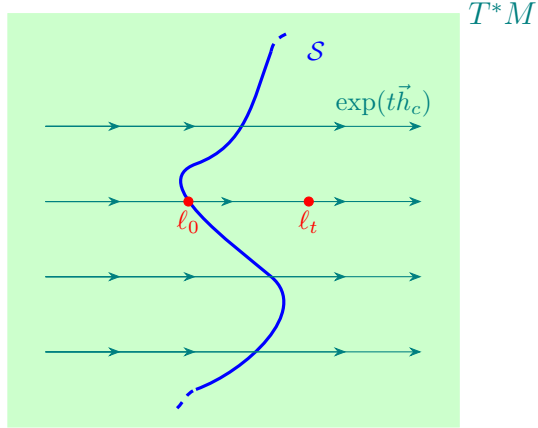
\begin{figure}[ht]
            \centering
            \begin{tikzpicture}

    \filldraw[color=green!20] (-2.5,-0.5) rectangle (3.5,5)
    node[right,teal] {\large$T^*M$};

    \draw[color=blue, very thick] (0,0) 
            to [in=320,out=20] (1,1.5)
            to [out=140,in=200] (0,3)
            to [in=250,out=20] (1,4.5)
            ;
    \node[blue] at (1.6,4.5) {$\mathcal{S}$} ;
    \draw[color=blue, very thick,dashed] (-0.25,-0.25) to [in=200,out=45] (0,0);
    \draw[color=blue, very thick,dashed] (1,4.5) to [in=200,out=80] (1.25,4.75);

    \draw[color=teal, -Stealth] (-2,0.5) -- (3,0.5);
    \draw[color=teal, -Stealth] (-2,0.5) -- (2,0.5);
    \draw[color=teal, -Stealth] (-2,0.5) -- (0,0.5);
    \draw[color=teal, -Stealth] (-2,0.5) -- (-1,0.5);

    \draw[color=teal, -Stealth] (-2,1.5) -- (3,1.5);
    \draw[color=teal, -Stealth] (-2,1.5) -- (2,1.5);
    \draw[color=teal, -Stealth] (-2,1.5) -- (0,1.5);
    \draw[color=teal, -Stealth] (-2,1.5) -- (-1,1.5);

    \draw[color=teal, -Stealth] (-2,2.5) -- (3,2.5);
    \draw[color=teal, -Stealth] (-2,2.5) -- (2,2.5);
    \draw[color=teal, -Stealth] (-2,2.5) -- (0.5,2.5);
    \draw[color=teal, -Stealth] (-2,2.5) -- (-1,2.5);

    \draw[color=teal, -Stealth] (-2,3.5) -- (3,3.5)
    node[above,xshift=-5mm] {$\exp(t\vh_c)$};
    \draw[color=teal, -Stealth] (-2,3.5) -- (2,3.5);
    \draw[color=teal, -Stealth] (-2,3.5) -- (0,3.5);
    \draw[color=teal, -Stealth] (-2,3.5) -- (-1,3.5);

    \filldraw[red] (-0.1,2.5) circle (0.06) node[below] {$\ell_0$};
    \filldraw[red] (1.5,2.5) circle (0.06) node[below] {$\ell_t$};

\end{tikzpicture}
            \caption{Construction of $h_S$.}
            \label{fig:super-ham}
        \end{figure}
\\[5pt]

Now we start constructing the Hamiltonian $H_S$ and focus on point 3 and then on point 1. First, we show that
    \begin{align}
        \label{eq:diff-hs-h0-is-zero}
        d_\ell(h_S-h_0)
        =
        0,
    \end{align}
    for $\ell\in \mathcal{S}\cap \Sigma$. To prove the claim, we can consider the splitting
    \begin{equation}
            T_\ell( T^*M ) 
            = 
            T_\ell \mathcal{S}
            \oplus 
            \R \vh_c(\ell), 
            \quad 
            \ell \in \mathcal{S}\cap\Sigma.
    \end{equation}
    By definition of $h_S$, we have that $h_S(\ell)=h_0(\ell)$ for any $\ell\in \mathcal{S}$, so, if $v\in T_\ell \mathcal{S}$, then $\langle d_\ell (h_S - h_0),v\rangle=0$. Moreover, we have 
    \begin{align}
        \langle d_\ell (h_S - h_0) ,\vh_c\rangle
        =
        \langle d_\ell h_S ,\vh_c \rangle
        -
        \langle  d_\ell h_0, \vh_c \rangle
        = \{h_S,h_c\}(\ell) - \{h_0,h_c\}(\ell)=
        0,
    \end{align}
    where the first Poisson bracket vanishes by the definition of $h_S$ and the second by definition of $\mathcal S$.

     Next notice that in all of the previous discussion we only used that $\{h_S,h_c\} = 0$. It means that we can freely add to $h_S$ any function that Poisson commutes with $h_c$ and is zero on $\Sigma$. We define the final $H_S$ as the time-dependent Hamiltonian
    \begin{equation}
        H_S(t,\ell)
        =
        h_S (\ell)
        +
        r(t)
        (
            |h_I(\ell)|-1
        ),
    \end{equation}
    where $r(t)=\frac{h_{00c}(t)}{h_{cc0}(t)}$ and the hamiltonians are evaluated along the reference extremal $\la(t)$.
    From \eqref{eq:diff-hs-h0-is-zero} we obtain point 3, i.e. $\vec  H_S(t,\la(t)) = \vec H(\la(t),u(t))$.
    \\[5pt]
    
    It only remains to prove that $H_S$ satisfies point 1.  
    For $\ell \in \Sigma$, we have
    \begin{align}
        H_S(t,\ell)
        =
        h_S (\ell),
        \\
        H_M(\ell)
        =
        h_0(\ell).
    \end{align}
    Since by construction $\mathcal{L}_t$ is contained in $\Sigma$, it is enough to prove that the difference $h_S - h_0$ is positive in a small neighbourhood of $\lambda(0) \in \Sigma\cap \mathcal{S}$ inside $\Sigma$.

    We have $(h_S - h_0)|_{\Sigma\cap \mathcal{S}} = 0$ by construction. Therefore, it is enough to study the local behaviour of $h_S - h_0$ along directions inside $\Sigma$ transversal to $\Sigma\cap \mathcal{S}$. Previously we have seen that $\vec h_c$ is transversal to $\Sigma \cap \mathcal S$ and preserves $\Sigma$. Hence, given $l\in \Sigma \cap \mathcal S$, it is enough to check that $\tau = 0$ is a local minimum of the function
    $$
    h_S(\exp(\tau \vh_c)(\ell)) -h_0(\exp(\tau \vh_c)(\ell)).
    $$
    Using the definition of $h_S$ and expanding into Taylor polynomial the second summand we find
    $$
    h_S(\exp(\tau \vh_c)(\ell)) 
    -
    h_0(\exp(\tau \vh_c)(\ell)) 
    = 
    h_0(\ell) 
    - 
    \left( 
        h_0(\ell) + \tau h_{c0}(\ell) + \frac{\tau^2}{2}h_{cc0}(\ell)
    \right) 
    + 
    o(\tau^2) 
    =  
    \frac{\tau^2}{2}h_{c0c}(\ell) + o(\tau^2).
    $$
    Since $h_{c0c}(\la_0)>0$ for $\ell$ in a suitable compact neighbourhood of $\la_0$ contained in $\Sigma$, we have the desired inequality of point 1 of the Theorem.
\end{proof}

\end{section}

\section{Jacobi equation and sufficient conditions for optimality}
\label{sec:suff-cond-optim}

In the last Section we proved that, under the strong generalized Legendre condition, short arcs of a singular extremal are locally strongly optimal. 
Now, we want to give a procedure to quantify how long these optimal singular arc can be.
To this aim, we introduce a suitable Jacobi equation and the notion of conjugate points. 
These concepts are classical in Riemannian geometry and they have been successfully generalized to some instances in optimal control theory, see \cite[Chapter 21]{AgSa}.

\subsection{Linear symplectic geometry and conjugate points}
\label{subsec_conjugate_points_jacobi_equation}

In the classical calculus of variations the Jacobi equation is essentially the linearization of the Hamiltonian equation along a given extremal $\lambda(t)$, and it is used for studying the behaviour of extremals in a neighbourhood of $\lambda(t)$. The Morse index theorem states that under the strong Legendre condition, the number of negative eigenvalues of the Hessian of the second variation can be computed by counting solutions to a certain boundary value problem involving the Jacobi equation. In the singular case, that we study, this is also true. However, definition of the Jacobi equation is more involved.

As before we fix a singular extremal $\lambda(t)$ with $\lambda(0) = \lambda_0$ that satisfies the strong generalized Legendre condition~\eqref{eq:SGLC}. For this section we will shorten the notation  value of the symplectic form at $\lambda_0$ as $\sigma\coloneqq \sigma_{\lambda_0}$. To define conjugate points we need to introduce some standard tools from symplectic geometry. For more details about this topic, we refer to~\cite{ac_silva}. 

Let $V$ be a $2n$-dimensional vector space and $\sigma$ a symplectic form on $V$. We say that a subspace $W\subset V$ is \textit{symplectic} if $\sigma|_W$ is non-degenerate. Contrary to that, we say that a subspace $\Lambda\subset V$ is  \textit{isotropic} if $\sigma_{|\Lambda}=0$. A linear subspace $\Lambda$ is called \textit{Lagrangian} if it is isotropic and maximal, which is equivalent to $\dim \Lambda = n$. Alternatively, we can define the\textit{ skew-orthogonal compliment} of $\Lambda$ as
$$
\Lambda^\angle = \{\lambda \in V: \sigma(\lambda,\mu)=0, \forall \mu \in \Lambda\}.
$$
A subspace $\Lambda$ is isotropic if and only if $\Lambda \subset \Lambda^\angle$ and Lagrangian if and only if $\Lambda^\angle = \Lambda$. 
Given a Lagrangian subspace $\Lambda$ and a isotropic subspace $\Gamma$ we can construct a new Lagrangian subspace such that it contains $\Gamma$ and has a maximal intersection with $\Lambda$:
\begin{equation}
\label{eq:lagrangian+isotropic=lagrangian}
    \Lambda^\Gamma := (\Lambda + \Gamma) \cap \Gamma^\angle.
\end{equation}

For the proofs of sufficient conditions we will need to understand the structure of the set of all Lagrangian subspaces of $V$, denoted by $L(V)$. It is an $(n+1)n/2$-dimensional compact submanifold of the Grassmannian, called the Lagrangian Grassmannian. A convenient choice of local coordinate on $L(V)$ is constructed as follows. 
    Take $\Pi\in L(V)$ and fix any $\Delta\in L(V)$ such that $\Delta\oplus\Pi=V$. 
    Then, there is a one-to-one correspondence between Lagrangian subspaces transversal to $\Delta$, which we will denote by $\Delta^\pitchfork$, and the set of symmetric matrices of dimension $n$, which we will denote by $\operatorname{Sym}(n)$. More precisely, if we choose Darboux coordinates for $(V,\sigma)$ such that
		\begin{equation}
			\Pi = \{(p,0)\}, \quad \Delta=\{(0,q)\}, 
                \quad p\in \R^{n*}, \, q\in \R^n
		\end{equation}
    then, for any Lagrangian subspace $\Lambda$ transversal to $\Delta$ there is a unique $S\in \operatorname{Sym}(n)$ such that
    \begin{equation}
        \Lambda = \{ (p,Sp) \mid p\in \Pi \}.
    \end{equation}

Now we can define the Jacobi equation and the conjugate points. 
Let $Z_t : T_{u(t)} U \to T_{\la_0}(T^*M)$ be the time-dependent map defined by
\begin{equation}
    \label{eq:def-Zt}
        Z_t v(t)
        \coloneqq
        \Big\langle
           v(t)
        ,
            \big[(\Phi_t ^{-1})_*
            \vh_I\big](\la_0)
        \Big\rangle,
\end{equation}
where $\Phi_t$ is the flow on $T^*M$ of the Hamiltonian system \eqref{eq:sing-ham-syst}. 
We can split $T_{u(t)}U = (\mathbb{R}h_I(t))\oplus (h_I(t))^\perp$ by defining $v(t) = \phi(t)h_I(t) + w(t)$.
To shorten the notation we will abbreviate $Z_I(t):=Z_th_I(t)$. Then, using the previously defined splitting, we define
$$
\mathcal{Z}_t v(t) \coloneqq Z_tw(t)-\dot Z_I(t)\phi(t).
$$
In addition, we define the Legendre form on $T_{u(t)}U \oplus \R$ as
\begin{equation}
\label{eq:def-lt-sec4}
    l_t(w,\phi)
    \coloneq
    \begin{pmatrix}
        w &\phi
    \end{pmatrix}\begin{pmatrix}
        \frac{\I}{r} & \sigma \big(Z_t \, \cdot , Z_I(t)\big) \\
        \sigma \big(Z_t \, \cdot , Z_I(t)\big)^T & \sigma \big(Z_I(t) , \dot Z_I(t)\big)
    \end{pmatrix}
    \begin{pmatrix}
        w \\
        \phi
    \end{pmatrix},
\end{equation}
where $w\in (h_I(t))^\perp$.
\begin{defn}
\label{def:conjugate_point_final_condition_on_the_fibre}
    Let $\lambda(t)$ be an extremal satisfying the strong generalized Legendre condition and denote by $\Pi_0 \coloneqq T_{\lambda_0}(T_{q_0}^*M)$. 
    A moment of time $T$ is conjugate to $0$ on $\lambda$ if and only if there is a non-constant solution to the following boundary value problem
\begin{gather}
	\label{eq:Jacobi1-rev}
    \dot \e
    =
    -\mathcal{Z}_t 
    l_t ^{-1}
    \sigma
        \Big(
            \mathcal{Z}_t
            \cdot
            \,,\,
            \e
        \Big)
    \quad 
    \text{ for a.e. } t\in[0,T],
    \\[5pt]
    \label{eq:Jacobi2-rev}
    \e(0) 
    \in 
    \Big(
        \Pi_0 + \R Z_I(0) 
    \Big) 
    \cap
    \big( 
        \R Z_I(0) 
    \big) ^\angle = \Pi_0^{\R Z_I(0) },
    \quad
    \e(T) \in \Pi_0.
\end{gather}
Equation~\eqref{eq:Jacobi1-rev} is called the \textit{Jacobi equation} along the extremal $\lambda$. 
Moreover, if $T$ is conjugate to $0$ on $\lambda$, we say that the point $q(T)$ is \emph{conjugate} to $q(0)$ along the trajectory $q$.
The multiplicity of a conjugate point is the number of linearly independent solutions of \eqref{eq:Jacobi1-rev},\eqref{eq:Jacobi2-rev}.
\end{defn}

We have the following property that will be used several times.

\begin{lem}
    \label{lem:Z_I(t)_is_a_solution}
    $\e_I(t)=Z_I(t)$ is a solution of \eqref{eq:Jacobi1-rev}.
\end{lem}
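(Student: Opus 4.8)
The plan is to verify directly that $\e_I(t) = Z_I(t)$ satisfies the Jacobi equation \eqref{eq:Jacobi1-rev}, using the structure of the map $\mathcal{Z}_t$ and the fact that $Z_I(t)$ is built from the flow of the singular Hamiltonian system \eqref{eq:sing-ham-syst}. The key observation is that the right-hand side of \eqref{eq:Jacobi1-rev} is $-\mathcal{Z}_t l_t^{-1}\sigma(\mathcal{Z}_t\,\cdot\,,\e)$, so I must show that when $\e = Z_I$, the vector $l_t^{-1}\sigma(\mathcal{Z}_t\,\cdot\,, Z_I(t))$ in $T_{u(t)}U \oplus \R$ is precisely the element whose image under $\mathcal{Z}_t$ equals $-\dot Z_I(t)$. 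Since $\mathcal{Z}_t(w,\phi) = Z_t w - \dot Z_I(t)\phi$, this amounts to identifying the candidate preimage as $(w,\phi) = (0,1)$, i.e. the pure $\phi$-direction.

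\textbf{Main steps.} First I would compute $\sigma(\mathcal{Z}_t v(t), Z_I(t))$ for an arbitrary $v(t) = \phi(t) h_I(t) + w(t)$, expanding via $\mathcal{Z}_t v = Z_t w - \dot Z_I \phi$; this gives $\sigma(Z_t w, Z_I(t)) - \phi\,\sigma(\dot Z_I(t), Z_I(t))$. Comparing with the matrix $l_t$ in \eqref{eq:def-lt-sec4}, I recognize this pairing as exactly the bilinear form $l_t$ applied to $(w,\phi)$ against the second basis block, i.e. $\sigma(\mathcal{Z}_t\,\cdot\,, Z_I(t))$ as a covector on $T_{u(t)}U\oplus\R$ is the row $\bigl(\sigma(Z_t\,\cdot\,,Z_I(t)),\ -\sigma(Z_I(t),\dot Z_I(t))\bigr)$, which is $l_t$ acting on $(0,-1)^T$ — up to checking signs against the definition of $l_t$, where the $(\phi,\phi)$ entry is $\sigma(Z_I,\dot Z_I)$. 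Hence $l_t^{-1}\sigma(\mathcal{Z}_t\,\cdot\,, Z_I(t)) = (0,-1)$ (or $(0,1)$ depending on sign conventions). Applying $\mathcal{Z}_t$ to this vector yields $\mathcal{Z}_t(0,-1) = -Z_t\cdot 0 - \dot Z_I(t)\cdot(-1)$... so the sign must be arranged so that $-\mathcal{Z}_t l_t^{-1}\sigma(\mathcal{Z}_t\,\cdot\,, Z_I(t)) = \dot Z_I(t)$, which is exactly $\dot\e_I(t)$. This closes the verification.

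\textbf{Sign and well-definedness issues.} The main obstacle will be bookkeeping the signs and making sure $l_t$ is genuinely invertible on the relevant space — the latter is where the strong generalized Legendre condition \eqref{eq:SGLC} enters, guaranteeing nondegeneracy of $l_t$ (the block $\I/r$ with $r>0$ handles the $w$-directions, and the Schur complement in the $\phi$-direction is controlled by $h_{c0c}>0$, as established in the derivation preceding this lemma). I would need to confirm that the covector $\sigma(\mathcal{Z}_t\,\cdot\,, Z_I(t))$, viewed through the identification furnished by $l_t$, lands on the $\phi$-axis — this is essentially the statement that $\sigma(Z_t w, Z_I(t))$ assembles into the off-diagonal block of $l_t$ exactly, which is true by the very definition \eqref{eq:def-lt-sec4}. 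The only genuinely substantive point is that $\sigma(\mathcal{Z}_t v, Z_I)$ equals the pairing of $(w,\phi)$ with the second column of $l_t$, and this is immediate from matching \eqref{eq:def-lt-sec4} term by term.

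\textbf{Boundary conditions.} Finally, although the lemma as stated only claims $\e_I = Z_I$ solves the Jacobi equation \eqref{eq:Jacobi1-rev} (not the full boundary value problem), I would remark — since it is used repeatedly later — that $Z_I(0) \in \R Z_I(0) \subset \Pi_0^{\R Z_I(0)}$ trivially, so $\e_I$ automatically satisfies the left endpoint condition \eqref{eq:Jacobi2-rev}; it is the right endpoint condition $Z_I(T)\in\Pi_0$ that fails generically and whose occasional validity detects conjugate points. This is a one-line observation but clarifies why $\e_I$ plays a distinguished role in the conjugate-point analysis. I expect the whole proof to be short once the matrix identifications are set up correctly; the real work is purely the sign-consistent matching between \eqref{eq:def-Zt}, the definition of $\mathcal{Z}_t$, and \eqref{eq:def-lt-sec4}.
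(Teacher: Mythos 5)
Your approach is exactly the paper's: identify the covector $\sigma(\mathcal{Z}_t\,\cdot\,, Z_I(t))$ with $l_t$ applied to a vector on the $\phi$-axis of $T_{u(t)}U\oplus\R$, invert $l_t$, and apply $-\mathcal{Z}_t$. The sign you hedge on does need settling, and it comes out as $(0,+1)$, not $(0,-1)$: expanding for $v=(w,\phi)$ gives
\begin{equation}
\sigma\big(\mathcal{Z}_t v, Z_I(t)\big)
= \sigma\big(Z_t w, Z_I(t)\big) - \phi\,\sigma\big(\dot Z_I(t), Z_I(t)\big)
= \sigma\big(Z_t w, Z_I(t)\big) + \phi\,\sigma\big(Z_I(t), \dot Z_I(t)\big),
\end{equation}
so the $\phi$-component of the covector is $+\sigma(Z_I,\dot Z_I)$, matching the second column of $l_t$ in \eqref{eq:def-lt-sec4} with no extra minus sign (your row has $-\sigma(Z_I,\dot Z_I)$, which confused the subsequent identification with $l_t(0,-1)^T$). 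Hence $l_t^{-1}\sigma(\mathcal{Z}_t\,\cdot\,, Z_I(t))=(0,1)$, and then $-\mathcal{Z}_t(0,1) = -\big(Z_t 0 - \dot Z_I(t)\cdot 1\big) = \dot Z_I(t) = \dot\e_I(t)$ without any further sign arranging. Your remark on the boundary conditions ($Z_I(0)\in\Pi_0^{\R Z_I(0)}$ automatically, while $Z_I(T)\in\Pi_0$ generically fails) is correct but an aside; the lemma only asserts that $Z_I$ solves the ODE \eqref{eq:Jacobi1-rev}.
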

\begin{proof}
If we take the variation $w(t)=0$ and $\phi(t)=1$, by definition of $l_t$ we have $l_t (0,1) = \sigma( Z_t \cdot_w , Z_I(t)) - \sigma( \dot Z_I(t) , Z_I(t)) = \sigma( \mathcal{Z}_t \cdot , Z_I(t))$. So 
\begin{equation}
    \label{eq:dot-zeta-I}
    -\mathcal{Z}_t 
    l_t ^{-1}
    \sigma
        \Big(
            \mathcal{Z}_t
            \cdot
            \,,\,
            Z_I(t)
        \Big)
    =
    -\mathcal{Z}_t (0,1)
    =
    \dot Z_I (t).
\end{equation}
\end{proof}

The Jacobi equation~\eqref{eq:Jacobi1-rev} is obtained by pulling back all of the information along an extremal. An explicit example involving Jacobi equation written in this form is given in Subsection~\ref{sec:SU(2)}, where we study a concrete example on $SU(2)$. One can also give an equivalent definition of the Jacobi equation using the previously introduced super-Hamiltonian $H_S$. To do this, one should note that the Jacobi equation is a linear Hamiltonian equation on $T_{\lambda_0}(T^*M)$ with a time-dependent quadratic Hamiltonian 
\begin{equation}
\label{eq:def-b_t}
b_t:=\frac{	    \big \langle 
		\sigma_{\la_0}(\mathcal Z _t \cdot , \e) 
		,
		l^{-1} _t  
		\sigma_{\la_0}(\mathcal Z _t \cdot , \e)
	    \big \rangle}{2}.
\end{equation}
It should be noted that invertibility of $l_t$ is guaranteed by the strong generalized Legendre condition (see Lemma~\ref{prop:equivalence_legendre}, that we prove after we introduce the second variation). So it is enough to rewrite this Hamiltonian in terms introduced in the previous sections.

\medskip

Let  $\Phi_t$ the flow of the hamiltonian vector field $\vec H(u(t),\cdot)$ and $\Phi^S _t$ the flow of $\vec H_S(t,\cdot)$.  Consider the composition 
    \begin{equation}
    	 \Phi_{-t} \circ \Phi^S_{t}.
    \end{equation}
    This is diffeomorphism of the cotangent bundle which fixes the point $\la_0$. 
    We want to write it as the flow of another Hamiltonian function, i.e.
    \begin{equation}
        \label{eq:def-phi-piccolo}
    	  \Phi_{-t} \circ \Phi^S_{t}
    	=
    	\overrightarrow{\exp}\int_0 ^t \vec\varphi_s \, ds.
    \end{equation} 
    It follows by the variation formula (see \cite{AgSa}) that this is 
    \begin{equation}
		\varphi_t (\ell)
		=
		\big( H_S(t,\cdot) - H(u(t), \cdot) \big)
		\circ
		\Phi _t \, (\ell),
		\quad 
		\ell\in T^*M, t\in[0,T].
    \end{equation}
    Indeed, we have
    \begin{equation}
        \overrightarrow{\varphi_t}
        =
        \overrightarrow{
            \big( H_S(t,\cdot) - H(u(t), \cdot) \big)
		\circ
		\Phi _t
        }
        =
        \left( \Phi _{-t} \right)_*
        \left ( \vec H_S(t,\cdot) - \vec H(u(t), \cdot) \right), 
    \end{equation}
    hence, by the twisted variation formula (see Equation (2.28) in \cite{AgSa}), we obtain
    \begin{align}
        \overrightarrow{\exp}\int_0 ^t \vec\varphi_s \, ds
        &=
        \overrightarrow{\exp} \int_0 ^t 
        \left( \Phi _{-s} \right)_* \vec H_S(s,\cdot) 
        - 
        \left( \Phi _{-s} \right)_* \vec H(u(s), \cdot)  
        \, ds
        =
        \\
        &=
        \overrightarrow{\exp} \int_0 ^t  
        - 
        \left( \Phi _{-s} \right)_* \vec H(u(s), \cdot)  
        \, ds
        \circ 
        \overrightarrow{\exp} \int_0 ^t  
        \left( \Phi _{-s} \right)_* \left( \Phi _s \right)_* \vec H_S(s,\cdot)
        \, ds
        =
        \\
        &=
        \left(
        \overrightarrow{\exp} \int_0 ^t
        \vec H(u(s), \cdot) \, ds  
        \right)^{-1}
        \circ
        \overrightarrow{\exp} \int_0 ^t
        \vec H_S (s,\cdot) \, ds
        .
    \end{align}
    We have the following relation with the Jacobi equation.
    \begin{prop}
    \label{prop:equivalence_jacobi_equations}
	Under the previous notations, we have that for $\lambda_0 \in \Sigma$
	\begin{equation}
	\label{eq:thesis-equiv}
	    \mathrm{Hess}_{\la_0} 
            \left[
                \big(
                    H_S(t,\cdot)  - H(u(t), \cdot)
                \big)
                \circ
                \Phi_t
            \right]
	    =
	    \big \langle 
		\sigma_{\la_0}(\mathcal Z _t \cdot , \e) 
		,
		l^{-1} _t  
		\sigma_{\la_0}(\mathcal Z _t \cdot , \e)
	    \big \rangle
	    \quad 
	    \e \in T_{\la_0} (T^*M).
	\end{equation}
    \end{prop}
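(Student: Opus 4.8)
The plan is to identify the left-hand side with a pulled-back Hessian of an explicit function on $T^*M$ and then match it with the right-hand side by unwinding the symplectic identities that define $Z_t$, $\mathcal Z_t$ and $l_t$.

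\emph{Step 1 (reduction to a Hessian at $\la(t)$).} Set $g_t := H_S(t,\cdot)-H(u(t),\cdot)$ and $\varphi_t := g_t\circ\Phi_t$. By points 2 and 3 of Theorem~\ref{thm:CritSuffCond} together with~\eqref{eq:diff-hs-h0-is-zero}, the function $g_t$ has a critical point at $\la(t)=\Phi_t(\la_0)$, so $\varphi_t$ has a critical point at $\la_0$ and $\mathrm{Hess}_{\la_0}\varphi_t$ is a well-defined quadratic form. Since $\Phi_t$ is a diffeomorphism, the chain rule at a critical point gives $\mathrm{Hess}_{\la_0}\varphi_t(\e)=\mathrm{Hess}_{\la(t)}g_t\big(d_{\la_0}\Phi_t\,\e\big)$ for every $\e\in T_{\la_0}(T^*M)$. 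So it suffices to compute $\mathrm{Hess}_{\la(t)}g_t$ and precompose it with the linear map $d_{\la_0}\Phi_t$.

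\emph{Step 2 (the Hessian of $g_t$).} Writing $u(t)=r(t)h_I(t)$ with $|h_I(t)|=1$, one finds $g_t=(h_S-h_0)+r(t)\big(|h_I|-\langle h_I(t),h_I\rangle\big)$, and the Hessian splits accordingly. For the term $h_S-h_0$: it vanishes on $\mathcal S$, has vanishing differential on $\mathcal S\cap\Sigma$ by~\eqref{eq:diff-hs-h0-is-zero}, and satisfies $\{h_S,h_c\}\equiv 0$; using these facts, $dh_{0c}(\vh_c)=h_{c0c}$, and the Taylor expansion $h_S(\exp(\tau\vh_c)\ell)-h_0(\exp(\tau\vh_c)\ell)=\tfrac{\tau^2}{2}h_{c0c}(\ell)+o(\tau^2)$ already obtained in the proof of Theorem~\ref{thm:suff-cond-small-time}, one checks that its Hessian at $\la(t)$ has no spurious cross-terms and equals the rank-one form $\e'\mapsto (dh_{0c}(\e'))^2/h_{c0c}(\la(t))$. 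For the term $|h_I|-\langle h_I(t),h_I\rangle$: it is the pullback through $\ell\mapsto h_I(\ell)\in\R^m$ of $x\mapsto |x|-\langle e,x\rangle$ with $e=h_I(t)$, $|e|=1$; at $x=e$ this function has vanishing gradient and Hessian $\I-ee^{T}$, the orthogonal projector onto $e^{\perp}$, so this summand contributes $r(t)$ times the squared norm of the $h_I(t)^{\perp}$-component of $dh_I(\cdot)$.

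\emph{Step 3 (translating the right-hand side).} Here I would unwind the definitions~\eqref{eq:def-Zt} and~\eqref{eq:def-lt-sec4}. Recognise $Z_I(t)=\big[(\Phi_t^{-1})_*\vh_c\big](\la_0)$ and $Z_te_j=\big[(\Phi_t^{-1})_*\langle e_j,\vh_I\rangle\big](\la_0)$ for an orthonormal basis $\{e_j\}$ of $h_I(t)^{\perp}$, and use the variation formula — exactly as it is used for $\varphi_t$ in the text — to write $\dot Z_I(t)$ as a pushforward of the Poisson-bracket field $\vh_{0c}$ plus a correction term which is a multiple of $Z_I(t)$ produced by the feedback term in~\eqref{eq:sing-ham-syst}. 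Since $\Phi_t$ is symplectic, this converts $\sigma\big(Z_t\,\cdot\,,\e\big)$ and $\sigma\big(\dot Z_I(t),\e\big)$ into $-dh_i\big(d_{\la_0}\Phi_t\,\e\big)$ and $-dh_{0c}\big(d_{\la_0}\Phi_t\,\e\big)$ up to such controlled corrections, and identifies the blocks of $l_t$: the top-left block $\tfrac{\I}{r}$, the off-diagonal block $\sigma(Z_te_j,Z_I(t))=\{h_j,h_c\}(\la(t))$, and the scalar $\sigma(Z_I(t),\dot Z_I(t))=h_{c0c}(\la(t))$. Then $\big\langle l_t^{-1}\sigma(\mathcal Z_t\,\cdot\,,\e),\sigma(\mathcal Z_t\,\cdot\,,\e)\big\rangle$ is a Schur complement of a symmetric $(m+1)\times(m+1)$ form; invertibility of $l_t$ along the way is guaranteed by~\eqref{eq:SGLC} through Lemma~\ref{prop:equivalence_legendre}, and completing the square — eliminating the non-degenerate $h_I(t)^{\perp}$-directions against the single remaining $\phi$-direction — reduces it to precisely the quadratic form found in Step 2.

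\emph{Main obstacle.} The delicate points are (i) proving that the Hessian of $h_S-h_0$ is genuinely rank one with no extra cross-terms, which is exactly where $\{h_S,h_c\}\equiv 0$ and~\eqref{eq:diff-hs-h0-is-zero} are used in an essential way; and (ii) keeping exact track, in Step 3, of the corrections caused by the feedback control $r(\la)$ in the flow $\Phi_t$ (these involve $\{h_c,r\}$ and only partly cancel inside the symplectic pairings), followed by the block-matrix algebra matching the Schur complement to the split Hessian of Step 2. These are the long bookkeeping computations that I would relegate to the appendix.
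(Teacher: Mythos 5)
Your proposal and the paper's Appendix~B proof share the same skeleton: both reduce by the chain rule to the Hessian of $g_t = (h_S-h_0) + r(t)\big(|h_I| - \langle h_I(t),h_I\rangle\big)$ at $\la(t)$, and both recognize that the two summands account, respectively, for the scalar $h_{c0c}$ contribution and for the $\I/r$ block of $l_t$. The execution differs, though. The paper picks the splitting $T_{\la_0}(T^*M)=\R Z_I(t)\oplus(\Phi_t^{-1})_*T_{\la(t)}\mathcal{S}$ and simply checks the identity on the three kinds of basis pairs, computing each side directly. You instead propose closed-form expressions. Your Step 2 is a genuinely nicer formulation than the paper's: the Hessian of $h_S-h_0$ at $\la(t)$ equals the rank-one form $\e'\mapsto (d_{\la(t)}h_{0c}[\e'])^2/h_{c0c}(\la(t))$, which is easily verified on $\R\vh_c\oplus T_{\la(t)}\mathcal{S}$ precisely because $h_S\equiv h_0$ on $\mathcal{S}$ and $\{h_S,h_c\}\equiv 0$; and the second summand contributes $r(t)$ times the projector $\I-h_I h_I^T$ pulled back through $d_{\la(t)}h_I$, which is exactly what the paper writes in~\eqref{eq:hess-ham-pullb}. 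Step 3 (the Schur-complement route through $l_t^{-1}$) is plausible, and the paper itself uses the Schur complement of $l_t$ in proving Proposition~\ref{prop:equivalence_legendre}.

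However, Step 3 as written has two concrete errors. First, $\sigma(Z_I(t),\dot Z_I(t))$ is \emph{not} $h_{c0c}(\la(t))$: by~\eqref{eq:comp-form1} it equals $h_{c0c}(\la(t))+r(\la(t))|h_{cI}(\la(t))|^2$. It is the \emph{Schur complement} of $l_t$ with respect to its $\I/r$ block (cf.~\eqref{eq:sec-var-def-pos}) that equals $h_{c0c}$, and that is the quantity you need to match against the rank-one form of Step 2; conflating it with the lower-right entry will derail the bookkeeping, in particular when eliminating the $h_I^\perp$-directions against the $\phi$-direction. Second, the ``correction term'' in $\dot Z_I(t)$ is not a multiple of $Z_I(t)$: from the computation in the proof of Proposition~\ref{prop:equivalence_legendre}, $\dot Z_I(t)=(\Phi_t^{-1})_*\big(\vh_{0c}+r(t)\sum_{i,j}h_j h_{ji}\vh_i\big)$, whose extra term is $r\,Z_t h_{cI}$, not $r\,Z_t h_I$; likewise the flow $\Phi_t$ is the flow with the fixed control $u(t)$ rather than with the feedback $r(\la)$, so no $\{h_c,r\}$ term arises. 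Neither error is fatal to the strategy, but as posed your Step 3 would not close without these corrections, and that is precisely the case-by-case computation the paper does carry out in Appendix~\ref{app:tranf-jac-eq}.
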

The proof of this proposition is given in Appendix~\ref{app:tranf-jac-eq}. 

\medskip

Finally, we note that there is an important connection between the Legendre condition in Theorem~\ref{thm:suff-cond-small-time} and the Legendre form $l_t$.
\begin{prop}
\label{prop:equivalence_legendre}
Given a singular extremal $\la(t)$, we have the following equivalence:
$$
l_t \geq 0 \iff h_{c0c}(t) \geq 0.
$$
and a similar equivalence holds for strong inequalities. 
\end{prop}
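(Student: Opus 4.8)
The plan is to exploit the block structure of the matrix appearing in~\eqref{eq:def-lt-sec4} and reduce the matrix inequality $l_t\ge 0$ to a scalar one by a Schur complement; the resulting scalar should then turn out to be exactly $h_{c0c}(t)$ once everything is rewritten in terms of nested Poisson brackets along $\la(t)$.

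First, since $r(t)=|u(t)|>0$ on a singular arc (this is implicit in the very definition of $\I/r$), the $(h_I(t))^\perp$-block $\I/r$ of the matrix of $l_t$ is positive definite. Writing that matrix as $\left(\begin{smallmatrix}\I/r & b\\ b^{T}&c\end{smallmatrix}\right)$ with $b=\sigma(Z_t\,\cdot\,,Z_I(t))|_{(h_I(t))^{\perp}}$ and $c=\sigma(Z_I(t),\dot Z_I(t))$, the Schur complement criterion gives
\begin{equation*}
    l_t\ge0\iff s_t:=c-r\,\lVert b\rVert^{2}\ge0,
\end{equation*}
and likewise with the strict inequalities throughout. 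So the statement reduces to the scalar identity $s_t=h_{c0c}(t)$.

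To obtain this I would expand along $\la(t)$. Setting $g_j(t):=[(\Phi_t^{-1})_{*}\vec h_j](\la_0)$, one has $Z_I(t)=\sum_j h_j g_j$ and $\dot Z_I(t)=\sum_j(\dot h_j g_j+h_j\dot g_j)$ with $\dot h_j=h_{0j}+r\,h_{cj}$. The reference flow $\Phi_t$ (the flow of $\vec H(u(t),\cdot)=\overrightarrow{h_0+\langle u(t),h_I\rangle}$) is symplectic, so $\sigma_{\la_0}(g_j,g_k)=h_{jk}(t)$ and $\sigma_{\la_0}(g_j,\dot g_k)=h_{j0k}(t)+\sum_\ell u_\ell(t)h_{j\ell k}(t)$ with $u_\ell(t)=r(t)h_\ell(t)$. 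Substituting these and using $\sum_j h_jh_{jk}=h_{ck}$, $\sum_k h_kh_{jk}=-h_{cj}$, $\langle h_I,h_{cI}\rangle=\{h_c,h_c\}=0$ (so $h_{cI}\in(h_I)^{\perp}$ and $\lVert b\rVert^{2}=\sum_k h_{ck}^{2}$), and the antisymmetry of $h_{c\ell k}=\{h_c,\{h_\ell,h_k\}\}$ in $\ell,k$ (which kills the $\sum_\ell u_\ell(\cdot)$ contribution to $\sigma(Z_I,\dot Z_I)$ after contraction with $h_jh_k$), I expect
\begin{equation*}
    \sigma(Z_I(t),\dot Z_I(t))=\sum_k h_{0k}h_{ck}+r\sum_k h_{ck}^{2}+\sum_k h_kh_{c0k},\qquad s_t=\sum_k h_{0k}h_{ck}+\sum_k h_kh_{c0k}.
\end{equation*}
Finally, expanding $h_{c0c}=\{h_c,\{h_0,h_c\}\}$ with $h_c=\tfrac12\sum_kh_k^{2}$ via the Leibniz rule gives $h_{c0c}=\sum_k h_{0k}h_{ck}+\sum_k h_kh_{c0k}$, hence $s_t=h_{c0c}(t)$, which settles both equivalences.

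The hard part is the bookkeeping in the previous paragraph. Two points need care: one has to write the linearized reference dynamics with the \emph{open-loop} Hamiltonian $\langle u(t),h_I\rangle$ rather than $r\,h_c$, because it is precisely the term $\sum_\ell u_\ell(t)h_{j\ell k}$ and its vanishing by antisymmetry that make the $r$-dependent pieces of $\sigma(Z_I,\dot Z_I)$ collapse; and one must then check that the single surviving $r$-term, $r\sum_k h_{ck}^{2}$, is exactly the quantity subtracted by the Schur complement. The low-dimensional case $m=1$ should be recorded separately, but it is immediate: there $h_{cj}\equiv0$, $b=0$, and $l_t(\phi)=\phi^{2}h_{c0c}(t)$.
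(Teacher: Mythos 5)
Your proposal is correct and takes essentially the same route as the paper: a Schur complement reduction of $l_t$ (using $r>0$ and $h_{cI}\perp h_I$) to the scalar $\sigma(Z_I,\dot Z_I)-r\|\sigma(Z_t\cdot,Z_I)\|^2$, followed by the same Poisson-bracket bookkeeping for $\dot Z_I$ in which the control-dependent term cancels by antisymmetry, yielding $h_{c0c}$. The only difference is presentational — you expand $Z_I=\sum_j h_j g_j$ and contract $\sigma(g_j,\dot g_k)$ explicitly, whereas the paper first writes $\dot Z_I=(\Phi_t^{-1})_*(\vh_{0c}+r\sum_{i,j}h_jh_{ji}\vh_i)$ and keeps $h_{c0c}$ unexpanded — so there is nothing to flag.
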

The proof is lengthy computation that we postpone to Subsection~\ref{subsec:legendre_proof}. 
An important corollary of this proposition is that the strong Legendre condition ensures that the Hamiltonian $b_t$ in \eqref{eq:def-b_t} is non-negative. 
We will use this fact in the proof of Theorem~\ref{thm:suff-cond-no-conj-points} presented in the next subsection.

\begin{subsection}{Sufficient condition for optimality}
\label{subsec:suff_conditions}
    This Subsection is entirely devoted to the proof of Theorem \ref{thm:suff-cond-no-conj-points}. The argument follows closely the idea of the proof of Theorem 21.3 of \cite{AgSa}. 
    
    We recall  the following Lemma, which is proved in \cite{AgSa}. It will play a crucial role in the proof of Theorem \ref{thm:suff-cond-no-conj-points}.
    \begin{lem}
    \label{lem:vel-lagr-curve}
		Let $V$ be a symplectic space and let $h_t$ be a time dependent quadratic Hamiltonian function on $V$. Define $(\phi_t)_{t\in\R}$ the flow of $h_t$. 
        For a given $\Lambda_0\in L(V)$, let $\Lambda_t = \phi_t(\Lambda_0)$. 
        Suppose moreover that $\Lambda_t \cap \Delta =\{0\}$ for every $t\in[0,T]$ and let $\Lambda_t = \{ (p , S_t p) \mid p \in \Pi \}$,  $ S_t\in \operatorname{Sym}(n)$. 
        Then 
		\begin{equation}
			\dot S_t(p) = 2 h_t (p, S_t p), \quad p \in \Pi. 
		\end{equation}   
        In particular, if $h_t$ is positive definite for every $t\in[0,T]$, then also $\dot S_t$ is positive definite for every $t\in[0,T]$.
	\end{lem}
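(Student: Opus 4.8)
The plan is to reduce the whole statement to a one-line computation in Darboux coordinates adapted to the splitting $V=\Pi\oplus\Delta$. First I would record the two elementary facts about quadratic Hamiltonians that make everything work. Since $h_t$ is a quadratic form, its Hamiltonian vector field $\vec h_t$ (defined by $dh_t=\sigma(\cdot,\vec h_t)$) is linear; hence its flow $\phi_t$ is linear and carries Lagrangian subspaces to Lagrangian subspaces, so the curve $\Lambda_t=\phi_t(\Lambda_0)$ is indeed a curve in $L(V)$ and the coordinates $S_t$ are well defined under the transversality hypothesis $\Lambda_t\cap\Delta=\{0\}$. The second fact is Euler's homogeneity identity $\langle d_x h_t,x\rangle=2h_t(x)$, which, combined with $dh_t=\sigma(\cdot,\vec h_t)$, gives $\sigma(\vec h_t(x),x)=-\langle d_xh_t,x\rangle=-2h_t(x)$ for every $x\in V$.

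Next I would fix the parametrization of $\Lambda_t$. For $v_0\in\Lambda_0$ put $v(t)=\phi_t(v_0)\in\Lambda_t$ and write $v(t)=(p(t),q(t))$ in Darboux coordinates with $\Pi=\{(p,0)\}$, $\Delta=\{(0,q)\}$. Transversality of $\Lambda_t$ to $\Delta$ forces $q(t)=S_tp(t)$, and the assignment $v_0\mapsto p(t)$ is a linear isomorphism $\Lambda_0\to\Pi$ for each fixed $t$, so $p(t)$ sweeps out all of $\Pi$ as $v_0$ varies over $\Lambda_0$. Differentiating the identity $q(t)=S_tp(t)$ yields $\dot q(t)=\dot S_tp(t)+S_t\dot p(t)$.

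The core computation is to evaluate $\sigma(\dot v(t),v(t))$ in two ways. On the one hand $\dot v=\vec h_t(v)$, so by the identity above $\sigma(\dot v,v)=-2h_t(v(t))$. On the other hand, substituting $\dot v(t)=(\dot p,\dot S_tp+S_t\dot p)$ and $v(t)=(p,S_tp)$ into the Darboux expression for $\sigma$ and using that $S_t$ is symmetric, the terms involving $\dot p$ cancel and one is left with $\sigma(\dot v,v)=-\langle\dot S_tp,p\rangle$. Equating the two expressions gives $\langle\dot S_tp(t),p(t)\rangle=2h_t(p(t),S_tp(t))$, and since $p(t)$ runs over all of $\Pi$ this is precisely the asserted identity of quadratic forms $\dot S_t(p)=2h_t(p,S_tp)$ on $\Pi$. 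The final claim is then immediate: if $h_t$ is positive definite on $V$, then for $p\neq0$ the point $(p,S_tp)$ is nonzero, hence $h_t(p,S_tp)>0$, so $\dot S_t$ is positive definite.

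The only delicate point is bookkeeping the sign conventions — the precise form of $\sigma$ in the chosen Darboux coordinates, the sign in $dh=\sigma(\cdot,\vec h)$, and the factor $2$ coming from homogeneity — so that the constant in the final formula agrees with the normalization used in \cite{AgSa}; none of the individual steps is conceptually hard. Alternatively one can argue intrinsically by differentiating $\sigma(v(t),w(t))\equiv0$ for two trajectories $v,w\in\Lambda_t$, but the coordinate computation above is the most direct route to the explicit formula $\dot S_t(p)=2h_t(p,S_tp)$.
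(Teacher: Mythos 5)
The paper gives no proof of this lemma at all — it states it and cites it directly from \cite{AgSa} — so there is no in-paper argument to compare against; your coordinate computation is correct and is essentially the standard argument found in that reference. The key steps (parametrize $\Lambda_t$ by $v(t)=(p(t),S_t p(t))$, compute $\sigma(\dot v,v)$ once as $\sigma(\vec h_t(v),v)=-2h_t(v)$ via Euler's identity and once in Darboux coordinates where the $\dot p$-terms cancel by symmetry of $S_t$, then equate) are all carried out correctly, and your caution about sign conventions is the right thing to flag: with the paper's convention $dh=\sigma(\cdot,\vec h)$ and Darboux form $\sigma((p_1,q_1),(p_2,q_2))=\langle p_1,q_2\rangle-\langle p_2,q_1\rangle$ the constant indeed comes out as $+2$, matching the statement.
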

    We recall the statement of the Theorem that we are going to prove.
    \begin{thm}
        Let $\la(t)$, $t\in[0,T]$, be a singular extremal for Problem \ref{prob:OCP}. 
        Assume furthermore that it satisfies the strong generalized Legendre condition \eqref{eq:SGLC} and that the Jacobi equation \eqref{eq:Jacobi1-rev},\eqref{eq:Jacobi2-rev} has no non-trivial solution. 
        Then $\la(t)$ is strictly locally strongly optimal.  
    \end{thm}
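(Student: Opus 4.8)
The strategy is to use the super-Hamiltonian machinery of Theorem~\ref{thm:CritSuffCond} (Stefani--Zezza), exactly as in the proof of Theorem~\ref{thm:suff-cond-small-time}, but now the time horizon $[0,T]$ is fixed and not necessarily short, so the obstruction is precisely point~4 of that theorem: the projection $\mathrm{id}\times\pi$ must be a diffeomorphism on a neighbourhood of the whole extremal $\lambda(\cdot)$. In the short-time argument this was automatic near $t=0$; here it must be deduced from the absence of conjugate points. So the proof splits into two parts: (i) build the same Lagrangian manifold $\mathcal L_0$ and the same time-dependent Hamiltonian $H_S$ as before, reducing point~4 to the statement that the Lagrangian curve $\mathcal L_t = \Phi^S_t(\mathcal L_0)$ stays transversal to the vertical fibres $\Pi_t = T_{\lambda(t)}(T^*_{q(t)}M)$ for all $t\in[0,T]$; and (ii) show that this transversality is equivalent to the absence of conjugate points in the sense of Definition~\ref{def:conjugate_point_final_condition_on_the_fibre}.

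\textbf{Step (i): reduction to a transversality statement.} I would first recall that the non-degeneracy of $\mathrm{id}\times\pi$ at a point $(t,\ell)$ with $\ell\in\mathcal L_t$ is equivalent, by the block-triangular form of its differential already exhibited in the proof of Theorem~\ref{thm:suff-cond-small-time}, to $d_{\ell}\pi$ being an isomorphism from $T_\ell\mathcal L_t$ onto $T_{q(t)}M$, i.e.\ to $T_\ell\mathcal L_t \cap \ker d_\ell\pi = \{0\}$, i.e.\ to transversality of the Lagrangian plane $T_\ell\mathcal L_t$ with the vertical Lagrangian plane. Since $\mathcal L_0$ was constructed transversal to the fibre at $\lambda_0$ (it is a graph $\{(q,d_qa)\}$), and since transversality is an open condition and the curves are continuous, it suffices to check it along the reference extremal itself, i.e.\ that $T_{\lambda(t)}\mathcal L_t \pitchfork \Pi_t$ for every $t\in[0,T]$. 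Pushing everything back by $\Phi_{-t}$ (the flow of $\vec H(u(t),\cdot)$), using $\Phi_{-t}\circ\Phi^S_t = \overrightarrow{\exp}\int_0^t \vec\varphi_s\,ds$ from \eqref{eq:def-phi-piccolo} and Proposition~\ref{prop:equivalence_jacobi_equations}, this becomes a statement about the linear Hamiltonian flow generated by the Jacobi Hamiltonian $b_t$ in \eqref{eq:def-b_t} on the fixed symplectic space $T_{\lambda_0}(T^*M)$: namely that the Lagrangian curve obtained by evolving $\Pi_0^{\R Z_I(0)}$ (the pulled-back initial condition, cf.\ \eqref{eq:Jacobi2-rev}) by this flow never meets $\Pi_0$ for $t\in(0,T]$. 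One subtlety I would have to handle carefully is the mismatch between $\mathcal L_0 \subset \Sigma$ and the Jacobi boundary condition living in $\Pi_0^{\R Z_I(0)}$ rather than in $\Pi_0$ itself; this is the role of the $\Lambda^\Gamma$ construction \eqref{eq:lagrangian+isotropic=lagrangian} with $\Gamma = \R Z_I(0)$, and of Lemma~\ref{lem:Z_I(t)_is_a_solution} which guarantees $Z_I(t)$ stays in the evolved plane so that intersecting with $(\R Z_I(t))^\angle$ is consistent in time.

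\textbf{Step (ii): conjugate points $\Longleftrightarrow$ loss of transversality, via monotonicity.** Here I would invoke Proposition~\ref{prop:equivalence_legendre}: the strong Legendre condition \eqref{eq:SGLC}, i.e.\ $h_{c0c}(t)>0$, makes $l_t$ positive definite, hence the Jacobi Hamiltonian $b_t$ is non-negative (a quadratic form $\langle \sigma(\mathcal Z_t\cdot,\eta), l_t^{-1}\sigma(\mathcal Z_t\cdot,\eta)\rangle/2 \geq 0$). By Lemma~\ref{lem:vel-lagr-curve}, as long as the Lagrangian curve $\Lambda_t$ stays transversal to a fixed complement $\Delta$ it is represented by symmetric matrices $S_t$ with $\dot S_t \geq 0$, i.e.\ the curve is monotone non-decreasing in the Lagrange--Grassmannian. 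Monotone Lagrangian curves have a well-behaved intersection theory with a fixed Lagrangian ($\Pi_0$): once they move away transversally they cannot come back. The absence of non-trivial solutions to the Jacobi boundary value problem \eqref{eq:Jacobi1-rev}--\eqref{eq:Jacobi2-rev} says exactly that $\Lambda_t \cap \Pi_0 = \{0\}$ for $t\in(0,T]$; combined with monotonicity and the fact that for small $t>0$ transversality holds (the short-time case, Theorem~\ref{thm:suff-cond-small-time}), an open--closed / continuity argument upgrades this to transversality to \emph{every} fibre $\Pi_t$ along the whole curve, not merely $\Pi_0$ — this is the standard trick of noting that $\Pi_t$ is itself the pushforward of $\Pi_0$ and that conjugacy at an intermediate time would produce a conjugate point. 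Feeding this back into point~4 of Theorem~\ref{thm:CritSuffCond} and checking points~1--3 verbatim as in the proof of Theorem~\ref{thm:suff-cond-small-time} (these are local in nature and already established there, the only genuinely new input being the global-in-time transversality) yields strict local strong optimality.

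\textbf{Main obstacle.** The delicate point is Step~(ii)'s passage from ``no conjugate time on $(0,T]$'' (a condition phrased with the single fibre $\Pi_0$ and the shifted initial plane $\Pi_0^{\R Z_I(0)}$) to ``$\mathcal L_t$ transversal to $\Pi_t$ for all $t$'' (the condition actually needed in Theorem~\ref{thm:CritSuffCond}). This requires (a) correctly matching the boundary conditions of the Jacobi problem to the geometry of $\mathcal L_0$, using the $\Lambda^\Gamma$ reduction and Lemma~\ref{lem:Z_I(t)_is_a_solution}; and (b) exploiting the monotonicity furnished by $b_t\geq0$ together with the short-time result to run the continuation argument. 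Everything else is a faithful adaptation of the proof of Theorem~21.3 in \cite{AgSa}, with the $L^1$-specific degeneracy of the Legendre form absorbed into the already-established Propositions~\ref{prop:equivalence_jacobi_equations} and~\ref{prop:equivalence_legendre}.
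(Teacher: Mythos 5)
Your overall strategy is the correct one and matches the paper's: invoke the Stefani--Zezza criterion (Theorem~\ref{thm:CritSuffCond}), reduce point~4 to transversality of the Lagrangian curve $\mathcal L_t$ with the fibres, pull everything back by $\Phi_{-t}$ to the linearized flow $B_t = \left(\overrightarrow{\exp}\int_0^t\vec\varphi_s\,ds\right)_*$ with quadratic Hamiltonian $b_t \geq 0$ (via Propositions~\ref{prop:equivalence_jacobi_equations} and~\ref{prop:equivalence_legendre}), and use monotonicity from Lemma~\ref{lem:vel-lagr-curve}. However, there is a genuine gap in Step~(ii), and you have also slightly misdiagnosed the ``main obstacle''.

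First, the $\Pi_t$ versus $\Pi_0$ issue you flag is not actually the hard part: since $\Phi_t$ is the lift of a flow on $M$, $\Pi_t = (\Phi_t)_*\Pi_0$, so after the pullback by $\Phi_{-t}$ the condition $T_{\lambda(t)}\mathcal L_t \pitchfork \Pi_t$ becomes literally $B_t L_0 \cap \Pi_0 = \{0\}$. No ``upgrade'' is needed. Second, your claim that ``the Lagrangian curve obtained by evolving $\Pi_0^{\R Z_I(0)}$ $\ldots$ never meets $\Pi_0$ for $t\in(0,T]$'' is false: $\Lambda_0 = \Pi_0^{\R Z_I(0)}$ contains the $(n-1)$-dimensional subspace $\Pi_0 \cap (\R Z_I(0))^\angle$, and the constant vertical solutions $C_t \subset B_t\Lambda_0 \cap \Pi_0$ may persist for positive times. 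The no-conjugate-point hypothesis only says the intersection is exactly $C_t$, not that it vanishes, and these constant solutions must be explicitly quotiented out or otherwise removed --- this is what drives the paper's case split between $C_t = \{0\}$ and $C_t \neq \{0\}$ and its use of Lemma~\ref{lem:AgSa21.2}.

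The real obstacle is that $\Lambda_0$ is \emph{not} transversal to $\Pi_0$, so it cannot serve as $T_{\lambda_0}\mathcal L_0$: you have to \emph{construct} a nearby Lagrangian plane $L_0$ transversal to $\Pi_0$, tangent to $\Sigma$, and such that $B_t L_0 \cap \Pi_0 = \{0\}$ for \emph{all} $t\in[0,T]$. A generic perturbation of $\Lambda_0$ fails this at arbitrarily small $t>0$; the fix is to perturb on the ``positive side''. Your phrase ``once they move away transversally they cannot come back'' does not supply this --- monotone Lagrangian curves do in general re-intersect a fixed plane; what monotonicity guarantees is that all crossings happen in the same direction, which is precisely why the choice of side for the initial perturbation matters. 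Concretely, the paper: (i) picks a $(2n-2)$-dimensional symplectic complement $\Gamma(t)\subset T_{\lambda_0}\Sigma$ of $\R Z_I(t)\oplus\R\zeta(t)$; (ii) chooses $\bar L_0 \subset \Gamma(0)$ represented by a small $S_0 > 0$ in the $(\bar\Pi_0, H_0)$ chart; (iii) uses $b_t \geq 0$ and Lemma~\ref{lem:vel-lagr-curve} to get $\dot S_t \geq 0$, hence $S_t > 0$ and no crossing of $\bar\Pi_0$ for small $t$; (iv) uses a compactness argument on $[\eps,T]$ together with the no-conjugate-points hypothesis; (v) finally sets $L_0 = \bar L_0 \oplus \R Z_I(0)$. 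Your proposal gestures at (a) and (b) in your ``Main obstacle'' paragraph but does not carry them out, and this construction is the actual content of the proof.
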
 
	\begin{proof}
            We want to apply Theorem \ref{thm:CritSuffCond}.
            We already saw in Section \ref{sec:super-ham} how to define the overmaximized-Hamiltonian $H_S$. 
            So, to prove the Theorem, we have to find a suitable function $a\in C^\infty(M)$ such that, denoting $\mathcal L_0 = \{ (q,d_q a) \in T^* M\}$ the graph of $da$ and $\EL_t \coloneqq \Phi_t ^S(\EL_0)$, the canonical projection 
		\begin{equation}
			\pi_t : \mathcal{L}_t \to M
		\end{equation}
            is a smooth local diffeomorphism from a neighbourhood of $\la_t$ onto 
            its image for any $t\in[0,T]$ and $T_{\la_0}\cL_0 \subset T_{\la_0} \Sigma$. 
            This is equivalent to require that the tangent space $T_{\la_t}\mathcal{L}_t$ has zero intersection with the vertical space $\Pi_0$:
		\begin{equation}
			\label{eq:inters1}
			T_{\la_t}\mathcal{L}_t
			\cap 
			T_{\la_t} \big( T^* _{q(t)} M \big)
			=
			\{0\},
			\quad t\in [0,T].
		\end{equation}
		Define $T_{\la_0}\mathcal{L}_0 = L_0$. 
        Recall that
		$ T_{\la_t}\mathcal{L}_t = (\Phi_t ^S)_* L_0 $.
		  Also, since $\Phi_t$ is the lift of a flow on $M$, we have 
        $ T_{\la_t} \big( T^* _{q(t)} M \big) = (\Phi_t)_* T_{\la_0}\big( T^* _{q_0} M \big) $.
		Moreover, we know that 
		$
            \Phi_{t} ^S
            =
            \Phi _t 
            \circ 
            \overrightarrow{\exp} \int_0 ^t \vec \varphi_s ds
		$,
        where $\varphi_t$ was defined in Equation \eqref{eq:def-phi-piccolo}.
		So, putting everything together, from \eqref{eq:inters1} we obtain
		\begin{align}
			(\Phi_t^S)_* L_0
			\cap
			(\Phi_t)_* \Pi_0
			&=
			\{0\},
			\quad t\in [0,T],
			\\[5pt]
			(\Phi_t)_* B_t L_0
			\cap
			(\Phi_t)_* \Pi_0
			&=
			\{0\},
			\quad t\in [0,T],
			\\
			B_t L_0
			\cap
			\Pi_0
			&=
			\{0\},
			\quad t\in [0,T],
		\end{align}
		where 
		\begin{equation}
			B_t
			=
			\left(
				\overrightarrow{\exp} \int_0 ^t \vec \varphi_s ds
			\right) _* .
		\end{equation}
		Recall also that $d_{\la_0} ^2 \varphi_t$ is the Hamiltonian function $b_t$ of Jacobi equation.
		So, we have 
		\begin{equation}
			\left(
			\overrightarrow{\exp} \int_0 ^t \vec \varphi_s ds
			\right) _*
			=
			\overrightarrow{\exp} \int_0 ^t \vec b_s ds.
		\end{equation} 
		Now, we know, by assumption, that there are no conjugate point along the trajectory $\la_t$, that is, there are no non-trivial solution 
		\begin{equation}
			B_t \Lambda_0 \cap \Pi_0 = C_t, \quad t\in (0,T],
		\end{equation}
		where $C_t$ is the space of constant vertical solutions and $\Lambda_0$ is the linear subspace of initial condition for Jacobi equation \eqref{eq:Jacobi2-rev}. 
		
		We first consider the case of $C_t=\{0\}$. Then, we will see how we can reduce the general case to this simpler one. If $C_t=\{0\}$, the idea is the following: once we fix $\eps$ small enough, in every neighbourhood of $\Pi_0$ it is possible to find a suitable Lagrange subspace $L_0$ for which there are no conjugate points for $t<\eps$:
        \begin{equation}
            \label{eq:choice-of-L0}
            B_t L_0 \cap \Pi_0 =\{0\} \quad \text{for } t\in[0,\eps].
        \end{equation}
        Notice that, for every $\eps > 0$ and any neighbourhood $\mathcal{V}$ of $\Lambda_0$, there are Lagrangian subspaces $L\in\mathcal{V}$ such that $B_t L \cap \Pi_0 \neq \{0\}$ with $t<\eps$ (see Figure \ref{fig:choice-L0}), so $L_0$ must be chosen carefully.
        Once we have chosen $L_0$ in a sufficiently small neighbourhood of $\Lambda_0$ and satisfying \eqref{eq:choice-of-L0}, the continuity of the flow $B_t$ ensures that there are no conjugate points also for $t\in[\eps,T]$. 
        \begin{figure}
            \centering
            \begin{subfigure}{0.4\linewidth}
                \begin{tikzpicture}
    \draw[->] (-3,0) -- (3,0) node[anchor=north] {$H$};
    \draw[->] (0,-3) -- (0,3) node[yshift=0.5cm] {$\Pi_0 \cap \Lambda_0$};
    

    \draw[thick, blue,domain=-2.5:2.5] plot ({\x*cos(70)},{\x*sin(70)}) ;
    \draw[thick, blue,domain=-2.5:2.5] plot ({\x*cos(110)},{\x*sin(110)}) ;
    \draw (-0.5,2.7) node[above] {$\mathcal{V}$} ;
    \draw [blue,dashed,domain=70:110] plot ({2.5*cos(\x)}, {2.5*sin(\x)});

    \draw[thick, green , domain=-2.5:2.5] plot ({\x*cos(80)},{\x*sin(80)}) ;
    \draw[thick, green ] ({2.5*cos(80)},{2.5*sin(80)}) node[above] {$L_0$} ;
    \draw [<-, green , domain=60:80] plot ({2.25*cos(\x)}, {2.25*sin(\x)});
    
    \draw[blue, thin] (0,0) circle [radius=2];
    
    \fill (0,0) circle (1.5pt) node[anchor=north east] {$O$};
\end{tikzpicture}
                \caption{Good choice of $L_0$, corresponding to $S_0>0$}
            \end{subfigure}
            \hfill
            \begin{subfigure}{0.4\linewidth}
                \begin{tikzpicture}
    \draw[->] (-3,0) -- (3,0) node[anchor=north] {$H$};
    \draw[->] (0,-3) -- (0,3) node[above] {$\Pi_0 \cap \Lambda_0$};
    

    \draw[thick, blue,domain=-2.5:2.5] plot ({\x*cos(70)},{\x*sin(70)}) ;
    \draw[thick, blue,domain=-2.5:2.5] plot ({\x*cos(110)},{\x*sin(110)}) ;
    \draw (0.5,2.7) node {$\mathcal{V}$} ;
    \draw [blue,dashed,domain=70:110] plot ({2.5*cos(\x)}, {2.5*sin(\x)});

    \draw[thick, red ,domain=-2.5:2.5] plot ({\x*cos(100)},{\x*sin(100)}) ;
    \draw[thick, red ] ({2.5*cos(100)},{2.5*sin(100)}) node[above] {$L_0$};
    \draw [<-, red ,domain=80:100] plot ({2.25*cos(\x)}, {2.25*sin(\x)});
    \fill[red] (0,2.25) circle (1.5pt);

    \draw[blue, thin] (0,0) circle [radius=2];
    
    \fill (0,0) circle (1.5pt) node[anchor=north east] {$O$};
\end{tikzpicture}
                \caption{Bad choice of $L_0$, corresponding to $S_0<0$}
            \end{subfigure}
            \caption{
            Graphical explanation on how the initial subspace $L_0$ should be chosen, for $n=1$ (so, $L(T_{\la_0}(T^*M)) \simeq S^1$). 
            Supposing that a curve with positive velocity moves clockwise, you see that if we choose $L_0$ in the right-half of $\mathcal{V}$ (case (a)), then $B_t L_0$ does not cross the vertical space $\Pi_0$ for small times. 
            If instead we choose $L_0$ in the left-half of $\mathcal{V}$, then there can be an intersection with $\Pi_0$ in arbitrarily small time. 
            }
            \label{fig:choice-L0}
        \end{figure}
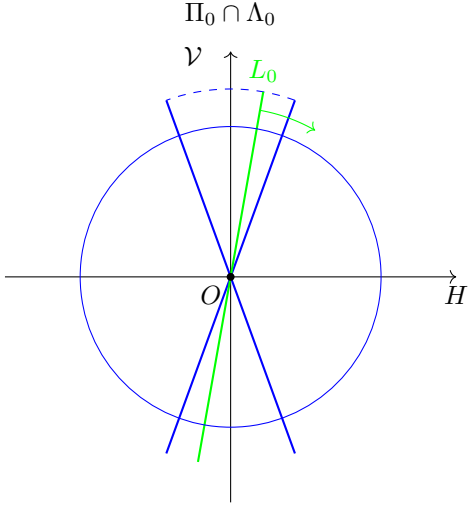
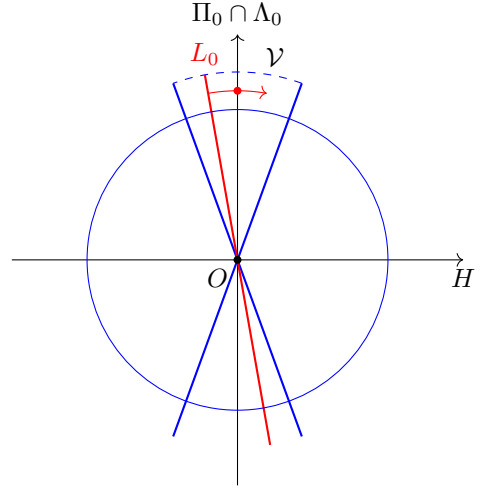
        \medskip
        In order to find a suitable $L_0$ tangent to $\Sigma$, we can use the local coordinates introduced in the previous subsection. Since we want $L_0$ to be tangent to $\Sigma$, first of all we can restrict to a symplectic subspace of $T_{\la_0}(T^*M)$ of dimension $2n-2$. 
        More precisely, since $Z_I(t) \notin \Pi_0$ for every $t\in[0,T]$, there is some $\zeta(t)\in \Pi_0$ such that $\sigma (\zeta(t) , Z_I(t)) \neq 0$ and then we can fix any subspace $\Gamma(t) \subset T_{\la_0}\Sigma$ symplectic such that $\Gamma(t) \oplus \R Z_I(t) \oplus \R \zeta(t) = T_{\la_0}(T^*M)$. 
        Moreover, since $B_t$ is symplectic on $T_{\la_0}(T^*M)$, $B_t$ restricted to $\Gamma(t)$ is also symplectic.

        Now, we must have $\dim (\Gamma(t) \cap \Pi_0) = n-1$. 
        In particular $\Gamma(t) \cap \Pi_0 = \Gamma(t) \cap \Lambda_0 \eqqcolon \bar \Pi_0(t)$ is a Lagrangian subspace in $\Gamma(t)$. 
        So, if we find some $\bar L_0$ Lagrangian in $\Gamma(0)$ and satisfying 
        \begin{equation}
            B_t \bar L_0 \cap \bar \Pi_0(t) =\{0\} \quad \text{for } t\in[0,\eps],
        \end{equation}
        then $L_0 \coloneqq \bar L_0 \oplus\R Z_I(0)$ satisfies \eqref{eq:choice-of-L0}.  
        Moreover, $L_0$ is a Lagrangian subspace since $\Gamma(0) \subset Z_I(0)^\angle$ implies
        \begin{equation}
            \sigma(Z_I(0) , v)=0, \quad \forall v\in \bar L_0 \subset \Gamma(0).
        \end{equation}
        So, from now on, our ambient space will be this subspace $\Gamma(t)$.
        Notice that there are always non-trivial choices of $\Gamma(t)$ if $ \dim M = n \geq 2 $.
        
        Now, we can choose any Lagrangian subspace $H(0)$ such that $\bar \Pi_0 \cap H(0) = \{0\}$ and, denoting by $H(t)=B_t H(0)$, consider the local chart $H(t)^\pitchfork$ in which $\bar \Pi_0(t)$ corresponds to the null symmetric matrix. 
        
        Recall that as a corollary of Proposition~\ref{prop:equivalence_legendre} we have $b_t \geq 0$. 
        This means that, by Lemma \ref{lem:vel-lagr-curve}, if we choose any initial Lagrangian subspace $\Xi_0$ corresponding to a matrix $S_0 > 0$ and we let it evolve by the flow of $B_t$, then, denoting by $S_t$ the matrix corresponding to $\Xi_t \coloneqq B_t \Xi_0$, we have
		\begin{equation}
			\dot S_t \geq 0,
		\end{equation}
        and in particular $S_t > 0$.
		Therefore, $L_t$ stays in the chart $H(0)^{\pitchfork}$ and we have $S_t > 0$ for $t \in [0,\eps]$ for $\eps$ sufficiently small, and this implies that $S_t$ do not cross the vertical subspace $\bar \Pi_0$, since it corresponds to the zero matrix.

        Now, if in addition we choose $S_0 > 0$ close enough to zero, then by continuity of the flow $B_t$ there will be no conjugate points also for $t\in[\eps,T]$. 
        Indeed, suppose, by contradiction that there is a sequence of matrices $S^{(k)}>0$, $\lim_{k\to \infty} S^{(k)} = 0$, for which the corresponding curve of Lagrange subspaces $(\Xi_t ^{(k)})_{t\in[0,T]}$ has a non-trivial intersection with the vertical space at time $t_k \in [0,T]$. 
        Then, since $[0,T]$ is compact, there is a limit point $t_\infty$ of the sequence $(t_k)_{k\in \N}$. 
        Moreover, by continuity the flow $B_t$ is continuous, also the curve $B_t \Lambda_0$ has a non-trivial intersection with the vertical space at time $t_\infty$. 
        
		So, we have proven that 
		\begin{prop}

			If $C_t=\{0\}$, then, for $\eps>0$ small enough, there is a sufficiently small neighbourhood $\bar {\cal V}$ of $\bar \Pi_0(0)$ and a Lagrangian subspace $\bar L_0 \in \bar {\cal V}$ such that $B_t \bar L_0 \cap \Pi_0 =\{0\}$ for $t\in[0,T]$. 
		\end{prop}
		We have to consider now the case $C_t \neq \{0\}$. It is possible to reduce it to the previous case $C_t=\{0\}$ passing to the quotient $(C_t)^ \angle / C_t$.
		Indeed, the family of subspaces $C_t$ is monotone non-increasing:
		\begin{equation}
			C_{t_2} \subset C_{t_1} \: \text{if } t_1 \leq t_2 . 
		\end{equation}
		We set $C_0=\bar \Pi_0$. The family $C_t$ is continuous from the left (see, for instance, \cite{AgBes1}). 
        Denote its points of discontinuity as 
		\begin{equation}
			0 \leq s_1 < s_2 < \dots s_k \leq T. 
		\end{equation}
		Hence, the family $C_t$ is constant on the segments $(s_i,s_{i+1}]$. We can then decompose each $ C_t $ as
		\begin{equation}
			C_t = E_{i+1} \oplus E_{i+1} \oplus \dots \oplus E_k, \quad t\in(s_i,s_{i+1}]. 
		\end{equation}
		so that $C_t=E_{1} \oplus \dots \oplus E_k $ for $t\in(0,s_{1}]$, $C_t=E_{2} \oplus \dots \oplus E_k $ for $t\in(s_{1},s_2]$, and so on.
            We can construct the corresponding dual splitting in $\Gamma$
		\begin{equation}
			H = 
                F_1 \oplus \dots \oplus F_k, 
			\quad 
			\sigma_{\la_0} ( E_i , F_j ) = 0, \text{ if } i \neq j.
		\end{equation}
		We have the following Lemma \cite[Lemma 21.3.]{AgSa}.
        \begin{lem}
        \label{lem:AgSa21.2}
            Fix $H_0$ any Lagrangian subspace in $\Gamma$ such that $\bar \Pi_0\cap H_0=\{0\}$. Then, there are $H_1,\dots,H_k$ Lagrangian subspaces, $H_i\cap \bar \Pi_0 = 0$, and $\eps_1\geq\dots\geq\eps_k>0$ such that, for any Lagrangian subspace $\bar L_0$, $ \bar L_0\cap H_0 = \{0\}$, corresponding to the matrix $\eps\I$ in the $(\bar \Pi_0,H_0)$ local chart, we have
            \begin{enumerate}
                \item $B_t \bar L_0 \cap \bar \Pi_0 = \{0\}$, for $t\in[0,s_i]$;
                \item $ B_t \bar L_0 \cap H_i = \{0\} $, for $t \in [0,s_i]$ and $S_0 > 0$ in the $(\bar \Pi_0,H_i)$ local chart. 
            \end{enumerate}
        \end{lem}
        From this Lemma, for $i=k$, we obtain that $B_t \bar L_0 \cap \Pi_0 = \{0\}$, for $t\in[0,T]$, provided that $\bar L_0$ corresponds to the matrix $S_0 = \eps \I$, for $\eps < \eps_k$, in the $(\bar \Pi_0,H_0)$ local chart.
        Thus, if we take any of these $\bar L_0$, then $L_0 \coloneqq \R Z_I(t) \oplus  \bar L_0$ is a Lagrangian subspace in $T_{\la_0}(T^* M)$ satisfying \eqref{eq:choice-of-L0}, which was exactly what we wanted to prove to complete the proof of the Theorem. 
    \end{proof} 
    Notice that, in the proof of Lemma \ref{lem:AgSa21.2}, the subspaces $H_i$ are necessary because in general there can be a non-trivial intersection 
    $ B_t \bar L_0 \cap H_0$.
    But we want to avoid it since we want to use the local coordinates in $H^{\pitchfork}$, for some Lagrangian subspace $H$, and this 
    is always possible by means of small perturbation of the $H_i$. 
\end{subsection}

\begin{section}{Examples}
\label{sec:examples}

\subsection{An example: the Heisenberg group with a drift}
\label{subsec:Heisenberg}

We are going now to show an example of Problem \ref{prob:OCP} modelled on the Heisenberg group. We will show that there are some nontrivial singular extremals, but, thanks to the necessary condition of Theorem \ref{thm:nec-cond}, one can easily rule out their optimality.

    Consider $\R^3$ with coordinates $(x,y,z)$. Let $X,Y,Z$ be the generators of the Heisenberg algebra, which can be written as
    \begin{align}
        X&= \pa_x - \frac{y}{2}\pa_z,\\
        Y&= \pa_y + \frac{x}{2}\pa_z,\\
        Z&= \pa_z .
    \end{align}
    The three vector fields satisfy
    \begin{equation}
        [X,Y]=Z,\qquad [X,Z]=0,\qquad [Y,Z] = 0.
    \end{equation}
    We consider the following control system
    \begin{align}
        \label{eqn: Heis-drift}
        &\dot q 
        =
        \al X(q) + \be Y(q) + \gamma Z(q)
        +
        u_1(t) X(q) + u_2(t) Y(q), 
        \\
        & 
        q(0)=q_0, 
    \end{align}
    where $\al,\be,\gamma\in\R$, $u_1,u_2 : [0,T]\to \overline{B_1 (0)} \subset \R^2$. As before, we consider the cost functional $J(u)=\|u\|_{L^1}$.
    The Hamiltonian function for this problem is 
    \begin{equation}
        H(\la,u)
        =
        h_0(\la)
        +
        \big\langle
            u,h_I(\la)
        \big\rangle
        -
        |u|
    \end{equation}
    where 
    \begin{align}
        h_0(\la)
        &=
        \big\langle 
            \la, 
            \al X(q) + \be Y(q) + \gamma Z(q)
        \big\rangle 
        =
        \al h_X(\la) + \be h_Y(\la) + \gamma h_Z(\la)
        ,
        \\
        h_I(\la)
        &=
        \big(
            h_X(\la),h_Y(\la)
        \big)
        ,
    \end{align}
    and 
    $
        h_X(\la)
        =
        \big\langle 
            \la, 
            X(q) 
        \big\rangle,
        \, 
        h_Y(\la)
        =
        \big\langle 
            \la, 
            Y(q) 
        \big\rangle,
        \, 
        h_Z(\la)
        =
        \big\langle 
            \la, 
            Z(q) 
        \big\rangle.   
    $

    The condition $u=r\frac{h_I}{|h_I|}$ reads 
    \begin{equation}
        (u_1,u_2)
        =
        r(t)
        \frac{
            (h_X,h_Y)
        }{
            \sqrt{h_X ^2 + h_Y ^2}
        }.
    \end{equation}
    So, the singular control $u$ is such that $h_X ^2 + h_Y ^2 = 1$ along the whole trajectory. 
    If we differentiate in time this equation and simplify, we obtain 
    \begin{align}
        \frac{d}{dt}
        [h_c(\la_t)]
        &= h_{0c}(\la_t) = 
        \big(
            \al h_Y(\la_t) - \be h_X(\la_t)
        \big)
        h_Z(\la_t)
        =
        0,
        \\[5pt]
        \frac{d^2}{dt^2}
        [h_c(\la_t)]
        &= \{h_0+rh_c,h_{0c}\}=
        (
            \al ^2 + \be ^2
            +
            r(\al h_X + \be h_Y)
        )
        h_Z ^2
        =
        0.
    \end{align}
    If $h_Z=0$ identically, then the two previous equations are trivial. If instead $h_Z\neq 0$, we obtain that 
    \begin{equation}
        (
            h_X, h_Y
        )
        =
        -
        \frac 
        {( \al, \be )}
        {\sqrt{\al ^2 + \be ^2}},
    \quad
        r
        =
        \sqrt{\al ^2 + \be ^2},
    \end{equation}
    Where we have chosen the negative sign for $h_X$ and $h_Y$ in order to have $r\geq 0$.
    The formula for the singular control $u$ reads
    \begin{equation}
        u(t)
        =
        (-\al,-\be).
    \end{equation}
    Thus, the system of equation \ref{eqn: Heis-drift} for this control is simply 
    \begin{equation}
        \dot q = \gamma Z(q).
    \end{equation}
    Notice that $h_{c0c}(\la_t)=-\sqrt{\al^2+\be^2}<0$ if $(\al,\be)\neq(0,0)$.
    So, by Theorem \ref{thm:nec-cond}, any piece of this extremal is not optimal between its endpoints. 
    On the other hand, if $(\al,\be) = (0,0)$, then the singular control is $u=0$, which is also a regular control.

\subsection{A Martinet type example}
\label{subsec:Martinet}
We are now going to show an example, modelled on the flat-Martinet sub-Riemannian structure, for which the singular extremals satisfy the strong generalized Legendre condition. Hence, Theorems \ref{thm:suff-cond-small-time} and \ref{thm:suff-cond-no-conj-points} apply, meaning that these extremal trajectories are strongly locally optimal.

Let $M=\R^3$, $f_0=\al \pax + \be \pay + \gamma \paz, f_1=\pax, f_2= \pay + \frac{x^2}{2}\paz$.
We consider the control system
\begin{equation}
    \label{eq:control-syst}
    \dot q = f_0(q) + u_1(t) f_1(q) + u_2(t) f_2(q).
\end{equation}
The brackets of these vector fields are (we define $f_{ij}=[f_i,f_j]$ and $f_{ijk}=[f_i,[f_j,f_k]]$)
\begin{align}
    f_{01} &= 0, & f_{02} &= \al x \paz, & f_{12}&=x\paz,
    \\
    f_{001} &= 0, & f_{002} &= \al^2 \paz, & f_{101}&=0,
    \\
    f_{102} &= \al \paz, & f_{201} &= 0, & f_{202}&=0.
\end{align}
Let us also denote
\begin{equation}
    p_x(\la)= \langle \lambda,\pa_x\rangle, \qquad p_y(\la) = \langle \lambda,\pa_y\rangle, \qquad p_z(\la) = \langle \lambda,\pa_z\rangle.
\end{equation}

So, following the notation introduced above, we have:
\begin{align}
    h_{0c} 
    &= 
    \frac{1}{2} \{h_0, \: h_1 ^2+ h_2 ^2\}
    =
    h_1 h_{01} + h_2 h_{02}
    =
    h_2 h_{02},
    \\[10pt]
    h_{00c}
    &=
    \{h_0, \: h_2 h_{02}\}
    =
    h_{02}^2 + h_2 h_{002},
    \\[10pt]
    h_{c0c}
    &=
    \frac{1}{2}\{h_1 ^2+ h_2 ^2, \: h_2 h_{02}\}
    =
    h_1 h_{12} h_{02} + h_1 h_2 h_{102}. 
\end{align}
Since for singular extremals we must have $h_{0c}=0$, we have either $1) \, h_2=0$ or $2) \, h_{02}=0$. In case $1)$ we must have 

\begin{align}
    &h_{c0c} = h_1 h_{12} h_{02} = \alpha p_x p_z^2,
    \\[10pt]
    &r
    =
    -\frac{h_{00c}}{h_{c0c}}
    =
    -\frac{h_{02}}{h_1 h_{12}} = -\frac{\alpha}{p_x} = -\alpha p_x\frac{1}{p_x^2}
\end{align}
As a result,
\begin{equation}
    h_{c0c}>0
    \iff
    r<0,
\end{equation}
so generalized Legendre condition does not hold in this situation.

In case 2), since $h_{02} = 0$ implies $x=0$, we have
\begin{align}
    r
    &=
    -\frac{h_{00c}}{h_{c0c}}
    =
    -\frac{h_2 h_{002}}{h_1 h_2 h_{102}}
    =
    -\frac{h_{002}}{h_1 h_{102}}
    =
    -\frac{\al^2 p_z}{p_x (\al p_z)}
    =
    -\frac{\al}{p_x}.
\end{align}
Since $h_1 = p_x$ and $h_2=p_y$ (when restricted to $x=0$), we have that the 
\begin{equation}
    h_{c0c} = h_1 h_2 h_{102} = \al p_x p_y p_z > 0.
\end{equation}
and the singular control is 
\begin{equation}
    u(t)
    =
    r (h_1,h_2)
    =
    -\frac{\al}{p_x}
    (p_x,p_y)
    =
    \left(
        -\al,
        -\al\frac{p_y}{p_x}
    \right).
\end{equation}
The Pontryagin Hamiltonian written in coordinates $(x,y,z,p_x,p_y,p_z)$ for this problem is 
\begin{equation}
    H(p,q,u)
    =
    \al p_x + \be p_y + \gamma p_z
    +
    u_1 p_x + u_2 \left( p_y + \frac{x^2}{2} p_z \right)  
    -
    |u|. 
\end{equation}
Hence, the corresponding Hamiltonian system is  
\begin{align}
    \dot x &= u_1 + \al,      &       \dot p_x &= -u_2 x p_z, 
    \\[5pt]
    \dot y &= u_2 + \be,     &       \dot p_y &=0,
    \\[5pt]
    \dot z &= u_2\frac{x^2}{2} + \gamma,   &      \dot p_z &=0.
\end{align}
Therefore, if $x=0$, the only possibility for $p$ is to be constant.

The trajectory of \eqref{eq:control-syst} corresponding to the control $u$ starting from $(0,y_0,z_0)$ is  
\begin{align}
    x(t) &= 0,
    \\[5pt]
    y(t) &= \left( \be - \frac{\al p_y}{p_x}\right)t + y_0,
    \\[5pt]
    z(t) &= \gamma t + z_0.
\end{align}

We can choose the following normalization for $p$: assume that $\al>0$ (the case $\al<0$ is similar). Since $(h_1,h_2)=(p_x,p_y)$, we have $p_x^2+p_y^2 =1$, so that we can parametrize the singular extremals with $(p_x,p_y)=(\cos\theta,\sin \theta)$. 
To satisfy the condition $0<r<1$, we have to choose $ \cos \theta < - \al $. 
Finally, for any such $\theta$, choosing $p_z=\sign(\sin(2\theta))$, we have $h_{c0c}= \frac \al 2 |\sin(2\theta)| > 0$. 

So, to resume, for any triple $(\al,\beta,\gamma)\in [-1,1]\times\R^2$, the control system \eqref{eq:control-syst} has an infinite family of singular extremals satisfying strong generalized Legendre condition. 
Their corresponding controls are constant and they can be parametrized as 
\begin{equation}
    u = (-\al , -\al \tan \theta), 
    \quad 
    \begin{cases}
        \text{for } \pi - \arccos \al < \theta < \pi + \arccos \al & \text{ if $\al > 0$},
        \\
        \text{for } - \arccos |\al| < \theta < \arccos |\al| & \text{ if $\al < 0$}.
    \end{cases}
\end{equation}

\subsection{Left-invariant problem in \texorpdfstring{\(\mathrm{SU}(2)\)}{SU(2)}}

\label{sec:SU(2)}
In this section we are going to illustrate an example where we use all the optimality conditions developed in this paper. In particular, we will write explicitly the Jacobi Equation and will find all the non-trivial solutions of this system.

Let us consider the following system: 
\begin{equation}
\label{eq:main}
    \dot U 
    =
    U
    \big(
        \al A
        +
        \be B
        +
        \gamma C
        +
        u_1(t)
        A
        +
        u_2(t)
        B
    \big),
    \quad
    U(0)=\I,
\end{equation}
where $U\in \mathrm{SU}(2)$, 
$
    \al,\be,\gamma\in\R
$, 
$
    (u_1,u_2) : [0,T] \to \overline{B_1(0)}\subset \R^2
$ 
is measurable and 
\begin{equation}
    A=
    -\frac{i}{2}
    \begin{pmatrix}
        0 & 1 \\
        1 & 0
    \end{pmatrix}
    ,
    \quad
    B=
    -\frac{i}{2}
    \begin{pmatrix}
        0 & -i \\
        i & 0
    \end{pmatrix}
    ,
    \quad  
    C=
    -\frac{i}{2}
    \begin{pmatrix}
        1 & 0 \\
        0 & -1
    \end{pmatrix}
    .
\end{equation}
From direct computations, we deduce the following relations:
\begin{align}
    \label{eq:commABC}
    [A,B]&=C, & [B,C]&=A, & [C,A] &=B. 
\end{align}
Therefore our system is strong Lie bracket generating. We want to minimize
\begin{equation}
    J(u)
    =
    \int_0 ^T |u(t)| dt.
\end{equation}
Let $F_0= \al A + \be B + \gamma C$, $F_I=(A,B)$ and define the left-invariant Hamiltonians as functions leaving on the dual of the Lie algebra $\mathfrak{su}(2)$
\begin{align}
    h_A(\lambda)=\langle \lambda,A\rangle, 
    \quad
    h_B(\lambda)=\langle \lambda,B\rangle,
    \quad
    h_C(\lambda)=\langle \lambda,C\rangle,
    \\
    h_0(\lambda)
    =
    \langle \lambda,F_0\rangle,
    \quad 
    h_I(\lambda)
    =
    \big(
       h_A(\lambda) , h_B(\lambda)
    \big),
    \quad
    \lambda\in(\mathfrak{su}(2))^*.
\end{align}
The Pontryagin (left-invariant) Hamiltonian function for our optimal control problem is 
\begin{align}
    H(\lambda,u)
    &=
    h_0(\lambda)
    +
    \langle 
    u,h_I(\lambda)
    \rangle
    -
    |u|
    \\
    &=
    \al h_A(\lambda)
    +
    \be h_B(\lambda)
    +
    \gamma h_C(\lambda)
    +
    r
    \Big(
        \Big\langle
            v,
            \big(
                h_A(\lambda),h_B(\lambda)
            \big)
        \Big\rangle
        -1
    \Big).
\end{align}

From the commutation rules~\eqref{eq:commABC} it follows that
\begin{equation}
    \{h_A,h_B\}=h_C,
    \quad
    \{h_C,h_A\}=h_B,
    \quad
    \{h_B,h_C\}=h_A.
\end{equation}
Singular extremals corresponds to 
\begin{equation}
    |h_I|=1 \iff 2h_c=h_A^2+h_B^2=1.
\end{equation}
The first derivative of $h_c$ is 
\begin{align}
    \dot h_c
    =
    \{
        h_0,h_c
    \}
    &=
    \left\{
        \al h_A
        +
        \be h_B
        +
        \gamma h_C
        ,
        \frac{1}{2}
        (
            h_A^2
            +
            h_B^2
        )
    \right\}
    =
    \big(
        \al h_B
            -
        \be h_A
    \big)
    \left\{
        h_A,
        h_B
    \right\}
    +
    \gamma
    \big(
        h_A
        \left\{
            h_C
            ,
            h_A
        \right\}
        +
        h_B
        \left\{
            h_C
            ,
            h_B
        \right\}
    \big)
    =
    \\
    &=
    (
        \al h_B 
        -
        \be h_A
    )
    h_C
    .
\end{align}
We can further compute
\begin{align}
    \label{eq:dh_A}
    \dot h_A
    &=
    \{
        h_0 + r h_c
        ,
        h_A
    \}
    =
        -
        \be h_C
        +
        \gamma h_B
        -
        r(t) h_B h_C
    ,
    \\
    \label{eq:dh_B}
    \dot h_B
    &=
    \{
        h_0 + r h_c
        ,
        h_B
    \}
    =
        \al h_C
        -
        \gamma h_A
        +
        r(t) h_A h_C
    ,
    \\
    \label{eq:dh_C}
    \dot h_C
    &=
    \{
        h_0 + r h_c
        ,
        h_C
    \}
    =
        -
        \al h_B
        +
        \be h_A
        -
        r(t) h_A h_B
        +
        r(t) h_B h_A
    =
    \be h_A    
    -
    \al h_B
    .
\end{align}
And the second derivative of $h_c$ is 
\begin{align}
    \ddot h_c
    &=
        (
            \al \dot h_B 
            -
            \be \dot h_A
        )
        h_C
        +
        (
            \al h_B 
            -
            \be h_A
        )
        \dot h_C
    \\
    &=
    \begin{cases}
        \big(
            (\al^2+\be^2)h_C
            +
            (rh_C-\gamma)
            (\al h_A + \be h_B)
        \big)
        h_C, 
        \quad 
        \text{ if } \al h_B \equiv\be h_A
        \\
        0 \quad \text{ if } h_C\equiv 0.
    \end{cases}
\end{align}

So, from $\ddot h_c = 0$, we must have
\begin{equation}
    r(t)
    =
    \left(
        \gamma
        -
        \frac{(\al^2+\be^2)h_C}{\al h_A + \be h_B}
    \right)
    \frac{1}{h_C}.
\end{equation}
In particular, if $h_A^2+h_B^2=1$ and $\al h_B= \be h_A$, then
\begin{equation}
    \label{eq:sing-h_A-h_B}
    (h_A,h_B)
    =
    \pm
    \frac{(\al,\be)}{\sqrt{\al^2+\be^2}}.
\end{equation}
So, $h_A,h_B$ are constant. 
Let us assume $\al^2+\be^2=1$, so that
\begin{equation}
    r
    =
    \frac{\gamma}{h_C}
    -
    (\pm 1)
\end{equation}
Notice that if $\alpha h_B - \beta h_A = 0$, then $\dot h_C(t) = 0$. So, $r$ is constant. 

In the end, the value of the singular control must be
\begin{equation}
\label{eq:sing_control_su2}
    u_*
    =
    r
    (h_A,h_B)
    =
    \left(
        \frac{\gamma}{h_C}
        -
    (\pm 1)
    \right)
    (\al,\be)
    .
\end{equation}

Let us check when this control satisfies the generalized Legendre condition. A straightforward computation shows that
$$
h_{c0c} = h_C^2(\alpha h_A + \beta h_B) = \pm h_C^2\sqrt{\alpha^2 +\beta^2}.
$$
Therefore, in order to have $h_{c0c}>0$ we need to choose the plus sign in~\eqref{eq:sing_control_su2}.

Denote $u_*=\left(u_*^{(1)},u_*^{(2)}\right)$ and 
\begin{equation}
    X
    =
    \al A +
    \be B +
    \gamma C +
    u_*^{(1)} A +
    u_*^{(2)} B
    =
    \frac {\gamma} {h_C}
    (
        \al A + \be B + h_C C
    ).
\end{equation}

So, to summarize, we have found that the singular control is constant and the solution to the Hamiltonian system is 
\begin{equation}
    U(t)=e^{tX}, p=const.
\end{equation}

\medskip

    Now, we want to find the solutions to the Jacobi Equation of this problem. 
   
    Let us define $h_I ^\perp = (-\be , \al)$. Notice that $P_1=\al A+\be B $, $P_2=-\be A + \al B$ and $C$ is again a basis for $\mathrm{SU}(2)$ and,  since we have assumed $\alpha^2 + \beta^2 = 1$,
    \begin{equation}
        [P_1,P_2]=C, \quad [P_2,C]=P_1, \quad [K,P_1]=P_2.
    \end{equation}
    So, since the commutator relations are the same as \eqref{eq:commABC}, we can assume $\al=1,\be=0$. Thus, we have
    \begin{align}
        h_I&=(1,0), \quad h_I ^\perp = (0,1),
        \\
        \langle h_I,\vec h_I \rangle &= \vh_A, \quad \langle h_I ^\perp,\vec h_I \rangle = \vh_B.
    \end{align}
     Given an admissible variation $v$, we have that 
    \begin{equation}
        v(t) = \rho(t) h_I + w(t) = \rho(t) (1,0) + \omega (t) (0, 1).
    \end{equation}
    
    In order to write down the Jacobi equation, we have to compute 
    \begin{equation}
        Z_tv=(\Phi_t)_* ^{-1} \langle v,\vec h_I \rangle.
    \end{equation}
        First, we have
    \begin{equation}
        (\Phi_t)_* ^{-1} = e^{t \mathrm{ad}\vec H_{u_*}}.
    \end{equation}
    Let us compute this exponential in a suitable basis.

As discussed previously, $h_C$ is constant along the singular trajectory. In order to avoid confusion in the following formulas, we introduce a parameter
    $$
    \kappa \coloneqq h_C(\lambda_0).
    $$
On $T(T^*M),$ we can take the basis
    \begin{align}
        \vec H_1 = \vh_B,
        \quad 
        \vec H_2 = \frac{1}{\sqrt{1+\kappa^2}} (-\kappa \vh_A + \vh_C),
        \quad
        \vec H_3 = \vec H_{u_*} = \frac{\gamma}{\kappa} (\vh_A + \kappa \vh_C),
        \quad 
        \pa_{ H_1}, \quad \pa_{ H_2}, \quad \pa_{ H_3 }.
    \end{align}
    Note that $\pa_{ H_1}, \pa_{ H_2}, \pa_{H_3}$ are well defined, since all of those Hamiltonians are linear functions on the fibres. 
    In this basis, the operator $\mathrm{ad}\vec H_3=\frac{\gamma}{\kappa}\mathrm{ad}(\vh_A+\kappa \vh_C)$ acts as follows:
    \begin{align*}
        [\vec H_3, \vec H_1]&= k \vec H_2, & [\vec H_3, \pa_{ H_1}]&= 0,
        \\
        [\vec H_3, \vec H_2]&= -k \vec H_1, & [\vec H_3, \pa_{ H_2}]&=0,
        \\
        [\vec H_3, \vec H_3]&= 0, & [\vec H_3, \pa_{ H_3}]&=0,
    \end{align*}
    where $k=\frac{\gamma\sqrt{1+\kappa^2}}{\kappa}$. Also
    \begin{equation}
        [\vec H_1 , \vec H_2] = \frac{1}{k} \vec H_3. 
    \end{equation}
    So, the flow $e^{t \mathrm{ad}\vec H_3}$ fixes the vertical directions $\pa_{H_1},\pa_{H_2},\pa_{H_3}$ and on the other directions it acts as a rotation generated by the matrix
    \begin{equation}
        \begin{pmatrix}
            0 & -k & 0 \\
            k & 0 & 0 \\
            0 & 0 & 0
        \end{pmatrix}
    \end{equation}
    The exponential of this matrix is then
    \begin{equation}
    \label{eq:exp-lin-hamilt-SU2}
        e^{t \mathrm{ad} \vec H_3}
        =
        \begin{pmatrix}
            \cos(k t) & -\sin(kt) & 0 & 0 & 0 & 0 \\
            \sin(kt) & \cos(k t) & 0 & 0 & 0 & 0 \\
            0 & 0 & 1 & 0 & 0 & 0\\
            0 & 0 & 0 & 1 & 0 & 0\\
            0 & 0 & 0 & 0 & 1 & 0\\
            0 & 0 & 0 & 0 & 0 & 1
        \end{pmatrix}
    \end{equation}
    Now, since $Z_t h_I = e^{t \mathrm{ad}\vec H_3}(\vh_A)$ and $[\vec H_{u_* } , \vh_A ] = \gamma \vh_B$, we have 
    \begin{equation}
        \frac{d}{dt}\left( Z_t h_I \right)
        =
        e^{t \mathrm{ad}\vec H_3}
        (\mathrm{ad}\vec H_3(\vh_A))
        =
        \gamma
        e^{t \mathrm{ad}\vec H_3} \vh_B.
    \end{equation}
    So that
    \begin{gather}
        \mathcal{Z}_t v 
        = - \frac{d}{dt}(Z_t h_I)\phi(t)+(Z_t h_I^\perp)\omega(t)
        =
        \big(
            \omega(t)
            -
            \gamma \phi(t)
        \big)
        e^{t \mathrm{ad}\vec H_3} \vh_B
        =
        \big(
            \omega(t)
            -
            \gamma \phi(t)
        \big)
        e^{t \mathrm{ad}\vec H_3} \vec H_1
        =
        \\[5pt]
        =
        \big(
            \omega(t)
            -
            \gamma \phi(t)
        \big)
        \big(
            \cos(kt) \vec H_1 + \sin(kt) \vec H_2
        \big)
        .
    \end{gather}
    Now, to compute the r.h.s. of the Jacobi equation, we need to evaluate $\sigma(\mathcal{Z}_t \cdot , \e)$, for $\e\in T_{\la_0}(T^*M)$.
    We can write
    \begin{equation}
    \label{eq:eta}
        \e 
        = 
        \e_1 \vec H_1 + \e_2 \vec H_2 + \e_3 \vec H_{3} + \e_1^* \pa_{H_1} + \e_2 ^* \pa_{H_2} + \e_3 ^* \pa_{H_{3}}. 
    \end{equation}
    The symplectic form in this basis is
       \begin{align}
        &\sigma(\pa_{H_\al}, \vec H_\be) = \langle dH_\be , \pa_{H_\al} \rangle = \delta_{\al\be},
        \\
        &\sigma(\pa_{H_\al}, \pa_{H_\be})=0,
        \\
        &\sigma(\vec H_\al, \vec H_\be)=H_{\al\be}.
    \end{align}
    We can calculate the Poisson brackets in a straightforward manner 
    \begin{align}
         \{H_1,H_2\}(\lambda_0) &= \left\{h_B,\frac{1}{\sqrt{1+\kappa^2}}(-\kappa h_A + h_C)\right\}(\lambda_0) = \frac{1}{\sqrt{1+\kappa^2}}(\kappa h_C(\lambda_0) + h_A(\lambda_0)) =  \sqrt{1+\kappa^2},\\
         \{H_1,H_{3}\}(\lambda_0)  &=\left\{h_B,\frac{\gamma}{\kappa} (h_A + \kappa h_C)\right\}(\lambda_0) = \gamma(-h_C(\lambda_0)/\kappa+h_A(\lambda_0)) = 0,\\
        \{H_2,H_{3}\}(\lambda_0)  &= \left\{\frac{1}{\sqrt{1+\kappa^2}}(-\kappa h_A + h_C),\frac{\gamma}{\kappa} (h_A + \kappa h_C)\right\}(\lambda_0) = \frac{\gamma(1+\kappa^2)}{\kappa\sqrt{1+\kappa^2}}h_B(\lambda_0) = 0.
    \end{align}
    Hence, we have
    \begin{align}
        &\sigma(\cos (kt) \vec H_1 + \sin(kt) \vec H_2 , \eta )
        =
        \\
        &=
        \sigma 
        \big( 
            \cos (kt) \vec H_1 + \sin(kt) \vec H_2
            , 
            \e_1  \vec H_1 + \e_2  \vec H_2 + \e_3  \vec H_3 + \e_1^* \pa_{H_1} + \e_2 ^* \pa_{H_2} + \e_3 ^* \pa_{H_{3}}
        \big)
        \\
        &=
        \cos (kt) \e_2 H_{12} - \cos (kt) \e_1^*  - \sin(kt) \e_1 H_{12} - \sin(kt) \e_2 ^* 
        \\
        &=
        \sqrt{1+\kappa^2} (\cos (kt) \e_2 - \sin(kt) \e_1) - (\cos (kt) \e_1^* + \sin(kt) \e_2 ^*).
    \end{align}
    Let us define $\psi(t,\e)\coloneqq \sqrt{1+\kappa^2} (\cos (kt) \e_2 - \sin(kt) \e_1) - (\cos (kt) \e_1^* + \sin(kt) \e_2 ^*)$.
    
    The inverse of the Legendre form is 
    \begin{equation}
        l_t
        =
         \begin{pmatrix}
             \frac{1}{r} & - \kappa \\
             -\kappa & \gamma \kappa
         \end{pmatrix}
         \implies
         l_t^{-1}
         =
         \left(
         \frac{\gamma \kappa}{r}
         -
         \kappa^2
         \right)^{-1}
        \begin{pmatrix}
             \gamma \kappa & \kappa \\
             \kappa & \frac{1}{r}
         \end{pmatrix},
    \end{equation}
    so that 
    \begin{equation}
        l_t^{-1} \sigma
         \big(
             \mathcal{Z}_t \cdot , \eta
         \big)
         =
         \left(
         \frac{\gamma \kappa}{r}
         -
         \kappa^2
         \right)^{-1}
        \begin{pmatrix}
             \gamma \kappa & \kappa \\
             \kappa & \frac{1}{r}
         \end{pmatrix}
          \begin{pmatrix}
             1 \\
             -\gamma
         \end{pmatrix}
        \psi(t,\eta)
        =
        -
        \frac{\psi(t,\eta)}{\kappa}
        \begin{pmatrix}
            0 \\ 1
        \end{pmatrix}.
    \end{equation}
    Hence 
    \begin{gather}
    	\label{eq:Jacobi-SU2}
        \dot \e 
        =
        -\mathcal{Z}_t
        l_t^{-1}
        \sigma
        \big(
            \mathcal{Z}_t \cdot , \eta
        \big)
        =
        \\
        =
        -\frac{\gamma}{\kappa}
        \left(
            \sqrt{1+\kappa^2}\big( \e_2 \cos (kt) - \e_1 \sin(kt) \big) -\e_1^* \cos(kt) - \e_2 ^* \sin(kt)
        \right)
        \big(
            \cos(kt) \vec H_1 + \sin(kt) \vec H_2
        \big).
    \end{gather}
    First, we are going find six linearly independent solutions to equation \eqref{eq:Jacobi-SU2} and then we are going to determine solutions corresponding to conjugate times. 
    If we compare equation \eqref{eq:Jacobi-SU2} with \eqref{eq:eta}, we immediately see that $\dot \e_1 ^* = \dot \e_2 ^* = \dot \e_3 ^* = \dot \e_3 = 0$.
    To determine a solution of equation \eqref{eq:Jacobi-SU2}, it is sufficient to study the variables $\e_1$ and $\e_2$ keeping the other fixed.
    Let us denote by $\e = (\e_1,\e_2,\e_3,\e_1^*,\e_2^*,\e_3^*)$.
    Elements of the subspace
    \begin{equation}
    \mathrm{span}
        \big\{
            (0,0,1,0,0,0)
            ,
            (0,0,0,0,0,1)
            ,
            (0,1,0,\sqrt{1+\kappa^2},0,0)
            ,
            (1,0,0,0,-\sqrt{1+\kappa^2},0)
        \big\}
    \end{equation}
    are zeros of the r.h.s. of \eqref{eq:Jacobi-SU2}, hence they form the space of constant solutions.
    So, by linearity of the system, we can reduce to the study of a two variable ODE, with $(\e_1,\e_2)\neq0$, $\e^*\equiv0, \e_3\equiv0$.
    Let $a=\cos(kt), \, b=\sin(kt)$ and recall that $k = \gamma(\sqrt{1+\kappa^2})/\kappa$. 
    The previous system can be written in matrix form:
    \begin{equation}
    \label{eq:Jac-mat1}
        \begin{pmatrix}
            \dot \e_1 \\ \dot \e_2
        \end{pmatrix}
        =
        k
        \begin{pmatrix}
            ab & -a^2 \\
            b^2 & -ab
        \end{pmatrix}
        \begin{pmatrix}
            \e_1 \\ \e_2
        \end{pmatrix}
        .
    \end{equation}
    Notice that the matrix defining the system of equations is nilpotent:
    \begin{equation}
        \begin{pmatrix}
            ab & -a^2 \\
            b^2 & -ab
        \end{pmatrix}
        =
        \begin{pmatrix}
            a & -b \\
            b & a
        \end{pmatrix}
        \begin{pmatrix}
            0 & -1 \\
            0 & 0
        \end{pmatrix}
        \begin{pmatrix}
            a & b \\
            -b & a
        \end{pmatrix}.
    \end{equation}
    So, we can make a (time-dependent) change of variable
    \begin{equation}
    \label{eq:change-of-basis-SU2}
        \begin{pmatrix}
            \xi_1 \\ \xi_2
        \end{pmatrix}
        =
        \begin{pmatrix}
            a & b \\
            -b & a
        \end{pmatrix}
        \begin{pmatrix}
            \e_1 \\ \e_2
        \end{pmatrix},
    \end{equation}
    and rewrite system \eqref{eq:Jac-mat1} in these new coordinates:
    \begin{gather}
        \begin{pmatrix}
            \dot \xi_1 \\ \dot \xi_2
        \end{pmatrix}
        =
        k
        \begin{pmatrix}
            -b & a \\
            -a & -b
        \end{pmatrix}
        \begin{pmatrix}
            a & -b \\
            b & a
        \end{pmatrix}
        \begin{pmatrix}
            \xi_1 \\ \xi_2
        \end{pmatrix}
        +
        k
        \begin{pmatrix}
            0 & -1 \\
            0 & 0
        \end{pmatrix}
        \begin{pmatrix}
            \xi_1 \\ \xi_2
        \end{pmatrix}
        .
    \end{gather}
    Simplifying, one obtains
    \begin{equation}
    \label{eq:jac-simp}
        \left\{
            \begin{aligned}
                \dot \xi_1 &= 0, 
                \\
                \dot \xi_2 &= -k\xi_1 .
            \end{aligned}
        \right.
    \end{equation}      
    The solutions are 
    \begin{equation}
        \xi(t)
        =
        \begin{pmatrix}
            1 & 0 \\
            -kt & 1
        \end{pmatrix}
        \xi(0)
        =
        \begin{pmatrix}
        	\xi_1(0) \\ 
        	-kt\xi_1(0) + \xi_2(0)
        \end{pmatrix}
        .
    \end{equation}
	Moreover, since the change of basis in \eqref{eq:change-of-basis-SU2} is the identity for $t=0$, we have $\xi(0)=(\e_1(0),\e_2(0))$ and
	\begin{equation}
		\begin{pmatrix}
			\e_1(t) \\
			\e_2(t)
		\end{pmatrix}
		=
		\begin{pmatrix}
			a & -b \\ b & a
		\end{pmatrix}
		\begin{pmatrix}
			\e_1(0) \\ 
			-kt\e_1(0) + \e_2(0)
		\end{pmatrix}
		=
		\begin{pmatrix}
			\big( \cos(kt) + kt \sin(kt) \big) \e_1(0) - \sin(kt) \e_2(0) \\ 
			\big( \sin(kt) - kt \cos(kt) \big) \e_1(0) + \cos(kt) \e_2(0)
		\end{pmatrix}.
	\end{equation}
	So, we have obtained a family of six linearly independent solution for the equation \eqref{eq:Jacobi-SU2}.
	We still have to determine a solution with the appropriate boundary conditions, which are
    \begin{align}
        \e(0) &\in \big( T_{\la_0} (T^* _{q_0} M) + \R Z_I(0) \big) \, 
        \cap
        \R Z_I(0) ^\angle ,
        \\
        \label{eq:eta-is-vert}
        \e(T) &\in T_{\la_0}(T_{q_0}M).
    \end{align}
    Recall that 
    $
    Z_I(0) 
    = 
    \vec h_A 
    = 
    -
    \frac{\kappa}{\sqrt{1+\kappa^2}} \vec H_2 
    +
    \frac{\kappa }{\gamma (1+\kappa^2)} \vec H_3 
    \eqqcolon
    c_2 \vec H_2 - c_3 \vec H_3
    $. 
    A direct verification shows that
\begin{align}
\big( T_{\la_0}(T^* _{q_0}M) + \R Z_I(0) \big) &=\mathrm{span}\{\pa_{H_1}, \pa_{H_2},\pa_{H_3},\vh_A\},\\
        \R Z_I(0) ^\angle &= \mathrm{span}\{\vh_A,\vec H_3, \pa_{H_1}, c_3\pa_{H_2}+c_2 \pa_{H_3},\vec H_1 - \sqrt{1+\kappa^2} \pa_{H_2}\}.
\end{align}
    Therefore,
    \begin{gather}
        \big( T_{\la_0}(T^* _{q_0}M) + \R Z_I(0) \big) \, 
        \cap
        \R Z_I(0) ^\angle
        =
        \mathrm{span}\{\vh_A,\pa_{H_1},c_3\pa_{H_2}+c_2\pa_{H_3}\}.
    \end{gather}
    and $T_{\la_0}(T_{q_0} ^* M)=\mathrm{span}\{\pa_{H_1},\pa_{H_2},\pa_{H_3}\}$. Here all the vector fields are intended to be evaluated at $\la_0$.
    
    We have to find the solutions for 
    $ \e(0) \in \mathrm{span}\{ \vh_A, \pa_{H_1}, c_3\pa_{H_2}+c_2\pa_{H_3} \} $ 
    and see for which $T>0$ we realize \eqref{eq:eta-is-vert}. In order to determine these solutions, for each of the three vectors $\vh_A, \pa_{H_1}, c_3\pa_{H_2}+c_2\pa_{H_3}$, we write it as the sum of a constant solution of~\eqref{eq:Jacobi-SU2} and a time-varying solution.

    \begin{itemize}
        \item If $\e(0) = \vec h_A = c_2 \vec H_2 - c_3 \vec H_3$, then
    \begin{equation}
    	\e(t) = -\sin(kt) c_2 \vec H_1 + \cos(kt) c_2 \vec H_2 - c_3 \vec H_3.
    \end{equation}
    \item If $\e(0) = \sqrt{1+\kappa^2} \pa_{H_1} = (\sqrt{1+\kappa^2} \pa_{H_1} + \vec H_2) - \vec H_2$, then
	\begin{align}
		\e(t) 
		&= 
		(\sqrt{1+\kappa^2} \pa_{H_1} + \vec H_2) + \sin(kt) \vec H_1 - \cos(kt) \vec H_2
		= 
        \\
        &=
		\sqrt{1+\kappa^2} \pa_{H_1}
		+
		\sin(kt) \vec H_1 + (1-\cos(kt)) \vec H_2
		.
	\end{align}
    \item If $\e(0) = -\sqrt{1+\kappa^2}(c_3\pa_{H_2}+c_2\pa_{H_3}) = (-\sqrt{1+\kappa^2}(c_3\pa_{H_2}+c_2\pa_{H_3}) + c_3\vec H_1) - c_3\vec H_1$, then
	\begin{align}
		\e(t) 
		&= 
		-
		\sqrt{1+\kappa^2}c_2\pa_{H_3}
		+
		c_3 (\vec H_1 -\sqrt{1+\kappa^2})\pa_{H_2} )
		-
		c_3 \big( 
			(\cos(kt) + kt \sin(kt)) \vec H_1 + (\sin(kt) - kt \cos(kt)) \vec H_2  
		\big)
		=
		\\
		&= 
		-
		\sqrt{1+\kappa^2} c_2 \pa_{H_3}
		-
		\sqrt{1+\kappa^2} c_3 \pa_{H_2}
		+
		c_3 ( 1 - \cos(kt) - kt \sin(kt) )\vec H_1
		-
		c_3 ( \sin(kt) - kt \cos(kt)) \vec H_2  
		.
	\end{align}
    
    \end{itemize}

    We see that any initial condition of the form 
    \begin{equation}
        a_1 \vh_A 
        + 
        a_2 \sqrt{1+\kappa^2} \pa_{H_1} 
        - 
        a_3 \sqrt{1+\kappa^2} ( c_3\pa_{H_2} + c_2\pa_{H_3} ), 
        \quad 
        a_i\in\R,
    \end{equation}
    results in a solution that has a nonzero component along $\vec H_3$, unless $a_1=0$. 
    So, since we want to satisfy \eqref{eq:eta-is-vert}, we can restrict to this case.
    Hence, we have to find $a_2,a_3\in\R$ such that
    \begin{align}
        a_3 c_3 \big( 
            1 - \cos(kt) - kt \sin(kt)
        \big)
        +
        a_2 \sin(kt)
        =
        0,
        \\
        -
        a_3 c_3 \big(
            \sin(kt) - kt \cos(kt)
        \big)
        +
        a_2 (1-\cos(kt)) 
        =
        0.
    \end{align}
    So, in order to have a non-zero solution, we must require that the determinant of the matrix defining the system of equations with respect to $a_2,a_3$ is zero:
    \begin{equation}
        \big( 1 - \cos(kt) - kt \sin(kt) \big) (1-\cos(kt))
        +
        \big( \sin(kt) - kt \cos(kt) \big) \sin(kt)
        =
        0,
    \end{equation}
    which simplified is 
    \begin{equation}
        2-2\cos(kt)-kt\sin(kt)=0,
    \end{equation}
    and applying the half-angle formula, we obtain
    \begin{equation}
        2\sin \left(\frac{kt}{2}\right) ^2 
        - 
        kt \sin \left(\frac{kt}{2}\right) \cos \left(\frac{kt}{2}\right)
        =
        0.
    \end{equation}
    Therefore, we have a conjugate point for $t=2\pi/k$ and for $\bar t$ that satisfies
    \begin{equation}
        \tan \left( \frac{k \bar t}{2} \right)
        =
        \frac{k \bar t}{2}.
    \end{equation}
\end{section}

\begin{section}{Second variation and the generalized Legendre condition}
\label{sec:sec-var}

\subsection{Definition of the end-point map and the Hessian}
\label{sec:definitions_first_second_variation}

When dealing with control problems, a central object is the Endpoint-map.

\begin{defn}[Endpoint-map and extended Endpoint-map]
    \label{def:sec-var}
    We will call Endpoint-map based at $q_0 \in M$ the map $F_{q_0,T}$ that to any control $u\in\U$ associate the Endpoint of the solution of \eqref{MainEq} with starting point $q_0$:
    \begin{equation}
        F_{q_0,T} : \U \to M, \quad F_{q_0,T}(u) = q_u(T;q_0).
    \end{equation}
    We will call extended Endpoint-map based at $q_0 \in M$ the map $E_{q_0, T}$ that to any control $u\in\U$ associate the couple made by $q_u(T;q_0)$ and the value $J(u)$:
    \begin{equation}
        \label{eq:def-sec-var}
        E_{q_0, T} : \U \to M \times \R, \quad E_{q_0,T}(u) = \big( q_u(T;q_0),J(u) \big).
    \end{equation}
\end{defn}
In the following, we will drop the subscript $q_0$ whenever there is no ambiguity.
\begin{rem}
    If $u$ is an interior point of $\U$, then it is possible to reformulate PMP in terms of the extended Endpoint-map. More precisely, if $u$ is a solution to the Optimal Control Problem \ref{prob:OCP}, then it is a critical point of the extended Endpoint-map, that is there is some non-trivial $(\la,\nu)\in T^* _{q(T)} M \times \R$ such that 
    \begin{equation}
    \label{eq:Lagr-multiplier}
        \big \langle 
            (\la,\nu) , D_{u} E_T[v] 
        \big\rangle
        = 
        \nu D_{u} J + \la D_{u} F_T 
        =
        0,
        \quad 
        \text{for every } v\in T_{u} \U,
    \end{equation}
    where $D_{u}$ denotes the differential at a point $ u \in \U$.
\end{rem}
As for classical optimization problem on finite dimensional spaces, once we have found the critical points of the functional that we want to minimize or maximize, we can look at the second derivative in order to distinguish minimum points from maximum and saddle points. 
In our case, we have to look to the second variation of the extended Endpoint-map. 

Now, we recall the main facts about the second variation.
For a general introduction to the second variation of Endpoint-maps coming from optimal control problems, see \cite{AgSa}, Chapter 20. See also \cite{Ag19}, Section 2, for a concise summary on this topic, and \cite{Baranz23,agrachev_stefano_ivan} for more properties of the second variation. 

First, we have to introduce the notion of Hessian. 
Let $\U$ be a Banach space and $F : \U \to M$ be a smooth map. 
Let $u\in \mathrm{int} \U$ be a critical point of $F$, that is $D_u F$ is not surjective. 
The \emph{Hessian} of a map $F$ at the point $u$ is the bilinear map
\begin{equation}
    \operatorname{Hess}_u F : \ker D_u F \times \ker D_u F \to T_{F(u) } M / \operatorname{im} D_uF 
\end{equation}
defined by
\begin{equation}
    \la \operatorname{Hess} (v,w) = W(V(a\circ F)),
\end{equation}
where $\la \in (\operatorname{im} D_u F)^\perp $, $a\in C^\infty (M)$ with $d_{F(u)}a = \la$, and $V,W \in \operatorname{Vec}(\U)$ with $V(u)=v, W(u)=w$. 
It is possible to show that indeed $W(V(a\circ F)) = V(W(a\circ F))$ and that this definition does not depend on the extensions $V,W$ and $a$ but just on the values $v,w,\la$ (see \cite{AgSa}).

More explicitly, in the particular case of $E_T$, one has that its Hessian is
\begin{equation}
    (\la,\nu) \operatorname{Hess} E_T = \big( \nu D^2 _u J - \langle \la , D^2 _u F_T \rangle \big)_{| \ker D_u E_T}
\end{equation}
which, if $\nu\neq0$, also coincides with the second derivative of $E_T$ restricted to the level set $F_T ^{-1}(u)$.
\begin{defn}[Locally open map]
    A map $F : \U \to M$ is called \emph{locally open} at $u\in\U$ if for every open neighbourhood $O_u$ of $u$ it holds that
    \begin{equation}
        F(u)\in \operatorname{int}F(O_u).
    \end{equation}
\end{defn}
Of course, if the Extended End-Point map $E_T$ is locally open at a point $u$, then the control $u$ cannot be optimal. Indeed, in this case, there are controls arbitrarily close to $u$ realizing the same endpoint with a smaller cost.
\begin{defn}
    The negative index of a quadratic form $Q : V \to \R$ is 
    \begin{equation}
        \operatorname{ind}_- Q = \sup \{ \dim L \mid L \text{ subspace of } V, \, \dim L <+\infty, \, Q_{|L\setminus\{0\}} <0 \}. 
    \end{equation}
\end{defn}
We have the following criterion for local openness of a map. 
\begin{thm}
    \label{thm:nec-cond-loc-open}
    Let $F : \U \to M$ be a smooth mapping and suppose that $u\in\U$ is a critical point of $F$ such that $\operatorname{im}D_u F$ has corank $r$. If
    \begin{equation}
        \operatorname{ind}_- \la \operatorname{Hess}_u F \geq r
        \quad 
        \forall \la \in \operatorname{im}D_uF ^\perp,
    \end{equation}
    then $F$ is locally open at $u$.
\end{thm}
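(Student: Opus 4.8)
The plan is to reduce the statement to a finite-dimensional problem and then to the classical openness criterion for perturbed quadratic maps, in the spirit of Agrachev's index theory. First I would cut the domain down to finite dimensions. Since $\operatorname{im}D_u F$ has finite codimension $r$ and $M$ is finite-dimensional, $(\operatorname{im}D_u F)^\perp$ is $r$-dimensional; let $S$ be its unit sphere, a compact set. For each $\lambda\in S$ the hypothesis gives an $r$-dimensional subspace $L_\lambda\subset\ker D_u F$ on which $v\mapsto\lambda\operatorname{Hess}_u F(v,v)$ is negative definite, and negative definiteness on the \emph{fixed} subspace $L_\lambda$ is an open condition in $\lambda$; so by compactness finitely many $\lambda_1,\dots,\lambda_k\in S$, with subspaces $L_1,\dots,L_k$, suffice. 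Picking also $w_1,\dots,w_{n-r}\in\U$ whose images span $\operatorname{im}D_u F$, I would let $V_0$ be the finite-dimensional span of the $L_j$ and the $w_i$ and set $G:=F|_{u+V_0}$. Then $\operatorname{im}D_u G=\operatorname{im}D_u F$ has corank $r$, $\ker D_u G=\ker D_u F\cap V_0\supseteq L_j$ for each $j$, and $\operatorname{Hess}_u G$ is the restriction of $\operatorname{Hess}_u F$; hence $\operatorname{ind}_-(\lambda\operatorname{Hess}_u G)\geq r$ for all $\lambda\in(\operatorname{im}D_u G)^\perp$. Since $F(O_u)\supseteq G(O_u\cap(u+V_0))$, it is enough to show $G$ is locally open at $u$.

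Next I would split off the submersion directions. Working in local coordinates on $M$ near $F(u)$ with $G(u)=0$ and $\operatorname{im}D_u G=\R^{n-r}\times\{0\}$, write $G=(G_1,G_2)$ valued in $\R^{n-r}\times\R^r$; then $D_u G_1$ is onto and $D_u G_2=0$. By the implicit function theorem $\Sigma:=G_1^{-1}(0)$ is, near $u$, a submanifold with $T_u\Sigma=\ker D_u G_1=\ker D_u G$. Setting $g:=G_2|_\Sigma:\Sigma\to\R^r$, the point $u$ is a critical point of $g$, and $\operatorname{Hess}_u g$ equals the $\R^r$-component of $\operatorname{Hess}_u G$ under $\R^n/\operatorname{im}D_u G\cong\R^r$, so the hypothesis becomes $\operatorname{ind}_-(\lambda\operatorname{Hess}_u g)\geq r$ for every nonzero $\lambda\in(\R^r)^*$. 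Because $G_1$ is a submersion at $u$, a routine argument (the fibres of $G_1$ foliate a neighborhood of $u$, $G$ restricted to each nearby fibre is a small $C^1$-perturbation of $g$, and one combines openness of the submersion $G_1$ with openness of $g$) shows that $G$ is locally open at $u$ as soon as $g$ is.

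Finally I would prove local openness of the critical map $g$. Writing $g(v)=\tfrac12 Q(v)+R(v)$ with $Q(v)=\operatorname{Hess}_0 g(v,v)$ a vector-valued quadratic form on $\R^N$, $N=\dim\Sigma$, and $R(v)=o(|v|^2)$, I must show $0\in\operatorname{int}g(B)$ for every small ball $B$ about $0$. The prototype is $r=1$: the condition then says the scalar form $Q$ is indefinite, so along two rays $t\mapsto g(tv_\pm)=\tfrac{t^2}{2}Q(v_\pm)+o(t^2)$ the values have opposite, definite signs for small $t>0$, and the intermediate value theorem produces every value near $0$. For general $r$ I would argue by topological degree: if $g$ misses some small $c$ on a tiny ball $\{|v|\le\rho\}$, then $v\mapsto g(v)-c$ maps that ball into $\R^r\setminus\{0\}$, so its restriction to $\{|v|=\rho\}$ has Brouwer degree zero; but this restriction is homotopic, for $\rho$ small (using $g(v)-\tfrac12 Q(v)=o(|v|^2)$) and $|c|$ suitably small, to $v\mapsto\tfrac12 Q(v)-c$ on $\{|v|=\rho\}$, and the bound $\operatorname{ind}_-(\lambda Q)\geq r$ for all $\lambda\neq 0$ — which forbids $\lambda Q$ from ever being positive semidefinite — is exactly what forces this last map to have \emph{nonzero} degree, a contradiction. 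Unwinding the two reductions then yields local openness of $F$ at $u$.

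The reductions in the first two steps are routine bookkeeping; the main obstacle is the degree-theoretic core of the last step: showing that a quadratic map all of whose covector-compositions have negative index at least $r$ is open at the origin, and that this openness survives the higher-order remainder $R$ uniformly on small spheres. This is precisely where the numerical value $r$ (the corank) enters essentially, via the relation between $\operatorname{ind}_-$ of the Hessian and the degree of the associated sphere map, and it is the part that cannot be bypassed by soft arguments.
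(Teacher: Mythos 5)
The paper itself does not prove this theorem: it is stated without proof and then used, being a classical openness criterion from \cite{AgSa} (Chapter~20). So your proposal has to be assessed against the reference proof rather than against anything in this paper.

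Your first two steps (passing to a finite-dimensional subspace of controls, then splitting off the submersion directions) are routine and essentially sound. The genuine gap is the core of the last step. You claim that if $g$ misses a small value $c$ on a tiny ball, then the restriction of $v\mapsto g(v)-c$ to the sphere $\{|v|=\rho\}\subset\R^{N}$ (with $N=\dim\ker D_uG$) has Brouwer degree zero, while the index bound forces nonzero degree. But Brouwer degree of a sphere map is defined only when source and target spheres have equal dimension, i.e.\ $N=r$, and your own hypothesis rules this out: applying $\operatorname{ind}_-(\lambda Q)\geq r$ to both $\lambda$ and $-\lambda$, and using $\operatorname{ind}_-(\lambda Q)+\operatorname{ind}_-(-\lambda Q)\leq N$, gives $N\geq 2r>r$. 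If instead one reads ``degree'' as the homotopy class in $\pi_{N-1}(S^{r-1})$ --- which is the actual obstruction to extending the restricted map over the ball --- the argument still collapses: that group can vanish in cases covered by the theorem. For instance $r=2$ forces $N\geq 4$, and $\pi_{N-1}(S^{1})=0$ for all $N\geq 3$, so \emph{every} map $S^{N-1}\to\R^{2}\setminus\{0\}$ extends over $B^{N}$ and there is no contradiction to extract. The hypothesis $\operatorname{ind}_-(\lambda Q)\geq r$ is simply invisible to ordinary homotopy of the (unnormalized) sphere map. The correct invariant exploits the $\Z_2$-symmetry $Q(-v)=Q(v)$, so the map descends to $\mathbb{RP}^{N-1}$, and the reference proof (Agrachev's topology of quadratic maps, as in \cite{AgSa}) runs through a genus/$\Z_2$-cohomological index together with a Lagrange-multiplier analysis of the restricted quadratic map --- a substantially more delicate invariant than a single-sphere degree count.

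A secondary gap: you assert that local openness of $g=G_2|_{G_1^{-1}(0)}$ together with the submersivity of $G_1$ ``routinely'' yields local openness of $G$, because $G_2$ on nearby fibres of $G_1$ is a small $C^1$-perturbation of $g$. Local openness at a critical point is not stable under $C^1$-perturbation (compare $x\mapsto x^3$ with $x\mapsto x^3-\varepsilon x$), so one cannot transfer openness from the central fibre to nearby fibres by soft arguments. One must run the core quantitative estimate uniformly over all nearby fibres; this comes for free once the topological-index argument is set up properly, but it is a hole in the chain of reductions as you have written them.
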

Now, we go back to the optimal control problem. 
We see that, if $\operatorname{ind}^- (\la,\nu)\operatorname{Hess}_u E_T = +\infty$ for a critical point $u$, then we immediately know that $E_T$ is locally open at $u$, hence $u$ is not optimal. 
So, our first goal is to compute the Hessian of $E_T$ and then to find a condition that ensures $\operatorname{ind}^- (\la,\nu)\operatorname{Hess}_u E_T < +\infty$.
Moreover, in Subsection \ref{subsec:derivation_jacobi_equation}, we show how that it is possible to compute $\operatorname{ind}^- (\la,\nu)\operatorname{Hess}_u E_T $ finding the solutions of the Jacobi equation, which
together with Theorem \ref{thm:nec-cond-loc-open} will prove Theorem \ref{thm:nec-cond-conj-points}.

From now on we fix a critical point $u\in \U$ of $E_T$ and we assume that it is a singular control of the form \eqref{eq:sing-control}. Moreover, we denote by $\la$ its corresponding singular extremal, which is solution of the Equation \eqref{eq:sing-ham-syst}. For sake of brevity, we will also use the notation 
$$
    Q_T \coloneqq (\la,\nu)\operatorname{Hess}_u E_T.
$$

\subsection{Explicit expression for the second variation form}
\label{subsec:explicit_second_variation}

The formula of the Hessian of the endpoint map $E_T$ for a general control system is computed in \cite{AgSa}. 
It reads as:
\begin{equation}
\label{eq:QT_definition}
    Q_T(v)
    =
    \int_0 ^T 
    \left(
        \frac{d^2 H(\la_0,u(t))}{du^2}(v(t),v(t)) + 
        \sigma_{\la_0} 
        \left(
            Z_t v(t) ,
            \int_0 ^t 
            Z_s v(s) \, ds
        \right)
    \right)
    dt,
    \quad 
    v\in \ker D_{u} E_T,
\end{equation}
where the first summand is the Hessian of the PMP Hamiltonian with respect to the control variables. The map $Z_t : T_{u(t)} U \to T_{\la_0}(T^*M)$ was defined in~\eqref{eq:def-Zt}, but we recall the definition here:
\begin{equation}
        Z_t v(t)
        \coloneqq
        \Big\langle
            v(t)
        ,
            \big[(\Phi_t ^{-1})_*
            \vh_I\big](\la_0)
        \Big\rangle,
\end{equation}
where $\Phi_t$ is the flow on $T^*M$ of the Hamiltonian system \eqref{eq:sing-ham-syst} defined by the PMP Hamiltonian. We also recall that this quadratic form must be evaluated on the set of admissible variations given by the kernel of the first differential. In symplectic language a variation $v\in T_{u} \U = L^{\infty} ([0,T], \R^m)$ is admissible if and only if (see~\cite{AgSa})
\begin{equation}
\label{eq:first_variation}
    v\in \ker D_{u} E_T \quad \iff \quad  \int_0 ^T Z_t v(t) \, dt \in \Pi_0,
\end{equation}
where $\Pi_0 = T_{\lambda_0}(T_{q_0}^*M)$ as before is the vertical fibre.

It is important to note that both the form $Q_T$ and the first differential in~\eqref{eq:first_variation} are continuous in a weaker norm of $L^2([0,T], \R^m)$. As a first step we extend them to this weaker topology.

As before we separate the component of $v$ in the direction $h_I$, which is the singular control, from all the others. Namely, we write $v=\rho h_I + w$, where $v\in \ker D_{u}E_T$ and $\langle h_I,w\rangle=0$. A direct computation, using the short notation $Z_I(t) = Z_th_I(t)$, shows that
$$
\frac{d^2 H(\la_0,u(t))}{du^2}(v,v) = \frac{|v|^2}{r} - \frac{\langle h_I,v\rangle^2}{r} = \frac{|w|^2}{r}. 
$$
Then we obtain
\begin{align}
    \label{eq:sec-var-sing}
    Q_{T}(v)
    &=
    \int_0 ^{T} 
    \left(
        \frac{|w(t)|^2}{r} 
        +
        \sigma_{\la_0} 
        \left(
            Z_t \big( \rho(t) h_I(t) + w(t) \big) ,
            \int_0 ^t 
            Z_s \big( \rho(s) h_I(t) + w(s) \big) \, ds
        \right)
    \right)
    dt
    =
    \\
    &=
    \int_0 ^{T} 
        \frac{|w(t)|^2}{r} 
    dt
    \;
    +
    \\
    \label{int1}
    &+
    \int_0 ^{T} 
    \sigma_{\la_0}
    \left(
            \rho(t) Z_I(t) 
            ,
            \int_0 ^t
            \rho(s)
            Z_I(s)
            ds
    \right)    
    dt
    +
    \\
    \label{int2}
    &+
    \int_0 ^{T} 
    \sigma_{\la_0}
    \left(
            \rho(t) Z_I(t) 
            ,
            \int_0 ^t
            Z_s w(s)
            ds
    \right)    
    dt
    +
    \\
    \label{int3}
    &+
    \int_0 ^{T} 
    \sigma_{\la_0}
    \left(
            Z_t w(t)
            ,
            \int_0 ^t
            \rho(s)
            Z_I(s)
            ds
    \right)    
    dt
    +
    \\
    \label{int4}
    &+
    \int_0 ^{T} 
    \sigma_{\la_0}
    \left(
            Z_t w(t)
            ,
            \int_0 ^t
            Z_s w(s)
            ds
    \right)    
    dt
    .
\end{align}

The form $Q_T$ is not coercive in the $L^2$-topology. 
To see this, we can consider the family variations of the form $w\equiv0$ and $\rho_\eps(t)=\rho_1(t/\eps)$, $0<\eps \leq 1$ and for any admissible $\rho_1$. 
A direct computation shows that $Q_T=O(\eps^2)$ for $\eps \to 0$, while $\|\rho_\eps\|=O(\eps)$.
To obtain coercivity we introduce as a new variable
   \begin{equation}
        \phi(t)
        =
        \int_0 ^t \rho(s) \, ds
    \end{equation} 
and integrate by parts to replace the $\rho$ variable with $\phi$ variable everywhere. For example, if we integrate by parts the first order condition \eqref{eq:first_variation}, we get 
   \begin{align}
        \int_0 ^{T}
            Z_t v(t) \,
        dt
        &=
        \int_0 ^{T}
            Z_t 
            \left( \rho(t) h_I(\la_t) + w(t) \right)
            \,
        dt
        =
        \\
        \label{eq:admissible_variations_after_integration}
        &=
        \phi(T) Z_I(T)
        -
        \int_0 ^{T}
        \phi(t) 
        \dot Z_I(t)
        dt
        +
        \int_0 ^{T}
            Z_t w(t)
            \,
        dt
        =
        \\
        &=
        \phi(T) Z_I(T)
        +\int_0^T \mathcal{Z}_t v(t) dt \in \Pi_0
        ,
    \end{align}
where we have shortened
\begin{equation}
    \mathcal{Z}_t v(t) := -\phi(t) \dot Z_I(t) + Z_t w(t).
\end{equation}

 Similarly, integrating parts in the formula for $Q_T$  (see Appendix~\ref{app:contazzi-var-sec}) we obtain an expression: 
    \begin{equation}
    \label{eq:QT_after_integration_by_parts}
        Q_{T}(v_1,v_2)
        =
        \phi_1(T) 
            \sigma 
            \Big( 
                Z_I(T) 
                \,,\,  
                \int_0^T \mathcal{Z}_t v_2(t) dt
            \Big)
        +
        \int_0 ^{T}
        \langle v_1(t) , l_t v_2(t) \rangle 
        dt
        +
        \int_0 ^{T}
            \sigma 
            \left(  
            \mathcal{Z}_t v_1 (t)
            \,,\, 
            \int_0 ^t \mathcal{Z}_s v_2 (s)\, ds
            \right)
        dt
    \end{equation}
where
\begin{equation}
\label{eq:def-lt}
    l_t
    \coloneq
    \begin{pmatrix}
        \frac{\I}{r} & \sigma \big(Z_t \, \cdot , Z_I(t)\big) \\
        \sigma \big(Z_t \, \cdot , Z_I(t)\big)^T & \sigma \big(Z_I(t) , \dot Z_I(t)\big)
    \end{pmatrix}.
\end{equation}
and $l_t\in \R^{(m+1)\times (m+1)}$ with $\I\in \R^{m\times m}$.

From here we can notice that the operator in~\eqref{eq:admissible_variations_after_integration} and the quadratic form~\eqref{eq:QT_after_integration_by_parts} will be continuous on the space of variations 
$$
(\phi(T),\phi,w)\in \mathbb{R} \times L^2 ([0,T]) \times L^2 ([0,T], \R^{m-1}) .
$$
Here $\phi(T)$ and $\phi$ are considered as independent variations. It is not a problem, since the image of the map
$$
\rho\mapsto \left(\int_0^T \rho(t) dt, \int_0^\cdot \rho(t) dt\right)
$$
is dense inside $\mathbb{R} \times L^2 ([0,T],\mathbb{R})$. We will slightly abuse the notation from now on and write $v = (w,\phi)$ for the admissible variations.

\subsection{Proof of Theorem~\ref{thm:nec-cond}}
\label{subsec:legendre_proof}

To prove Theorem~\ref{thm:nec-cond} we restrict the form $Q_T$ in~\eqref{eq:QT_after_integration_by_parts} to the finite-dimensional subspace with $\phi(T) =0$. Then, essentially we have 

\begin{equation}
    \label{eq:sec-var-desing}
    Q_T(v)
    =
    \int_0 ^{T} \langle v(t),l_t v(t) \rangle dt
    +
    \text{ other terms.}
\end{equation}

Following the approach in \cite[Chapter 20]{AgSa}, we have to check that the form $Q_T$ has a finite negative index. 
\begin{prop}
\label{prop:segno-variazione-seconda-e-forma-di-Legendre}
If the quadratic form $Q_T$ has finite negative index, then the Legendre form $l_t$ is non-negative for almost all $t\in[0,T]$.
 Moreover, if the Legendre form $l_t$ is strictly positive definite for almost every $t\in[0,T]$, then for every $\tau\in[0,T)$ there is $\eps > 0$ such that the form $Q_T$ restricted to variations supported in $[\tau,\tau+\eps]$ is strictly positive definite. In particular, $Q_T$ has finite negative index.
\end{prop}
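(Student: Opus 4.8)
The argument is the localization argument for the Legendre condition, adapted to the form $Q_T$ of~\eqref{eq:QT_after_integration_by_parts}, in the spirit of~\cite[Chapter~20]{AgSa}. The first thing I would record is a regularity remark: since the vector fields are smooth, the flow $\Phi_t$ of~\eqref{eq:sing-ham-syst} is smooth in $t$, hence the linear map $Z_t$, the vectors $Z_I(t),\dot Z_I(t)$, the map $\mathcal Z_t$ and the matrix $l_t$ all depend continuously (indeed smoothly) on $t\in[0,T]$. Consequently $t\mapsto l_t$ is continuous, so ``$l_t\ge 0$ for a.e.\ $t$'' is the same as ``$l_t\ge 0$ for every $t$'' (and likewise with strict inequality, in which case compactness of $[0,T]$ upgrades it to a uniform bound $l_t\ge c_0\I>0$); and from the continuity of $t\mapsto \mathcal Z_t$ one gets a pointwise bound $\|\mathcal Z_t v(t)\|\le C|v(t)|$, so the double-integral term in~\eqref{eq:QT_after_integration_by_parts} is given by an integral kernel bounded by $C^2\|\sigma\|$ on $[0,T]^2$ and thus defines a Hilbert--Schmidt (in particular compact) operator on $L^2$.

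For the first statement I would argue by contradiction. Assume $\operatorname{ind}_- Q_T<\infty$ but $\langle \bar v, l_{t_0}\bar v\rangle<0$ for some $t_0$ and some unit vector $\bar v$; by continuity there are $c>0$ and an interval $I\ni t_0$ with $\langle \bar v, l_t\bar v\rangle\le -c$ on $I$. Since restricting a quadratic form to a subspace of finite codimension changes its negative index by at most that codimension, finiteness of the index of $Q_T$ restricted to the admissible subspace $\ker D_u E_T$ forces $\operatorname{ind}_- Q_T<\infty$ on the whole variation space; so it suffices to exhibit, for every $N$, an $N$-dimensional subspace on which $Q_T$ is negative definite. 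Pick $N$ pairwise disjoint subintervals $I_1,\dots,I_N\subset I$ and set $v_j=\bar v\,\mathbbm{1}_{I_j}/\sqrt{|I_j|}$. Then the multiplication part of~\eqref{eq:QT_after_integration_by_parts} gives $\int\langle v_j, l_t v_j\rangle\le -c$, the double-integral part evaluated on $v_j$ is bounded in absolute value by $C^2\|\sigma\|\,(\int|v_j|)^2\le C^2\|\sigma\|\,|I_j|$, and the boundary term vanishes if we keep the $\R$-component $\phi_j(T)$ equal to $0$; the cross terms between $v_i$ and $v_j$ with $i\ne j$ vanish for the multiplication part (disjoint supports) and are $O(\sqrt{|I_i||I_j|})$ for the double-integral part by the same estimate. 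Choosing all the $I_j$ short enough, $Q_T$ becomes negative definite on $\operatorname{span}\{v_1,\dots,v_N\}$, contradicting $\operatorname{ind}_- Q_T<\infty$ since $N$ is arbitrary.

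For the second statement, assume $l_t\ge c_0\I>0$ on $[0,T]$. If $v$ is supported in $[\tau,\tau+\eps]$ the boundary term in~\eqref{eq:QT_after_integration_by_parts} is zero and
\[
Q_T(v)=\int_\tau^{\tau+\eps}\langle v(t),l_t v(t)\rangle\,dt+\int_\tau^{\tau+\eps}\sigma\Big(\mathcal Z_t v(t),\int_\tau^{t}\mathcal Z_s v(s)\,ds\Big)dt\ \ge\ c_0\|v\|_{L^2}^2-C^2\|\sigma\|\Big(\int_\tau^{\tau+\eps}|v|\Big)^2\ \ge\ \big(c_0-C^2\|\sigma\|\,\eps\big)\|v\|_{L^2}^2,
\]
so for $\eps<c_0/(C^2\|\sigma\|)$ the form $Q_T$ is coercive, hence strictly positive definite, on variations supported in $[\tau,\tau+\eps]$. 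Finally, the uniform bound $l_t\ge c_0\I$ means $Q_T=M_l+K$ with $M_l$ a multiplication operator $\ge c_0\I$ and $K$ the compact operator of the first paragraph; peeling off the finitely many eigenvalues of $K$ that are $\le -c_0/2$ produces a subspace of finite codimension $N_0$ on which $\langle v,Kv\rangle\ge-\tfrac{c_0}{2}\|v\|^2$, whence $Q_T\ge \tfrac{c_0}{2}\I$ there, so $\operatorname{ind}_- Q_T\le N_0<\infty$. (Alternatively one partitions $[0,T]$ into finitely many intervals on which $Q_T$ is coercive and imposes the finitely many linear conditions that the interaction vectors $\int\mathcal Z_s v_i\,ds$ vanish on each piece, getting again a finite-codimension subspace of positivity.)

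The only genuinely delicate point is the compatibility of the concentration argument with the admissibility constraint and with the bookkeeping of the three groups of variations $(\phi(T),\phi,w)$: this is handled by the elementary fact about codimensions quoted above, together with the remark that for variations localized strictly inside $[0,T]$ the $\R$-component $\phi(T)$ can (and should) be kept $0$ so that the boundary term drops out. Beyond that the proof is routine once the continuity and boundedness of $Z_t$, $\mathcal Z_t$, $l_t$ in $t$ have been established.
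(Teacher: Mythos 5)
Your proof is correct and takes essentially the same localization route that the paper (following Proposition 20.1 of \cite{AgSa}) only sketches: concentrate variations on short time intervals where the Legendre form is negative to manufacture arbitrarily large negative subspaces, and conversely exploit dominance of the Legendre term over the compact (Hilbert--Schmidt) double-integral term to obtain coercivity on short intervals. Your concrete implementation---disjoint subintervals with a Gram-matrix estimate for the necessity part, setting $\phi(T)=0$ so the boundary term drops out, and a compact-perturbation argument for the finite-index claim---is a more explicit and self-contained rendering of the rescaling/asymptotic-expansion argument the paper gestures at, but the underlying mechanism is identical.
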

The proof is analogous to the proof of Proposition 20.1 of \cite{AgSa} and we do not repeat it, but only give the main idea. The proof is by contradiction. 
We assume, that there is a small open interval $I\subset [0,T]$ on which the Legendre form is not non-negative definite. 
Then choose $t\in I$ and consider variations concentrated on a small interval $[t,t+\varepsilon]$. 
This will give us a family of quadratic forms parametrized by the parameter $\varepsilon>0$. 
A suitable rescaling can be used to transform it into a continuous family of forms on $L^2$ functions on the interval $[0,1]$.
By writing down the asymptotic expansion in $\varepsilon$, we find that the dominating term is given exactly by the integral of the Legendre form (it is already written as highest order term in \eqref{eq:sec-var-desing}).
By assumption, at every moment of time the Legendre form has a non-trivial subspace on which it is negative-definite. 
By taking any bounded variation $v$, such that for a.e. moment of time $t$ it belongs to the negative subspace of the Legendre form, we, hence, obtain a variation that for $\varepsilon>0$ small enough will be on the negative subspace of the Hessian.
We can multiply this variation $v$ by any smooth function and it will still belong to the negative subspace of the Hessian. 
Therefore, its Morse index is infinite and the extremal trajectory is not optimal.

Notice that technically we need to consider the form $\langle l_t v, v\rangle $ restricted to the subspace $\langle h_I(t),w\rangle = 0$. Keeping an extra variable simplifies the exposition, and we will show that this additional direction does not change the sign of the matrix $l_t$.
Let us now see when $l_t \geq 0$. Since $r>0$ by assumption, by Sylvester's criterion it suffices to check that the determinant of the matrix is positive. Notice that 
\begin{equation}
    \begin{pmatrix}
        \frac{\I}{r} & a \\
        a^T & \al 
    \end{pmatrix}
    =
    \begin{pmatrix}
        \I & 0 \\
        ra^T & 1
    \end{pmatrix}
    \begin{pmatrix}
        \frac{\I}{r} & 0 \\
        0 & \al - r|a|^2 
    \end{pmatrix}
    \begin{pmatrix}
        \I & ra \\
        0 & 1
    \end{pmatrix}
\end{equation}
where $a=\sigma \big(Z_t \, \cdot , Z_I(t)\big)$ and $\al =\sigma \big(Z_I(t) , \dot Z_I(t))\big) $. Hence, the previous matrix is non-negative if and only if
\begin{equation}
\label{eq:sec-var-def-pos} 
    \sigma \left(Z_I(t) , \dot Z_I(t)\right)
    -
    r
    \|\sigma \big(Z_t \, \cdot , Z_I(t)\big)\|^2
    \geq
    0,
\end{equation}
We point out that the form $l_t$ is non-negative definite for $(w,\phi)\in (T_{u(t)}U \cap h_I ^\perp) \times \R$ if and only if it is non-negative definite for $(w,\phi)\in T_{u(t)}U \times \R$. 
One implication is obvious. 
Concerning the other one, if you take $(w,\phi)\in T_{u(t)}U \times \R$, then, writing $w=\tilde w + \rho h_I$, with $\tilde w\in h_I^\perp$, we have
\begin{equation}
    (\tilde w +\rho h_I , 0 ) l_t (0,\phi)^T
    =
    \phi
    \sigma (Z_t(\tilde w +\rho h_I) , Z_t h_I)
    =
    \phi
    \sigma (Z_t(\tilde w ) , Z_t h_I)
    =
    (\tilde w, 0 ) l_t (0,\phi)^T.
\end{equation}
Hence, assuming $(\tilde w , \phi ) l_t (\tilde w,\phi)^T \geq 0$, we obtain that also $(w , \phi ) l_t (w,\phi)^T = (\tilde w , \phi ) l_t (\tilde w,\phi)^T \geq 0$, as we claimed.

\medskip

Let us rewrite condition~\eqref{eq:sec-var-def-pos} explicitly in terms of the Hamiltonians $h_i$. Recall, that in Proposition~\ref{prop:equivalence_legendre} we claimed that $l_t \geq 0$ is equivalent to $h_{c0c}(t) 
\geq 0$. So, to prove the Proposition, we only need to shows that~\eqref{eq:sec-var-def-pos} is equivalent to $h_{c0c}(t) 
\geq 0$.

\begin{proof}[Proof of Proposition~\ref{prop:equivalence_legendre}]
From the definition of $Z_t$, we have
\begin{equation}
    Z_I(t)
    =
    (\Phi_t ^{-1})_*
    \big(
        \langle 
            h_I(\la_t)
            , 
            \vh_I
        \rangle
    \big)
    .
\end{equation}
If we differentiate in $t$, we obtain
\begin{align}
    \dot Z_I(t)
    &=
        {
            (\Phi_t ^{-1})_*
            \left\langle
                \frac{d}{dt}  [h_I(\la_t)]
                , 
                \vh_I
            \right\rangle
        }
        +
        \,
        {
            (\Phi_t ^{-1})_*
            \left[
            \vh_0 + \langle u(t) , \vh_I \rangle 
            \,,\,
            \left\langle
                h_I(\la_t)
                ,
                \vh_I
            \right\rangle
            \right]
        }
    \\
    &=
    \label{eq:somma-molto-lunga}
    {
            (\Phi_t ^{-1})_*
            \left\langle
                \{h_0 + \langle u(t) , h_I\rangle , h_I\}(\la_t)
                , 
                \vh_I
            \right\rangle
        }
        +
        \,
        {
            (\Phi_t ^{-1})_*
            \left[
            \vh_0  
            \,,\,
            \left\langle
                h_I(\la_t)
                ,
                \vh_I
            \right\rangle
            \right]
        }
    \\
    &=
    (\Phi_t ^{-1})_*
    \left(
    {
        \sum_{i=1} ^m
        \left(
            h_{0i}
            +
            r(t) \sum_{j=1} ^m 
                    h_j(\la_t) h_{ji}
        \right)
        \vh_i
    }
    +
    {
    \sum_{i=1} ^m
        h_i(\la_t) \vh_{0i} 
    }
    \right)
    =
    \\
    &=
    (\Phi_t ^{-1})_*
    \left(
        \vh_{0c}
        +
        r(t)
        \sum_{i,j=1} ^m 
            h_j(\la_t) h_{ji} \vh_i 
    \right)
    .
\end{align}
So, we can compute $\sigma_{\la_0}\left( Z_I(t) , \dot Z_I(t)\right)$:
\begin{align}
    \sigma_{\la_0}
    \left( 
        Z_I(t) , \dot Z_I(t)
    \right)
    &=
    \sigma_{\la(t)}
    \left( 
        \sum_{i=l} ^m
            h_l(\la_t) \vh_l
        \,,\, 
        \vh_{0c}
        +
        r(t)
        \sum_{i,j=1} ^m 
            h_j(\la_t) h_{ji} \vh_i
    \right)
    =
    \\
    &=
    h_{c0c}
    +
    r
    \sum_{i,j,l=1} ^m 
    h_j h_l h_{ji} h_{li}.
\end{align}
The second sum is (every function now is evaluated in $\la(t)$)
\begin{align}
    \label{eq:conto-lungo2}
    \sum_{i,l,j=1}^m
        h_i  h_{jl}  h_j  h_{il} 
    &=
    \sum_{i,l=1}^m
        h_i  h_{il} 
        \sum_{j} ^m
            h_{jl}  h_j 
    =
    \sum_{l=1}^m
    \left[
        \left(
            \sum_{j} ^m
                h_{jl}  h_j 
        \right)
        \sum_i ^m
            h_i  h_{il} 
    \right] 
    =
    \sum_{l=1}^m
        \left(
            \sum_{j} ^m
                h_{jl}  h_j 
        \right)^2    
    \\
    &=
    \sum_l
    \{h_l, h_c\}^2
    =
    \left| \{h_c,h_I\}\right|^2
    =
    \left| h_{cI}\right|^2
    ,
\end{align}
where $h_{cI}$ is the vector whose $i^{\text{th}}$ component is $h_{ci}$.
Thus, putting everything together, we obtain
\begin{equation}
\label{eq:comp-form1}
    \sigma_{\la_0}
    \left( 
       Z_I(t), \dot Z_I(t)
    \right)
    =
    h_{c0c}(\la_t) + r(\la_t) |h_{cI}(\la_t)|^2.
\end{equation}
On the other hand, for any admissible variation $v$ the second addendum in \eqref{eq:sec-var-def-pos} is
\begin{align}
    \sigma_{\lambda_0}(Z_t v, Z_I(t))
    &=
    \sigma_{\lambda_0}(
        \langle 
            v(t), (\Phi^{-1} _t)_* \vh_I
        \rangle
        ,
        \langle 
            h_I(\la_t), (\Phi^{-1} _t)_* \vh_I
        \rangle
    )
    =
    \sum_{i,j}
    v_i(t)h_j(\la_t)
    \sigma_{\lambda(t)}(
        \vh_i
        ,
        \vh_j
    )
    =
    \\
    &=
    \sum_{i,j}
    v_i(t) h_j(\la_t) h_{ij}.
\end{align}
Thus
\begin{equation}
\label{eq:comp-form2}
    |\sigma_{\lambda_0}(Z_t \cdot, Z_I(t))|^2
    =
    \sum_{i}
    \left(
        \sum_j h_j h_{ij}
    \right)^2
    =
    \left| h_{cI}\right|^2.
\end{equation}
Finally, using \eqref{eq:comp-form1} and \eqref{eq:comp-form2}, the left hand side of Equation \eqref{eq:sec-var-def-pos} reads
\begin{align}
    \sigma_{\lambda_0} \left(Z_I(t) , \dot Z_I(t)\right)
    -
    r
    \|\sigma_{\lambda_0} \big(Z_t \, \cdot , Z_I(t)\big)\|^2
    =
    h_{c0c}
    +
    r|h_{cI}|^2
    -
    r |h_{cI}|^2
    =
    h_{c0c},
\end{align}
which proves the result.
\end{proof}

\subsection{Derivation of the Jacobi equation and proof of Theorem \ref{thm:nec-cond-conj-points}}
\label{subsec:derivation_jacobi_equation}
We consider a singular extremal for which the strong Legendre condition is satisfied. 
In this subsection we shorten $\sigma = \sigma_{\lambda_0}$.

In Proposition \ref{prop:segno-variazione-seconda-e-forma-di-Legendre}, we saw that the second variation is positive definite on short intervals if strong generalized Legendre condition holds.
Now, we describe how we can quantify the length of these intervals.
We define 
\begin{align}
    &\mathcal{V}_t
    \coloneqq
    \{
        (\phi(t),\phi,w)\in \mathbb{R} \times L^2 ([0,T]) \times L^2 ([0,T], \R^{m-1})
        \mid 
        \operatorname{supp} \phi, \operatorname{supp} w \subseteq [0,t]
    \}
    \cap\ker D_u E_T
    \\[5pt]
    &t_* 
    \coloneqq 
    \sup \{ 
        t\in[0,T] \mid {Q_T}_{|\mathcal{V}(t)} \text{ is positive definite}
    \}.
\end{align}
If $t_*<T$, we say $t_*$ is conjugate to $0$ along the trajectory $\la$. 
The remaining part of this Subsection is devoted to the proof of the equivalence between this notion of conjugate time and the one already given in \ref{def:conjugate_point_final_condition_on_the_fibre} are equivalent.

We have the following result.
\begin{prop}
    Let $\la$ be a singular extremal of Problem \ref{prob:OCP} satisfying strong generalized Legendre condition. Suppose that $t_* \in (0,T]$. 
    Then ${Q_{t_*}}|_{\mathcal{V}(t)}$ is singular. 
\end{prop}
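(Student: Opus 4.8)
The plan is to show that the restriction of $Q_T$ to $\mathcal V(t_*)$ (the form $Q_{t_*}|_{\mathcal V(t_*)}$ of the statement) has a nontrivial kernel; this is the infinite-dimensional version of the elementary fact that a non-increasing family of quadratic forms which is positive definite for $t<t_*$ and fails to be so at $t_*$ must be degenerate at $t_*$, and essentially all of the work lies in the accompanying functional analysis.

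First I would record the structure of~\eqref{eq:QT_after_integration_by_parts}. I extend $Q_T$ and the admissibility map $v\mapsto\operatorname{constr}(v):=\phi(T)Z_I(T)+\int_0^T\mathcal Z_t v(t)\,dt$ continuously to $L^2([0,T],\R^m)$ and regard each $\mathcal V(t)=\{v:\operatorname{supp}v\subseteq[0,t]\}\cap\ker D_u E_T$ as a closed subspace; these are nested and increasing in $t$. By~\eqref{eq:SGLC} and Proposition~\ref{prop:equivalence_legendre}, together with continuity of $t\mapsto l_t$ on the compact interval $[0,T]$, one has $l_t\ge c\,\mathrm{Id}$ for a.e.\ $t$ with a uniform $c>0$; hence the Legendre part $\mathcal L(v)=\int_0^T\langle v,l_t v\rangle\,dt$ is coercive on $L^2$, while the Volterra term and the finite-rank boundary term of~\eqref{eq:QT_after_integration_by_parts} form a compact operator $\mathcal K$. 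Thus $Q_T|_{\mathcal V(t)}$ is, for every $t$, a uniformly coercive form plus a compact one; consequently the infimum $\mu(t):=\inf\{Q_T(v):v\in\mathcal V(t),\ \|v\|=1\}$ is attained, one has $Q_T(v)\ge\mu(t)\|v\|^2$ on $\mathcal V(t)$, and $Q_T|_{\mathcal V(t)}$ is positive definite iff $\mu(t)>0$. The function $\mu$ is non-increasing in $t$, $\mu(t)>0$ for $t$ small by Proposition~\ref{prop:segno-variazione-seconda-e-forma-di-Legendre}, and $t_*=\sup\{t:\mu(t)>0\}$. The crux is to prove $\mu(t_*)=0$.

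For $\mu(t_*)\ge 0$: fix $v\in\mathcal V(t_*)$ and $t_k\uparrow t_*$, and set $v_k:=v|_{[0,t_k]}+\delta_k$, with $\delta_k$ supported in $[0,t_k]$ chosen so that $v_k\in\mathcal V(t_k)$. Such $\delta_k$ exists with $\|\delta_k\|\to 0$: the admissibility defect of $v|_{[0,t_k]}$ equals $-\operatorname{constr}(v|_{[t_k,t_*]})\bmod\Pi_0$, which tends to $0$ as $t_k\uparrow t_*$ and which, being also equal to $\operatorname{constr}(v|_{[0,t_k]})\bmod\Pi_0$, lies in the image $R_{t_k}$ of $\operatorname{constr}(\cdot)\bmod\Pi_0$ on $[0,t_k]$-supported variations; since $(R_t)_t$ is a non-decreasing family of subspaces of a fixed finite-dimensional space it stabilises for $t$ close to $t_*$, so a uniformly bounded right inverse is available. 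Then $v_k\to v$ in $L^2$, and by $L^2$-continuity of $Q_T$, $Q_T(v)=\lim_k Q_T(v_k)\ge\lim_k\mu(t_k)\|v_k\|^2\ge 0$. For $\mu(t_*)\le 0$, I argue by contradiction: if $\mu(t_*)=c_0>0$ then, extending the time horizon slightly in the case $t_*=T$, take $v\in\mathcal V(t_*+\eps)$, write $v=v_1+v_2$ with $v_2=v|_{[t_*,t_*+\eps]}$ supported in a short interval, and correct $v_1$ to some $v_1'\in\mathcal V(t_*)$ by a $\delta$ supported in $[0,t_*]$ with $\|\delta\|\le C\sqrt\eps\,\|v\|$ (the defect is again $-\operatorname{constr}(v_2)\bmod\Pi_0=-\operatorname{constr}(v_1)\bmod\Pi_0\in R_{t_*}$, while $\operatorname{constr}(v_2)=O(\sqrt\eps\,\|v_2\|)$, being an integral over a length-$\eps$ interval). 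The Legendre cross-terms vanish by disjointness of supports, the remaining cross-terms are $O(\sqrt\eps\,\|v_1\|\|v_2\|)$, and combining $Q_T(v_1')\ge c_0\|v_1'\|^2$ with $Q_T(v_2)\ge c'\|v_2\|^2$ (Proposition~\ref{prop:segno-variazione-seconda-e-forma-di-Legendre} on the short interval) gives $Q_T(v)\ge(\min(c_0,c')-C\sqrt\eps)\|v\|^2>0$ for $\eps$ small; hence $Q_T|_{\mathcal V(t_*+\eps)}$ is positive definite, contradicting maximality of $t_*$. Therefore $\mu(t_*)=0$.

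It follows that $Q_T|_{\mathcal V(t_*)}\ge 0$ and that the infimum $\mu(t_*)=0$ is attained at some unit vector $\bar v\in\mathcal V(t_*)$; since $Q_T|_{\mathcal V(t_*)}\ge 0$ and $Q_T(\bar v)=0$, the Cauchy--Schwarz inequality for non-negative bilinear forms forces $Q_T(\bar v,\cdot)\equiv 0$ on $\mathcal V(t_*)$, so $\bar v\in\ker\big(Q_T|_{\mathcal V(t_*)}\big)\setminus\{0\}$ and the form is singular, as claimed. I expect the main obstacle to be precisely the admissibility bookkeeping in the two approximation steps above: truncating or splitting an admissible variation destroys the constraint $v\in\ker D_u E_T$, and it must be restored by a correction whose norm is controlled by the defect. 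What makes this possible is the linear identity ``defect of a truncation $=$ minus the constraint value of the discarded piece $=$ minus the constraint value of the kept piece'', which forces the defect to lie in the already-attainable finite-dimensional subspace $R_t$, together with the fact that a monotone family of finite-dimensional subspaces has only finitely many jumps, so that the right inverses used in the corrections stay uniformly bounded.
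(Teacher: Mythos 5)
Your proposal reconstructs the argument behind Proposition 21.1 of Agrachev--Sachkov, which the paper explicitly cites and then omits: the coercive-plus-compact splitting of $Q_T|_{\mathcal V(t)}$, the monotone minimum $\mu(t)$ pinned to zero at $t_*$ by the truncation and extension approximations, and the extraction of a kernel vector at the minimizer via Cauchy--Schwarz, so you are following essentially the same route as the paper. Two technical points worth making explicit, both consistent with the paper's intent: the step $\mu(t_*)\le 0$ genuinely requires $t_*<T$ (the paper itself only declares $t_*$ a conjugate time when $t_*<T$, so the ad-hoc ``extend the horizon'' device should really be replaced by that hypothesis), and since in the paper's extended variation space $\phi(T)$ is an independent coordinate of $\R\times L^2\times L^2$, the Legendre part alone is not coercive on the ambient space---one must first invoke the admissibility constraint together with $Z_I(T)\notin\Pi_0$ to control $\phi(T)$ as a bounded functional of $(\phi,w)$, and only then read off the coercive-plus-compact structure on $\mathcal V(t)$.
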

The proof of this fact follows very closely the proof of Proposition 21.1 in \cite{AgSa}, so we omit it.
Now, we show how this result can be translated in a more geometric framework and obtain the so called Jacobi equation. 
Since now we are interested only on the dynamics up to time $t_*$, to simplify notations we simply rename $t_*$ to $T$.

To derive the Jacobi equation, we look for admissible variations $\bar v$ in the kernel of $Q_{T}$:
\begin{gather}
    Q_{T}( \cdot, \bar v ) = 0.
\end{gather}
On the other hand, an admissible variation must satisfy~\eqref{eq:admissible_variations_after_integration} or, equivalently,
\begin{gather}
    \sigma 
    \Big(
        \cdot_{\phi(T)}
            \big(
                Z_I(T)
            \big)
        +
        \int_0 ^{T}
        \mathcal{Z}_t \cdot_v
        dt
        ,
        \nu 
    \Big)
    =
    0,
    \quad
    \forall \nu \in \Pi_0.
\end{gather}
Therefore, $\ker D_u E_T$ can be identified with a common kernel of a finite number of linear forms. Since, 
by assumption $Q_T(\cdot , \bar v) = 0$ on $\ker D_u E_T$, it follows that there is $\nu \in \Pi_0$ such that 
\begin{equation}
\label{eq:ker-sec-var-contr-amm}
    Q_{T}( \cdot, \bar v ) 
    =
    \sigma 
    \Big(
        \cdot_{\phi(T)}
            \big(
                Z_I(T)
            \big)
        +
        \int_0 ^{T}
        \mathcal{Z}_t \cdot_v
        dt
        ,
        \nu 
    \Big),
    \quad
    \text{ on } \, T_u \U.
\end{equation}

The equation \eqref{eq:ker-sec-var-contr-amm} splits into two equations:
\begin{gather}
    \label{eq:boundary-cond}
    \sigma 
            \Big( 
                Z_I(T) 
                \,,\,  
                        \int_0 ^{T}
        \mathcal{Z}_t \bar v
        dt
            \Big)
    = 
    \sigma
    \Big(
        Z_I(T)
        \,,\,
        \nu
    \Big)
    \\[10pt]
    \label{eq:int-Jacobi}
    \int_0 ^T 
        l_t( \cdot_v \,, \bar v)
    dt
    +
    \int_0 ^T
            \sigma 
            \left(  
            \mathcal{Z}_t \cdot_v
            \,,\, 
            \int_0 ^t \mathcal{Z}_s \bar v ds
            \right)
    dt
    =
    \int_0 ^T 
    \sigma
    \big(
        \mathcal{Z}_t \, \cdot_v
        ,
        \nu
    \big)
    dt
\end{gather}
We denote
\begin{equation}
    \label{def:eta}
    \e(t) := \nu - \int_0 ^t \mathcal{Z}_t \bar v dt.
\end{equation}
Since we have the equalities for the forms in \eqref{eq:int-Jacobi}, we can pass to equality of the integrands :
\begin{gather}
    l_t( \cdot, \bar v)
    =
    \sigma
    \big(
        \mathcal{Z}_t \, \cdot_v
        ,
        \e(t)
    \big)
    \quad 
    \text{ for a.e. } t\in[0,T]
\end{gather}
Then, since by assumption $l_t(\cdot, \bar v)$ is an invertible linear map, we can write
\begin{equation}
    \bar v (t)
    =
    l_t ^{-1}
    \sigma
        \Big(
            \mathcal{Z}_t \, \cdot_v
            \,,\,
            \e(t)
        \Big)
    \quad 
    \text{ for a.e. } t\in[0,T].
\end{equation}
Finally, using~\eqref{def:eta} we find 
\begin{equation}
    \dot \e(t)
    =
    -\mathcal{Z}_t \bar v (t)
    =
    -\mathcal{Z}_t 
    l_t ^{-1}
    \sigma
        \Big(
            \mathcal{Z}_t
            \cdot
            \,,\,
            \e(t)
        \Big)
    \quad 
    \text{ for a.e. } t\in[0,T].
\end{equation}
which gives us a non-autonomous linear ODE for $\eta$. Therefore, we have derived the \textit{Jacobi equation}. 

Concerning \eqref{eq:boundary-cond}, we have
\begin{equation}
\label{eq:final_condition}
    \sigma 
    \big( 
       \e(T) \,,\,   Z_I(T)
    \big)
    = 
    0.
\end{equation}
It is common to rewrite this condition using skew-orthogonal complement as in~\eqref{eq:lagrangian+isotropic=lagrangian}. 
Using this notation, we can take into account both~\eqref{eq:admissible_variations_after_integration} and~\eqref{eq:final_condition} as 
$$
    \e(T) 
    \in 
    \big(
        \Pi_0 +  \R Z_I(T)
    \big) 
    \cap \big( \R Z_I(T) \big)^\angle = \Pi_0^{\mathbb{R}Z_I(T)}.
$$
This motivates the following definition.

\begin{defn}
\label{def:conjugate_point_initial_condition_on_the_fibre}
A time $T$ is a conjugate time for $0$ if and only if there is a non trivial solution to the following boundary value problem:
\begin{gather}
    \label{eq:Jacobi1}
    \dot \e(t)
    =
    -\mathcal{Z}_t 
    l_t ^{-1}
    \sigma
        \Big(
            \mathcal{Z}_t
            \cdot
            \,,\,
            \e(t)
        \Big)
    \quad 
    \text{ for a.e. } t\in[0,T],
    \\
    \label{eq:Jacobi2}
    \e(0)\in \Pi_0, \, \e(T)\in \Pi_0^{\mathbb{R}Z_I(T)}.
\end{gather}
The corresponding point $\pi(\lambda(t))$ is said to be conjugate to $\pi(\lambda(0))$.
\end{defn}

Recall that previously we have given a slightly different definition of a conjugate point in Definition~\ref{def:conjugate_point_final_condition_on_the_fibre}. But the two definitions are actually equivalent, even though the Definition~\ref{def:conjugate_point_initial_condition_on_the_fibre} is more convenient in practice.

\begin{prop}
Definitions~\ref{def:conjugate_point_final_condition_on_the_fibre} and~\ref{def:conjugate_point_initial_condition_on_the_fibre} are equivalent.
\end{prop}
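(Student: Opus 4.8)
The plan is to construct an explicit linear isomorphism between the space of solutions of the boundary value problem in Definition~\ref{def:conjugate_point_final_condition_on_the_fibre} and that of Definition~\ref{def:conjugate_point_initial_condition_on_the_fibre}, built from the distinguished Jacobi solution $Z_I(t)$ of Lemma~\ref{lem:Z_I(t)_is_a_solution}. Both problems are governed by the same Jacobi equation; they differ only in where the ``twisted'' fibre condition is placed: Definition~\ref{def:conjugate_point_final_condition_on_the_fibre} asks $\e(0)\in\Pi_0^{\R Z_I(0)}$ and $\e(T)\in\Pi_0$, whereas Definition~\ref{def:conjugate_point_initial_condition_on_the_fibre} asks $\e(0)\in\Pi_0$ and $\e(T)\in\Pi_0^{\R Z_I(T)}$. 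The three facts I will use are: (i) $\e_I(t)=Z_I(t)$ is a Jacobi solution; (ii) $Z_I(t)\notin\Pi_0$ for every $t\in[0,T]$, so $\Pi_0\oplus\R Z_I(t)$ is a direct sum and every element of $\Pi_0^{\R Z_I(t)}=(\Pi_0\oplus\R Z_I(t))\cap(\R Z_I(t))^\angle$ has a unique such decomposition; (iii) the Jacobi equation is the Hamiltonian equation of the quadratic Hamiltonian $b_t$ of~\eqref{eq:def-b_t}, so its flow is a one-parameter family of linear symplectomorphisms of $\big(T_{\la_0}(T^*M),\sigma\big)$, and hence $\sigma\big(\e(t),\mu(t)\big)$ is independent of $t$ for any two solutions $\e,\mu$.

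Given a solution $\e$ of the problem in Definition~\ref{def:conjugate_point_final_condition_on_the_fibre}, I would write $\e(0)=\pi_0+c\,Z_I(0)$ with $\pi_0\in\Pi_0$ and $c\in\R$ uniquely determined by (ii), and set $\tilde\e(t):=\e(t)-c\,Z_I(t)$. By (i) and linearity $\tilde\e$ is again a Jacobi solution, with $\tilde\e(0)=\pi_0\in\Pi_0$. At the other endpoint $\tilde\e(T)=\e(T)-c\,Z_I(T)\in\Pi_0\oplus\R Z_I(T)$, and combining the skew-symmetry of $\sigma$ with (iii) and $\e(0)\in(\R Z_I(0))^\angle$ yields
$$
\sigma\big(\tilde\e(T),Z_I(T)\big)=\sigma\big(\e(T),Z_I(T)\big)=\sigma\big(\e(0),Z_I(0)\big)=0,
$$
so $\tilde\e(T)\in\Pi_0^{\R Z_I(T)}$ and $\tilde\e$ solves the problem in Definition~\ref{def:conjugate_point_initial_condition_on_the_fibre}. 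The inverse is the mirror construction: given such a $\tilde\e$, write $\tilde\e(T)=\pi_T+c'\,Z_I(T)$ and put $\e(t):=\tilde\e(t)-c'\,Z_I(t)$; running the same computation from $t=T$ back to $t=0$ shows $\e$ solves the problem in Definition~\ref{def:conjugate_point_final_condition_on_the_fibre}, and the two assignments are visibly mutually inverse. Both being linear, the solution spaces are isomorphic, so the two definitions yield the same conjugate times, with matching multiplicities.

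It then remains to match the wording ``non-constant'' of Definition~\ref{def:conjugate_point_final_condition_on_the_fibre} with ``non-trivial'' of Definition~\ref{def:conjugate_point_initial_condition_on_the_fibre}. The isomorphism above carries constant solutions to constant solutions: if $\e\equiv\e_0$ is constant then $\e_0=\e(T)\in\Pi_0$, so uniqueness of the decomposition forces $c=0$ and $\tilde\e=\e$, and symmetrically for the inverse; hence the isomorphism descends to the quotients by the subspaces of constant solutions, and a non-constant solution exists for one problem iff it does for the other. (In concrete situations such as the $\mathrm{SU}(2)$ example of Subsection~\ref{sec:SU(2)} one moreover finds these subspaces trivial, so that the two phrasings coincide verbatim.)

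The one delicate point is the endpoint bookkeeping in the second step: one must verify that the twisted fibre condition genuinely transfers to the opposite endpoint, and this is precisely where the symplectic invariance of the Jacobi flow — the identity $\sigma(\e(0),Z_I(0))=\sigma(\e(T),Z_I(T))$ — is indispensable. Everything else is routine linear symplectic algebra in $T_{\la_0}(T^*M)$, using only Lemma~\ref{lem:Z_I(t)_is_a_solution}, the transversality $Z_I(t)\notin\Pi_0$, and the Hamiltonian nature of the Jacobi equation.
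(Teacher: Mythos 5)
Your proof is correct and uses precisely the same three ingredients as the paper's — Lemma~\ref{lem:Z_I(t)_is_a_solution} gives the distinguished Jacobi solution $Z_I(t)$, the fact that $Z_I(t)\notin\Pi_0$ makes the decomposition at the endpoints unique, and symplectic invariance of the Jacobi flow moves the condition $\sigma(\e,Z_I)=0$ from one endpoint to the other. What you add is an explicit linear isomorphism $\e \mapsto \e - c\,Z_I(\cdot)$ between the two solution spaces, which is exactly the right way to make precise what the paper only sketches: the paper's opening sentence of the proof ("solutions satisfy boundary conditions~\eqref{eq:Jacobi2} if and only if they satisfy~\eqref{eq:Jacobi2-rev}") cannot be taken literally — a given solution may satisfy one pair of boundary conditions and not the other — so the honest statement is the isomorphism between solution spaces that you construct, and your version also delivers the matching of multiplicities for free.

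One small caution concerning the wording you flagged. Your observation that the isomorphism carries constant solutions to constant solutions, and therefore descends to the quotients by constant solutions, establishes that a \emph{non-constant} solution exists for one boundary value problem iff one exists for the other. Definition~\ref{def:conjugate_point_initial_condition_on_the_fibre}, however, literally asks for a \emph{non-trivial} solution, which is a weaker requirement; in the degenerate case where the solution space is nonzero but consists entirely of constant solutions, the two phrasings would part ways. This is an imprecision in the statements of the two definitions rather than a flaw in your argument, and your resolution (the equivalence of the quotients) is the natural one — you may just want to say explicitly that you read "non-trivial" as "non-constant" to make the equivalence clean, rather than leaning on the parenthetical claim about the $\mathrm{SU}(2)$ example, which is a somewhat delicate computation in its own right.
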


\begin{proof}

We show that solutions of Equation \eqref{eq:Jacobi1} satisfy boundary conditions \eqref{eq:Jacobi2} if and only if they satisfy also the boundary conditions \eqref{eq:Jacobi2-rev}. 
By Lemma~\eqref{lem:Z_I(t)_is_a_solution} $\eta(t)=Z_I(t)$ is a solution of the Jacobi equation. However, $Z_I(0)\notin \Pi_0$ and $Z_I(T) \notin \Pi_0$. Therefore, this solution never satisfies \eqref{eq:Jacobi2} or~
\eqref{eq:Jacobi2-rev}. 
So, if $T$ is conjugate with $0$ there must be some vector $\nu\in \Pi_0$ such that the corresponding solution to \eqref{eq:Jacobi1} with $\e(0)=\nu$ is such that $\e(T)\in \Pi_0 \cap Z_I(T)^\angle$.
Hence, we must have $\sigma(\e(T),Z_I(T))=0$. 
Since the Equation \eqref{eq:Jacobi1} is hamiltonian, 
we will have that if and only if $\sigma(\e(0),Z_I(0))=0$.  
\end{proof}

Finally, we can prove Theorem \ref{thm:nec-cond-conj-points}.
\begin{proof}[Proof of Theorem \ref{thm:nec-cond-conj-points}]
    Let $\la$ be a singular extremal for Problem \ref{prob:OCP} corresponding to the singular control $u$ and let $q$ be its trajectory on $M$. 
    If the conjugate points along $q$ are isolated, the previous argument shows that the negative Morse index of $Q_T$ is equal to the number of conjugate points, counted with multiplicity. 
    Let $N$ be the sum of these multiplicity.
    By Theorem \ref{thm:nec-cond-loc-open}, if $q$ is of corank $r$, we have that if $N \geq r$, then $E_T$ is locally open at $u$. 
    In particular, $u$ is not a solution of the optimal control problem. 
\end{proof}
    
\end{section}

\appendix

\section{Proof of formula (\ref{eq:sec-var-desing})}
    \label{app:contazzi-var-sec}
    Here we are going to show how to obtain \eqref{eq:sec-var-desing} from \eqref{eq:sec-var-sing}. We have just to integrate by part $\rho$ in every integral where it appears. 

    For the first integral \eqref{int1}, we have 
    \begin{gather}
            \int_0 ^{T}
                \rho_1(t)  \sigma 
                \left(
                    Z_I (t)
                    , 
                    \int _0 ^t
                        \rho_2(s) Z_I (s)
                    ds
                \right)
            dt
            =
            \\[10pt]
            \overset{\text{int.by parts}}{=}
            \int_0 ^{T}
                \rho_1(t)  \sigma 
                \left(
                    Z_I (t)
                    , 
                    \phi_2(t) Z_I (t)
                    -
                    \int _0 ^t
                        \phi_2(s) 
                        \dot Z_I (s)
                    ds
                \right)
            dt
            =
            \\[10pt]
            =
            -
            \int_0 ^{T}
                \rho_1(t)  \sigma 
                \left(
                    Z_I (t) 
                    ,
                    \int _0 ^t
                        \phi_2(s) 
                        \dot Z_I (s)
                    ds
                \right)
            dt
            \overset{\text{Fubini}}{=}  
            -
            \int_0 ^{T}
                \left(
                \int _s ^{T}         
                    \rho_1(t)  \sigma 
                    \left(
                        Z_I (t) 
                        ,
                            \phi_2(s) 
                            \dot Z_I (s)
                    \right)
                dt
                \right)
            ds
            \\[10pt]
            =
            -
            \int_0 ^{T}     
                \phi_2(s)
                \sigma 
                \left(
                    \int _s ^{T}
                        \rho_1(t) Z_I (t) 
                    dt
                    ,  
                    \dot Z_I (s)
                \right)
            ds
            =
            \\[10pt]
            \overset{\text{int.by parts}}{=}
            -\int_0 ^{T}     
                \phi_2(s)
                \sigma 
                \left(
                    \phi_1(T) Z_I (T) 
                    -
                    \phi_1(s) Z_I (s)
                    -
                    \int _s ^{T}
                        \phi_1(t)
                        \dot Z_I (t)
                    dt
                    \,,\,  
                    \dot Z_I (s)
                \right)
            ds
            =
            \\[10pt]
            =
            -
            \sigma 
            \left(
                    \phi_1(T) Z_I (T) 
                    , 
                    \int_0 ^{T}     
                    \phi_2(s)
                    \dot Z_I (s)
                    ds
            \right)
            +
            \\
            +
            \int_0 ^{T}     
                \phi_1(s) \phi_2(s)
                \sigma 
                \left(
                    Z_I (s)
                    ,  
                    \dot Z_I (s)
                \right)
            ds
            +
            \int_0 ^{T}
                \sigma 
                \left(
                    \phi_1(t)
                    \dot Z_I (t)
                    ,  
                    \int _0 ^{t}     
                        \phi_2(s)
                        \dot Z_I (s)
                    ds
                \right)
            dt
            .
    \end{gather}
    A similar computation for \eqref{int2} yields
    \begin{gather}
        \int_0 ^{T} 
        \sigma
        \left(
                \rho_1(t) Z_I (t)
                ,
                \int_0 ^t Z_s w_2 (s) ds
        \right)    
        dt
        \overset{\text{Fubini}}{=}
        \int_0 ^{T} 
        \sigma
        \left(
                \int_s ^{T}
                    \rho_1(t) Z_I (t)
                dt
                ,
                Z_s w_2 (s)
        \right)    
        ds
        \\[10pt]
        =
        \int_0 ^{T}
            \phi_1(T)
            \sigma
            \left( 
                Z_I(T) , Z_s w_2 (s)
            \right)
        ds
        -
        \int_0 ^{T}
            \phi_1(s)
            \sigma
            \left( 
                Z_I(s) , Z_s w_2 (s)
            \right)
        ds
        -
        \\
        -
        \int_0 ^{T}
            \sigma
            \left( 
                \int_s ^{T}
                    \phi_1(t) \dot Z_I(t) 
                dt
                , 
                Z_s w_2 (s) 
            \right)
        ds
        =
        \\[10pt]
        =
        \sigma
            \left( 
                \phi_1(T) Z_I(T) 
                , 
                \int_0 ^{T} Z_s w_2 (s) ds
            \right)
        +
        \int_0 ^{T}
            \phi_1(s)
            \sigma
            \left( 
                 Z_s w_2 (s) ,  Z_I(s)
            \right)
        ds
        -
        \\
        -
        \int_0 ^{T}
            \sigma
            \left( 
                    \phi_1(t) 
                    \dot Z_I (t) 
                , 
                \int_0 ^t Z_s w_2 (s) ds
            \right)
        dt
    \end{gather}
    
    And similarly for \eqref{int3}
    
    \begin{gather}
        \int_0 ^{T} 
        \sigma
        \left(
                Z_t w_1 (t)
                ,
                \int_0 ^t
                \rho_2(s)
                Z_I (s)
                ds
        \right)    
        dt
        =
        \\
        =
        \int_0 ^{T}
            \phi_2(t)
            \sigma
            \left( 
                Z_t w_1 (t) , Z_I (t)
            \right)
        dt
        -
        \int_0 ^{T}
            \sigma
            \left( 
                  Z_t w_1 (t) ,  \int_0 ^t \phi_2(s) \dot Z_I (s) ds
            \right)
        dt
    \end{gather}
    So, in the end, we can write $ Q_T(v_1,v_2) = Q_1(v_1,v_2) + Q_2(v_1,v_2) + Q_3(v_1,v_2) $, where
    \begin{align}
        \label{eq:sec-var-trasf-1}
        Q_1(v_1,v_2) &= 
        \int_0 ^{T}     
        \Bigg(
        \frac{\big\langle  w_1(t),w_2(t) \big\rangle^2}{r(t)}
        +
                \phi_1(t) \phi_2(t)
                \,
                \sigma 
                \left(
                    Z_I (t)
                    ,  
                    \dot Z_I (t)
                \right)
        +
        \\
        &+
            \phi_1(s)
            \sigma
            \left( 
                 Z_ t w_2 (t) ,  Z_I(t)
            \right)
        +
            \phi_2(t)
            \sigma
            \left( 
                Z_t w_1 (t) , Z_I (t)
            \right)
        \Bigg)
        dt,
        \\[10pt]
        Q_2(v_1,v_2) &= 
        \int_0 ^{T}
        \Bigg(
                \sigma 
                \left(
                    \phi_1(t)
                    \dot Z_I (t)
                    ,  
                    \int _0 ^{t}     
                        \phi_2(s)
                        \dot Z_I (s)
                    ds
                \right)
            -
                \sigma_{\la_0}
                \left( 
                        \phi_1(t) 
                        \dot Z_I (t) 
                    , 
                    \int_0 ^t Z_s w_2 (s) ds
                \right)
            -
            \\
            &-
                \sigma
                \left( 
                      Z_t {w_1}(t) ,  \int_0 ^t \phi_2(s) \dot Z_I (s) ds
                \right)
            \Bigg)
            dt,
        \\[10pt]
        Q_3(v_1,v_2) &= 
            \sigma 
            \left(
                    \phi_1(T) Z_I (T) 
                    , 
                    \int_0 ^{T}     
                    (-\phi_2(s)
                    \dot Z_I (s) + Z_s w_2 (s))
                    ds
            \right)
    \end{align}
    One can recognize in $Q_1$ the principal part of formula \eqref{eq:sec-var-desing}.

\section{Proof of Proposition~\ref{prop:equivalence_jacobi_equations}}

\label{app:tranf-jac-eq}
	First, we derive a general formula for the left hand side of \eqref{eq:thesis-equiv}:
	\begin{equation}
	    H_S(t,\ell) - H(u(t), \ell)
	    =
	    h_S(\ell) - h_0 (\ell)
	    +
   	    r(t)\big(
	    |h_I(\ell)|-\langle 
		 h_I (t)
		,
		h_I(\ell)
	    \rangle\big).
	\end{equation}

	The first derivative is 
	\begin{equation}
	    d_\ell 
	    \big(
	        H_S( t,\cdot ) - H( u(t), \cdot )
	    \big)
	    =
	    d_\ell 
	    \big(
	         h_S - h_0 
	    \big)
	    +
   	    r(t)
	    \left(
    	    \frac{1}{|h_I(l)|}
    	    \langle 
        		h_I(l)
        		,
        		d_\ell h_I
    	    \rangle
    	    -
    	    \langle 
        		h_I(t) 
        		,
        		d_\ell h_I
    	    \rangle
	    \right)
	    ,
	\end{equation} 
	and the second derivative at $\la_0$ is 
	\begin{multline}
	    d_{\la_0} ^2
	    \left[
                \big( H_S(t,\cdot) - H(u(t), \cdot) \big)
                \circ
                \Phi_t
            \right]
	    [\e_1, \e_2]
	    =
	    d_{\la(t)} ^2
	    \big(
	        H_S( t,\cdot ) - H( u(t), \cdot )
	    \big)
	    [ \xi_1, \xi_2]
	    =
	    \\[10pt]
            \label{eq:hess-ham-pullb}
	    =
            d ^2
	    \big(
	         h_S - h_0 
	    \big)
	    [ \xi_1, \xi_2]
            +
	    r(t)
	    \Big(
	    	\big\langle 
                    d h_I [\xi_1] 
                    \,,\, 
                    d h_I [\xi_2] 
                \big\rangle 
    		-
    		\big\langle 
                    h_I , d h_I [\xi_1] 
                \big\rangle 
    		\big\langle 
                    h_I , d h_I [\xi_2] 
                \big\rangle 
	    \Big),
	\end{multline}
    where $\xi_i = (\Phi_t)_* \e_i$, $i=1,2$ and every differential and function in the last line is evaluated at $\la(t)$. Here we have used the linearity of $h_I(l)$ and that $\lambda_t \in \Sigma$.  
    Now, to compute explicitly both sides of \eqref{eq:thesis-equiv}, for every fixed $t\in[0,T]$, we consider the splitting
    \begin{equation}
        T_{\la_0}(T^*M)
        =
        (\Phi_t ^{-1})_*
        \big(
        \R \vec h_c(\la(t)) \oplus T_{\la(t)} \mathcal{S}
        \big)
        =
        \R Z_I(t) \oplus
        (\Phi_t ^{-1})_*
        \big(
            T_{\la(t)} \mathcal{S}
        \big)
        .
    \end{equation}
    So, by bilinearity, it sufficies to check \eqref{eq:thesis-equiv} for 
    \begin{equation}
        (\e_1,\e_2)
        \in
        \Big\{
            \big(
                Z_I(t) , Z_I(t) 
            \big)
            ,
            \big(
                Z_I(t) , (\Phi_t ^{-1})_* X
            \big)
            ,
            \big(
                (\Phi_t ^{-1})_* X , (\Phi_t ^{-1})_* X
            \big)
        \Big\},
    \end{equation}
    where $ X \in T_{\la(t)}\mathcal{S}$. 
    Thanks to this choice, the l.h.s. of \eqref{eq:thesis-equiv} will be avaluated at $\xi_i=(\Phi_t)_* \e_i \in \{ \vh_c(\la(t)) , X \}$, simplifying the two differentials. 
    We compute the right and the left hand-side for each case separately. 
    First, consider the case 
    $
        (\e_1,\e_2) 
        =
        \big(
            Z_I(t) , Z_I(t) 
        \big)
    $. 
    We start by plugging it into the right hand side of~\eqref{eq:thesis-equiv}. 
    First, we have
    \begin{equation}
        \sigma_{\la_0}
        \big(
            \mathcal{Z}_t \cdot , Z_I(t) 
        \big)
        =
        \sigma_{\la_0}
        \big(
            Z_t \cdot_w , Z_I(t) 
        \big)
        -
        \cdot_\phi
        \sigma_{\la_0}
        \big(
            \dot Z_I(t) , Z_I(t) 
        \big)
    \end{equation}
    Comparing this with the matrix expression for $l_t$ (see \eqref{eq:def-lt}) we find 
    \begin{equation}
        \sigma_{\la_0}
        \big(
            \mathcal{Z}_t \cdot , Z_I(t)
        \big)
        =
        l_t
        \begin{pmatrix}
            0 \\ 1
        \end{pmatrix}.
    \end{equation}
    Hence 
    \begin{equation}
    \label{eq:equiv-hJac1}
        \left \langle
            \sigma_{\la_0}
            \big(
                \mathcal{Z}_t \cdot , Z_I(t) 
            \big)
            ,
            l_t ^{-1}
            \sigma_{\la_0}
            \big(
                \mathcal{Z}_t \cdot , Z_I(t) 
            \big)
        \right \rangle
        =
        -
        \sigma_{\la_0}
            \big(
                \dot Z_I(t) , Z_I(t) 
            \big).
    \end{equation}
    Now, let us compute the left-hand side. Since by construction $h_S$ is constant along integral curves of $\vec h_c$, we get
    \begin{equation}
        d ^2_{\la(t)}
        \big(
	   h_S - h_0 
	\big)
	[ (\Phi_t)_* Z_I(t),(\Phi_t)_* Z_I(t)]
        = 
        -d ^2_{\la(t)} h_0 [ \vh_c(\lambda_t) , \vh_c(\lambda_t) ] 
        =
        -h_{cc0}(t)
        = 
        h_{c0c}(t),
    \end{equation}
    So, substituting $(\e_1,\e_2)=(Z_I(t),Z_I(t))$ into \eqref{eq:hess-ham-pullb}, we obtain
    \begin{gather}
    \label{eq:equiv-hPullB1}
        d ^2 _{\la(t)}
	    \big(
	        H_S( t,\cdot ) - H( u(t), \cdot )
	    \big)
	    [\vh_c]^2
        =
        h_{c0c}(t)
        +
        r(t)
        \left(
            \big |
                d_{\la(t)} h_I [\vh_c]
            \big |^2
            -
            \big \langle
                h_I (t)
                ,
                d_{\la(t)} h_I [\vh_c]
            \big \rangle^2
        \right),
    \end{gather}
    where every function and vector field is evaluated at $\la(t)$.
    Notice that 
    \begin{equation}
        \label{eq:small_property}
        \big \langle
                h_I (t)
                ,
                d_{\la(t)} h_I [\vh_c]
        \big \rangle
        =
        d_{\la(t)} h_c [\vec h_c]
        =
        0.
    \end{equation}
    Since $d h_I [\vh_c] = h_{cI}$, by applying \eqref{eq:comp-form1} we get
    \begin{equation}
         d ^2_{\la(t)}
	    \big(
	        H_S( t,\cdot ) - H( u(t), \cdot )
	    \big)
	    [\vh_c]^2
        = 
        h_{c0c}(t)
        +
        r(t)
        \left(
            \big |
                d_{\la(t)} h_I [\vh_c]
            \big |^2
        \right)
        = \sigma_{\la_0}
            \big(
                 Z_I(t) , \dot Z_I(t) 
            \big)
       ,
    \end{equation}
    and the equality between \eqref{eq:equiv-hJac1} and \eqref{eq:equiv-hPullB1} follows.
    \par
    We consider now the case $(\e_1,\e_2)=(Z_I(t) , (\Phi_t ^{-1})_* X).$
    We have that
    \begin{align}
    \label{eq:sec-case}
        \left \langle
            \sigma_{\la_0}
            \big(
                \mathcal{Z}_t \cdot , (\Phi_t ^{-1})_* X 
            \big)
            ,
            l_t ^{-1}
            \sigma_{\la_0}
            \big(
                \mathcal{Z}_t \cdot , Z_I(t)
            \big)
        \right \rangle
        &=
        \left \langle
            \sigma_{\la_0}
            \big(
                \mathcal{Z}_t \cdot , (\Phi_t ^{-1})_* X 
            \big)
            ,
            \begin{pmatrix}
                0 \\ 1
            \end{pmatrix}
        \right \rangle
        =
        -\sigma_{\la_0}
            \big(
                \dot Z_I(t) , (\Phi_t ^{-1})_* X 
            \big)
        =
        \\[5pt]
        &=
        \sigma_{\la_0}
        \left(
            (\Phi_t ^{-1})_* X 
            \:,\: 
            (\Phi_t ^{-1})_* [\vh_{0c} + r(t) \sum_{i,j=1} ^m h_i(t) h_{ij }(t)\vh_j]
        \right)
        =
        \\
        &=
        r(t)
        \sum_{i,j=1} ^m
            h_i(t) h_{ij}(t) \, d_{\la(t)} h_j [X]
        =
        \\[5pt]
        &=
        r(t)
        \langle 
            d_{\la(t)} h_I [ \vec h_c]
            ,
            d_{\la(t)} h_I [ X]
        \rangle,
    \end{align}
    where we used the simplification $\sigma_{\la(t)} (X,\vec h_{0c}) = \langle d_{\la(t)} h_{0c} , X \rangle = 0$, since $X\in T_{\la(t)} \mathcal S = \ker d_{\la(t)} h_{0c}$. 
    
    For the left-hand side of \eqref{eq:hess-ham-pullb} we note that, since $h_S(l) = h_0(l)$ for all $l\in \mathcal{S}$, we have
    \begin{equation}
        d^2_{\lambda(t)}
        \big(
    	   h_S - h_0 
    	\big)
    	[ X, X ]
            =
            0.
    \end{equation}
    Also, we have that 
    \begin{equation}
         d ^2_{\la(t)}  \big(
    	   h_S - h_0 
    	\big)
    	[ \vh_c(\la(t)), X ]
            =
            0.
    \end{equation}
    This follows because $h_S$, by its definition, is constant along the flow of $\vec h_c$, hence 
    $$
    d ^2_{\la(t)} h_S [ \vh_c(\la(t)), X ] = d_{\la(t)}(d_{\la(t)} h_S[\vec h_c] )[X]=0.
    $$ 
    On the other hand, we have $d ^2_{\la(t)} h_0 [ \vh_c(\la(t)), X ] = d_{\la(t)} h_{0c}[X]=0$.
    Moreover, it holds that
    \begin{equation}
        \langle h_I(t) , d_{\la(t)}h_I [\vec h_c] \rangle = \{h_c,h_c\}(t)=0.
    \end{equation}
    Hence, we have 
    \begin{align}
        d_{\la(t)} ^2
	\big(
	    H_S( t,\cdot ) - H( u(t), \cdot )
	\big)
	[\vh_c(\la(t)), X]
        &=
        r(t)
        \langle 
            d_{\la(t)} h_I [\vec h_c]
            ,
            d_{\la(t)} h_I [ X ]
        \rangle
        \\
        &=
        \left \langle
            \sigma_{\la_0}
            \big(
                \mathcal{Z}_t \cdot , (\Phi_t ^{-1})_* X 
            \big)
            ,
            l_t ^{-1}
            \sigma_{\la_0}
            \big(
                \mathcal{Z}_t \cdot , Z_I(t) 
            \big)
        \right \rangle
        .
    \end{align}
    So, the equality between l.h.s. and r.h.s. of \eqref{eq:thesis-equiv} in the case $(\e_1,\e_2)=(Z_I(t) , (\Phi_t ^{-1})_* X)$ follows.
    
    It remains the case $\e_1 = \e_2 = (\Phi_t ^{-1})_* X \in (\Phi_t ^{-1})_* (T_{\la(t)} \mathcal{S})$. 
    \begin{claim}
        It holds that
        \begin{equation}
            \sigma_{\la_0} 
            \big(
                \mathcal{Z}_t \cdot , (\Phi_t ^{-1})_* X
            \big)
            =
            l_t
            \begin{pmatrix}
                w(t) \\ 0
            \end{pmatrix}
            ,
        \end{equation}
        where $w(t) = r(t) d_{\la(t)} h_I [X]$.
    \end{claim}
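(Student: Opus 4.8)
The claim is an identity between two linear functionals on the space of admissible variations $(w',\phi')\in (h_I(t))^\perp\oplus\R$, and the plan is to establish it by matching the two ``rows'' of the matrix $l_t$ from \eqref{eq:def-lt} against the two pieces of $\mathcal Z_t$. Writing a test variation as $(w',\phi')$ and using $\mathcal Z_t(w',\phi')=Z_tw'-\dot Z_I(t)\phi'$, the left-hand side splits as $\sigma_{\la_0}(Z_tw',(\Phi_t^{-1})_*X)-\phi'\,\sigma_{\la_0}(\dot Z_I(t),(\Phi_t^{-1})_*X)$, so I would prove separately that the $w'$-coefficient reproduces the first block-row of $l_t(w(t),0)$ and that the $\phi'$-coefficient reproduces its last entry.

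For the $w'$-term I would use that $Z_tw'=\big[(\Phi_t^{-1})_*\langle w',\vh_I\rangle\big](\la_0)$ by the definition \eqref{eq:def-Zt} and that $\Phi_t$ is a symplectomorphism; this gives $\sigma_{\la_0}(Z_tw',(\Phi_t^{-1})_*X)=\sigma_{\la(t)}(\langle w',\vh_I\rangle,X)=\pm\langle w',d_{\la(t)}h_I[X]\rangle$, with the sign fixed by the convention $dh=\sigma(\cdot,\vh)$. Since $w(t)=r(t)\,d_{\la(t)}h_I[X]$ and the top-left block of $l_t$ is $\I/r$, this is exactly the $w'$-pairing of $l_t(w(t),0)$; here I would also note that, because $Z_t h_I(t)=Z_I(t)$ and hence $\sigma(Z_I(t),Z_I(t))=0$, the possible $h_I(t)$-component of $w(t)$ is invisible in this pairing, so there is no ambiguity in how $l_t(w(t),0)$ is read as a functional on $(h_I(t))^\perp\oplus\R$.

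For the $\phi'$-term I would invoke the explicit formula for $\dot Z_I(t)$ obtained inside the proof of Proposition~\ref{prop:equivalence_legendre}, namely $\dot Z_I(t)=(\Phi_t^{-1})_*\big(\vh_{0c}+r(t)\sum_{i,j}h_i(\la(t))h_{ij}(\la(t))\vh_j\big)$. Pushing the symplectic form back through $\Phi_t^{-1}$, $-\sigma_{\la_0}(\dot Z_I(t),(\Phi_t^{-1})_*X)=-\sigma_{\la(t)}\big(\vh_{0c}+r\sum h_ih_{ij}\vh_j,\,X\big)$; the essential point is that $X\in T_{\la(t)}\mathcal S=\ker d_{\la(t)}h_{0c}$, which annihilates the $\vh_{0c}$-term, leaving $\pm r(t)\sum_{i,j}h_ih_{ij}\,d_{\la(t)}h_j[X]$. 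Using $\sum_i h_ih_{ij}=\{h_j,h_c\}$ and again the definition of $Z_I(t)$, this equals $\sigma_{\la_0}(Z_tw(t),Z_I(t))$, which is precisely the last entry of $l_t(w(t),0)$ from \eqref{eq:def-lt}. Combining the two pieces yields the claimed identity.

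The computation itself is routine; the one place that needs care is the bookkeeping of Poisson-bracket signs and of the decomposition along $h_I(t)$ versus $(h_I(t))^\perp$ — in particular keeping track of which version of $l_t$ (the $\R^{m+1}$ one or its restriction to $(h_I(t))^\perp\oplus\R$) is in play, and using $\sigma(Z_I(t),Z_I(t))=0$ to see that the two give the same functional. With the claim in hand, the remaining case $\e_1=\e_2=(\Phi_t^{-1})_*X$ of Proposition~\ref{prop:equivalence_jacobi_equations} follows: the right-hand side of \eqref{eq:thesis-equiv} becomes $\langle l_t(w(t),0),(w(t),0)\rangle$, which collapses, after projecting onto $(h_I(t))^\perp$, to $r(t)\big(|d_{\la(t)}h_I[X]|^2-\langle h_I,d_{\la(t)}h_I[X]\rangle^2\big)$; this matches the left-hand side of \eqref{eq:thesis-equiv} computed from \eqref{eq:hess-ham-pullb}, since $d^2_{\la(t)}(h_S-h_0)[X,X]=0$ (as $h_S=h_0$ on $\mathcal S$ and $d_{\la(t)}(h_S-h_0)=0$) and $\langle h_I,d_{\la(t)}h_I[X]\rangle=X(h_c)$.
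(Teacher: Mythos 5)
Your proposal is correct and follows essentially the same route as the paper: split the functional into its $w'$- and $\phi'$-components, match the first against the $\I/r$ block via $\sigma_{\la_0}(Z_t w',(\Phi_t^{-1})_*X)=\langle w',d_{\la(t)}h_I[X]\rangle$ using that $\Phi_t$ is symplectic, and match the second against the off-diagonal entry by substituting the explicit formula for $\dot Z_I(t)$ and killing the $\vh_{0c}$ term via $X\in\ker d_{\la(t)}h_{0c}$. One small slip in your write-up: with $h_c=\tfrac12\sum_i h_i^2$ you have $\sum_i h_i h_{ij}=\{h_c,h_j\}$, not $\{h_j,h_c\}$ — a sign you have elsewhere acknowledged needs care, and which cancels in the final comparison because it appears symmetrically on both sides.
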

    Indeed, we have that 
    \begin{equation}
        l_t
        \begin{pmatrix}
            w(t) \\ 0
        \end{pmatrix}
        =
        \begin{pmatrix}
            \frac{ \langle \cdot , w(t) \rangle }{r(t)}
            \\
            \sigma_{\la_0} 
            \big(
                Z_t w(t) , Z_I(t)
            \big)
        \end{pmatrix}
        .
    \end{equation}
    So, in order to prove our claim, we have to check that the following two equalities hold true:
    \begin{align}
    \label{eq:claim-part-1}
        \sigma_{\la_0} 
            \big(
                Z_t \,\cdot\, , (\Phi_t ^{-1})_* X
            \big)
        &=
        \langle
            \,\cdot\,
            ,
            d_{\la(t)} h_I [X]
        \rangle,
        \\[10pt]
        \label{eq:claim-part-2}
        -
        \sigma_{\la_0} 
        \big(
            \dot Z_I (t)  , (\Phi_t ^{-1})_* X
        \big)
        &=
        \sigma_{\la_0} 
            \Big(
                Z_t 
                \big[
                    r(t) d_{\la(t)} h_I [X]
                \big]
                , 
                Z_I(t)
            \Big).
    \end{align}     
    Concerning the first one, we have that, for any $y \in \R^m$:
    \begin{align}
        \sigma_{\la_0}
        \left(
            Z_t y , (\Phi_t ^{-1})_* X
        \right)
        =
        \sum_{j}
        y_j
        \sigma_{\la_0}
        \left(
            (\Phi_t ^{-1})_* \vh_j , (\Phi_t ^{-1})_* X
        \right)
        =
        \sum_{j}
        y_j
        d_{\la(t)} h_j [ X ]
        =
        \langle 
            y
            ,
            d_{\la(t)}h_I [ X ]
        \rangle.
    \end{align}
    So, \eqref{eq:claim-part-1} is proved. For the second one, we have already seen (see \eqref{eq:sec-case}) that the l.h.s. is
    \begin{equation}
        \sigma_{\la_0}
        \big(
            (\Phi_t ^{-1})_* X
            ,
            \dot Z_I(t) 
        \big)
        =
        r(t)
        \big\langle 
            d_{\la(t)} h_I [\vec h_c]
            \,,\,
            d_{\la(t)} h_I [X]
        \big\rangle.
    \end{equation}
    While the r.h.s. is 
    \begin{align}
        \sigma_{\la_0} 
            \Big(
                Z_t 
                \big[
                    r(t) d_{\la(t)} h_I [& X]
                \big]
                , 
                Z_I(t)
            \Big)
        =
        \\
        &=
        r(t)
        \sigma_{\la_0} 
            \Big(
            \sum_{i=1} ^m
                d_{\la(t)} h_i [ X ]
                (\Phi_t ^{-1})_* \vh_i
                , 
            \sum_{j=1} ^m
                h_j(t) (\Phi_t ^{-1})_* \vh_j
            \Big)
        =
        \\
        &=
        r(t)
        \sum_{i,j=1} ^m
            d_{\la(t)} h_i [ X ]
            h_j(t) h_{ij}(t)
        =
        \\
        &=
        r(t)
        \big\langle 
            d_{\la(t)} h_I [ \vec h_c ]
            \,,\,
            d_{\la(t)} h_I [ X]
        \big\rangle.
    \end{align}
    So, also \eqref{eq:claim-part-2} follows and the claim is proved.
    
    Now, we are ready to prove \eqref{eq:thesis-equiv} for $\e_1=\e_2=(\Phi_t ^{-1})_* X \in (\Phi_t ^{-1})_* (T_{\la(t)} \mathcal{S})$. 
    As a consequence of the Claim that we have just proved, the r.h.s. of \eqref{eq:thesis-equiv} reduces to 
    \begin{equation}
        \Big\langle
            \sigma_{\la_0}
            \big (
                \mathcal{Z}_t \cdot , (\Phi_t ^{-1})_* X
            \big)
            ,
            l_t ^{-1}
            \sigma_{\la_0}
            \big (
                \mathcal{Z}_t \cdot , (\Phi_t ^{-1})_* X
            \big)
        \Big\rangle
        =
        r(t)
        \sigma_{\la_0}
            \Big (
                Z_t \big[ d_{\la(t)} h_I [X] \big] , (\Phi_t ^{-1})_* X
            \Big),
    \end{equation}
    and a similar computation to the one above shows that (we write $w(t) = r(t) d_{\la(t)} h_I [X]$ for brevity)
    \begin{align}
        \sigma_{\la_0}
            \Big (
                Z_t w(t) , (\Phi_t ^{-1})_* X
            \Big)
        &=
        \sum_{j=1}^m w_j  (t)
        \sigma_{\la_0}
            \Big (
                (\Phi_t^{-1})_* \vec h_j , (\Phi_t ^{-1})_* X
            \Big)
        =
        \\
        &=
        \sum_{j=1}^m w_j(t)
        \langle d_{\la(t)} h_j , X \rangle
        =
        \langle w(t) , d_{\la(t)} h_I [X] \rangle 
        =
        \\
        &=
        r(t)
        \big|
            d_{\la(t)} h_I [X]
        \big|^2.
    \end{align}
    Now, the l.h.s. of \eqref{eq:thesis-equiv} is
    \begin{equation}
        d ^2
	    \big(
	        H_S( t,\cdot ) - H( u(t), \cdot )
	    \big)
	    [ X, X]    
        =
        r(t)
	    \Big(
	    	\big| 
                    d_{\la(t)} h_I [ X] 
                \big|^2
    		-
    		\big\langle
                    h_I , d_{\la(t)} h_I  [ X]
            \big\rangle^2
	    \Big),
    \end{equation}
    Since $d_{\la(t)} h_I  [ X]$ must be an admissible variation $w(t)$ orthogonal to $h_I(t)$, the term 
    $
    \big\langle
        h_I(t) , d_{\la(t)} h_I  [X]
    \big\rangle
    $
    in the previous expression vanishes. Thus, the desired equality follows.

\printbibliography

\end{document}